\newtheorem{theorem}{Theorem}[section] 
\newtheorem{corollary}[theorem]{Corollary} 
\newtheorem{lemma}[theorem]{Lemma} 
\newtheorem{proposition}[theorem]{Proposition}
\theoremstyle{definition}
\theoremstyle{remark}
\newcommand{\ord}{\operatorname{ord}}
\newcommand{\Aut}{\operatorname{Aut}}
\renewcommand{\deg}{{\operatorname{deg}}}
\newcommand{\GL}{\operatorname{GL}}
\newcommand{\Res}{\operatorname{Res}}
\newcommand{\Adj}{\operatorname{Adj}}
\newcommand{\diam}{\operatorname{diam}}
\newcommand{\MinRes}{\operatorname{MinResLoc}}
\newcommand{\Aff}{\operatorname{Aff}}
\newcommand{\new}{\operatorname{new}}
\def\ord{{\mathop{\rm ord}}}
\def\ordRes{{\mathop{\rm ordRes}}}
\def\deg{{\mathop{\rm deg}}}
\def\Aut{{\mathop{\rm Aut}}}
\def\GL{{\mathop{\rm GL}}}
\def\Berk{{\mathop{\rm Berk}}}
\def\Res{{\mathop{\rm Res}}}
\def\Fix{{\mathop{\rm Fix}}}
\def\Repel{{\mathop{\rm Repel}}}
\def\vv{{\vec{v}}}
\def\v1{{\vec{1}}}
\def\vbb1{\vec{{\mathbf 1}}}
\def\BF1{{\mathbf 1}}
\def\CC{{\mathbb C}}
\def\FF{{\mathbb F}}
\def\HH{{\mathbb H}}
\def\PP{{\mathbb P}}
\def\QQ{{\mathbb Q}}
\def\RR{{\mathbb R}}
\def\ZZ{{\mathbb Z}}
\def\cF{{\mathcal F}}
\def\cO{{\mathcal O}}
\def\abar{{\overline{a}}}
\def\bbar{{\overline{b}}}
\def\cbar{{\overline{c}}}
\def\fbar{{\overline{f}}}
\def\gbar{{\overline{g}}}
\def\hbar{{\overline{h}}}
\def\betabar{{\overline{\beta}}}
\def\fM{{\mathfrak M}}
\def\fR{{\mathfrak R}}
\def\fa{{\mathfrak{a}}}
\def\fp{{\mathfrak{p}}}
\def\tk{{\widetilde{k}}}
\def\tvarphi{{\widetilde{\varphi}}}
\def\tF{{\widetilde{F}}}
\def\tG{{\widetilde{G}}}
\def\tL{{\widetilde{L}}}
\def\tS{{\widetilde{S}}}
\def\tX{{\widetilde{X}}}
\def\tY{{\widetilde{Y}}}
\def\talpha{{\widetilde{\alpha}}}
\def\tphi{{\widetilde{\varphi}}}
\def\Berk{{\rm Berk}}
\def\<{{\langle }}
\def\>{{\rangle }}
\def\<<{{\langle \! \langle}}
\def\>>{{\rangle \! \rangle}} 
\def\({(\!(}
\def\){)\!)}
\def\[{[\![}
\def\]{]\!]}
\DeclareMathSymbol{\varnothing} {\mathord}{AMSb}{"3F} 
\theoremstyle{definition} 
\theoremstyle{remark} 
\begin{document}
\title{The Minimal Resultant Locus}

\author{Robert Rumely}
\address{Robert Rumely\\ 
Department of Mathematics\\
University of Georgia\\
Athens, Georgia 30602\\
USA}
\email{rr@math.uga.edu}

\date{April 5, 2013}
\subjclass[2000]{Primary  37P50, 11S82; 
Secondary  37P05, 11Y40, 11U05} 
\keywords{minimal resultant, potential good reduction} 
\thanks{Work carried out in part during the ICERM Program ``Complex and $P$-adic dynamics'', Spring 2012.}

\begin{abstract}
Let $K$ be a complete, algebraically closed,
nonarchimedean  valued field, and let $\varphi(z) \in K(z)$ have degree $d \ge 2$.  
We give an algorithm to determine whether $\varphi$ has potential good reduction over $K$,   
based on a geometric reformulation of the problem using the Berkovich Projective Line. 
We show the minimal resultant is is either achieved at a single point in $\PP^1_{\Berk}$, or on a segment,
and that minimal resultant locus is contained in the tree in $\PP^1_\Berk$ 
spanned by the fixed points and poles of $\varphi$. 
When $\varphi$ is defined over $\QQ$ the algorithm runs in probabilistic polynomial time.
If $\varphi$ has potential good reduction, and is defined over a subfield $H \subset K$, 
we show there is an $L \subset K$ with $[L:H] \le (d+1)^2$ such that $\varphi$ has good reduction over $L$. 
\end{abstract}

\maketitle

Let $K$ be a complete, algebraically closed nonarchimedean valued field with absolute value $| \cdot |$
and associated valuation $\ord(\cdot) = -\log(| \cdot |)$. 
Write $\cO$ for the ring of integers of $K$, $\fM$ for its maximal ideal, and $\tk$ for its residue field.

Let $\varphi(z)  \in K(z)$  be a rational function with $\deg(\varphi) = d \ge 1$.  
Then there are homogeneous polynomials $F(X,Y), G(X,Y) \in K[X,Y]$ of degree $d$,  
having no common factor, such that the map $[X:Y] \mapsto [F(X,Y):G(X,Y)]$ gives the action of $\varphi$ on $\PP^1$.    
After scaling $F$ and $G$ appropriately, one can arrange that $F$ and $G$ belong to $\cO[X,Y]$ 
and that at least one of their coefficients is a unit in $\cO$.  Such a pair $(F,G)$ 
is called a {\em normalized representation} of $\varphi$; it is unique up to scaling by a unit in $\cO$.
Writing $F(X,Y) = f_d X^d +f_{d-1} X^{d-1}Y + \cdots + f_0 Y^d$ 
and  $G(X,Y) = g_d X^d + g_{d-1} X^{d-1}Y + \cdots + g_0 Y^d$,  
the resultant of $F$ and $G$ is 
\begin{equation} \label{FGResultant} 
\Res(F,G) \ = \ \det\Bigg( \ \left[ \begin{array}{ccccccc} 
                                     f_d & f_{d-1} & \cdots &  f_0   &         &        &     \\
                                         & f_d & f_{d-1}    & \cdots &    f_0           &     \\
                                         &     &        &         & \vdots &        &     \\
                                         &     &        &    f_d  &   f_{d-1}  & \cdots & f_0 \\
                                     g_d & g_{d-1} & \cdots &  g_0   &         &        &     \\
                                         & g_d & g_{d-1}    & \cdots &    g_0           &     \\
                                         &     &        &         & \vdots &        &     \\
                                         &     &        &    g_d  &   g_{d-1}  & \cdots & g_0 
                             \end{array} \right] \ \Bigg) \ ,
\end{equation} 
and the quantity 
\begin{equation} \label{ResPhi}
\ordRes(\varphi) \ := \ \ord(\Res(F,G))
\end{equation}
is 
independent of the choice of normalized representation.  By construction, it is non-negative.  

The {\em reduction} $\tphi$ is the map $[\tX:\tY] \mapsto [\tF(\tX,\tY):\tG(\tX,\tY)]$ on $\PP^1(\tk)$ 
obtained by reducing $F$ and $G$ $\pmod{\fM}$ and eliminating common factors.  
If $\tphi$ has degree $d$, then $\varphi$ is said to have
{\em good reduction}.  
Likewise, $\varphi$ is said to have {\em potential good reduction} if after a change of coordinates
by some $\gamma \in \GL_2(K)$, 
the map $\varphi^{\gamma} = \gamma^{-1} \circ \varphi \circ \gamma$ 
has good reduction. It is well known (see e.g. \cite{Sil}, Theorem 2.15) 
that $\varphi$ has good reduction if and only if $\ordRes(\varphi) = 0$. 
\smallskip

It has been a long-standing problem to find an algorithm to decide
whether or not a given $\varphi$ has potential good reduction.  
When $\varphi$ is defined over a local field $H_v$,
Bruin and Molnar (\cite{BM}) recently gave an algorithm that determines when $\varphi$
has potential good reduction over $H_v$.
Their algorithm involves a recursive search, and depends
on the fact that $H_v$ is discretely valued.

In this paper we solve the problem by reformulating 
it in terms of the Berkovich projective line $\PP^1_{\Berk} = \PP^1_{\Berk}/K$.  We show that the map 
$\gamma \rightarrow \ordRes(\varphi^{\gamma})$ 
factors through a function $\ordRes_{\varphi}(\cdot)$ on $\PP^1_\Berk$ 
which is is continuous, piecewise affine, and convex upwards on each path.  
It takes on a minimum value. We study the properties of $\ordRes_\varphi(\cdot)$ and the set 
$\MinRes(\varphi) \subset \PP^1_{\Berk}$, the {\em Minimal Resultant Locus},  
where its minimal value is attained. 
We use this to give an algorithm that decides whether $\varphi$ has potential good reduction   
and finds a $\gamma$ for which $\varphi^{\gamma}$ has a minimal resultant.  
When $\varphi$ is defined over a 
subfield $H \subset K$, 
we obtain an \`{a} priori bound  of $(d+1)^2$
for the degree of an extension $L/H$ such that there is a $\gamma \in \GL_2(L)$ 
for which $\ordRes(\varphi^{\gamma})$ is minimal.  

\smallskip
Recall that $\PP^1_{\Berk}$ 
is a path-connected Hausdorff space containing $\PP^1(K)$.
By Berkovich's classification theorem (see for example \cite{B-R}, p.5), 
$\PP^1_{\Berk}$ can be viewed as a space whose points 
correspond to discs in $K$.  There are four types of points:   
type I points are the points of $\PP^1(K)$, which we regard as discs of radius $0$.  
Type II and III points correspond to discs $D(a,r) = \{z \in K : |z-a| \le r\}$,   
with type II points corresponding to discs $D(a,r)$ with $r$ in the value group $|K^{\times}|$,
and type III points corresponding to those with $r \notin |K^{\times}|$. 
The point $\zeta_G$ corresponding to $D(0,1)$ is called the {\em Gauss point}.
Type IV points serve to complete $\PP^1_{\Berk}$; they correspond to (cofinal equivalence classes of) 
sequences of nested discs with empty intersection.  
Paths in $\PP^1_{\Berk}$ correspond to ascending or descending chains of discs, 
or unions of chains sharing an endpoint.  
For example the path from $0$ to $1$
in $\PP^1_{\Berk}$ corresponds to the chains 
$\{D(0,r) : 0 \le r \le 1\}$ and $\{D(1,r) : 1 \ge r \ge 0\}$;  here $D(0,1) = D(1,1)$. 
Topologically, $\PP^1_{\Berk}$ is a tree:     
there is a unique path $[x,y]$ between any two points $x, y \in \PP^1_{\Berk}$.

The set $\HH_{\Berk} = \PP^1_{\Berk} \backslash \PP^1(K)$ is called the {\em Berkovich upper halfspace}; 
it carries a metric $\rho(x,y)$ called the {\em logarithmic path distance}, for which the length of the 
path corresponding to $\{D(a,r) : R_1 \le r \le R_2\}$ is $\log(R_2/R_1)$. 
There are two natural topologies on $\PP^1_{\Berk}$, called the {\em weak}\, and {\em strong} topologies. 
The weak topology on $\PP^1_{\Berk}$ is the coarsest one which makes the evaluation functionals 
$z \rightarrow |f(z)|$ continuous for all $f(z) \in K(z)$;   
under the weak topology, $\PP^1_{\Berk}$ is compact and $\PP^1(K)$ is dense in it.
The basic open sets for the weak topology are the path-components 
of $\PP^1_{\Berk} \backslash \{P_1, \ldots, P_n\}$
as $\{P_1, \ldots, P_n\}$ ranges over finite subsets of $\HH_{\Berk}$. 
The strong topology on $\PP^1_{\Berk}$ (which is finer than the weak topology) 
restricts to the topology on $\HH_{\Berk}$ induced by $\rho(x,y)$.  
The basic open sets for the strong topology 
are the $\rho(x,y)$-balls in $\HH_{\Berk}$, 
together with the basic open sets from the weak topology.  
Type II points are dense in $\PP^1_{\Berk}$ for both topologies.
The action of $\varphi$ on $\PP^1(K)$ extends functorially to an action on $\PP^1_{\Berk}$, 
which  is continuous for both topologies, and takes points of a given type to points of the same type.    
Similarly, the action of  $\GL_2(K)$ on $\PP^1(K)$ extends 
to an action on $\PP^1_{\Berk}$, which is continuous for both topologies, 
and preserves the type of each point.  The action of $\GL_2(K)$ also preserves the logarithmic path distance:  
$\rho(\gamma(x),\gamma(y)) = \rho(x,y)$ for all $x, y \in \HH_\Berk$ and all $\gamma \in \GL_2(K)$.   
For these and other facts, see (\cite{B-R}) and  
(\cite{BIJL}, \cite{Berk1}, \cite{Fab}, \cite{F-RL2}, \cite{FRLErgodic}, \cite{R-L1}).  

\smallskip
It follows from standard formulas for the resultant (see for example (Silverman \cite{Sil}, Exercise 2.7, p.75))
that for each $\gamma \in \GL_2(K)$ and each $\tau \in K^{\times} \cdot \GL_2(\cO)$,  
we have
\begin{equation*}
\ordRes(\varphi^{\gamma}) \ = \ \ordRes(\varphi^{\gamma \tau}) \ . 
\end{equation*}  
On the other hand, $\GL_2(K)$ acts transitively on type II points, and  
$K^{\times} \cdot \GL_2(\cO)$ is the stabilizer of the Gauss point.  
This means there is a well-defined function \ $\ordRes_{\varphi}(\cdot)$ \ 
on the type II points in $\PP^1_\Berk$, given by 
\begin{equation} \label{ordRes_varphiDef}
\ordRes_{\varphi}(\gamma(\zeta_G)) \ := \ \ordRes(\varphi^{\gamma}) \ .
\end{equation}

This observation is the key to our investigation.  Our main result is 

\begin{theorem}[Main Theorem] \label{MainThm}  Suppose $d = \deg(\varphi) \ge 2$.  
The function $\ordRes_{\varphi}(\cdot)$ on type {\rm II} points 
extends uniquely to a function $\ordRes_{\varphi} :\PP^1_{\Berk} \rightarrow [0,\infty]$
continuous with respect to the strong topology.  
On each path in $\PP^1_{\Berk}$, 
it is piecewise affine and convex upwards with respect to the logarithmic path distance.
It is finite on $\HH_{\Berk}$ and $\infty$ on $\PP^1(K)$.
It achieves a minimum on $\PP^1_{\Berk}$.  
The set $\MinRes(\varphi)$  where $\ordRes_{\varphi}(\cdot)$ takes on its minimum 
is contained in the tree $\Gamma_{\Fix,\varphi^{-1}(\infty)}$ spanned by the 
fixed points and poles of $\varphi$ in $\PP^1(K)$,   
and lies in  
$\{z \in \HH_\Berk : \rho(\zeta_G,z) \le \frac{2}{d-1} \ordRes(\varphi) \}$.
$\MinRes(\varphi)$  consists of a single type {\rm II} point if $d$ is even, 
and is a type {\rm II} point or a segment with type {\rm II} endpoints if $d$ is odd.  
If the minimum value of $\ordRes_{\varphi}(\cdot)$ is $0$ 
$($that is, if $\varphi$ has potential good reduction$)$,
then $\MinRes(\varphi)$ consists of a single point.
\end{theorem}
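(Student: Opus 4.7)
The plan is to reduce the theorem to an explicit formula for $\ordRes_\varphi$ along geodesics between type II points, and then deduce the structural claims via piecewise-affine convexity and a local slope formula. Let $(F,G)$ be a normalized representation of $\varphi$. For $\gamma \in \GL_2(K)$ with matrix $M$, the transformation law for the resultant under pre- and post-composition, together with the renormalization of $M^{-1}\circ (F\circ M, G\circ M)$, yields a formula of the shape
\[
\ordRes(\varphi^\gamma) \;=\; \ordRes(\varphi) + d(d+1)\,\ord(\det M) - 2(d+1)\,\lambda(M),
\]
where $\lambda(M)$ is the minimum coefficient-valuation of the renormalized pair. Parametrizing a geodesic between two type II points by a one-parameter family $M_t$, the term $\ord(\det M_t)$ is affine in $t$ and $\lambda(M_t)$ is a minimum of finitely many affine functions in $t$, hence concave piecewise affine with integer slopes. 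Consequently $\ordRes_\varphi$ is piecewise affine and convex upwards with integer slopes along every geodesic.

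Continuity in the strong topology then extends $\ordRes_\varphi$ uniquely from the dense set of type II points to all of $\PP^1_\Berk$; the extension is finite on $\HH_\Berk$ by construction. Approaching a type I point $x \in \PP^1(K)$ along a radial geodesic, an unavoidable coefficient cancellation in the renormalization forces $\lambda(M_t) \to -\infty$ and $\ordRes_\varphi \to \infty$. The sublevel sets of $\ordRes_\varphi$ are strong-closed and, by this boundary blow-up, contained in weakly compact subsets of $\PP^1_\Berk$, so the infimum is attained.

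The remaining assertions rest on a local slope formula at each type II point $\xi$: the outgoing slope of $\ordRes_\varphi$ in a tangent direction $\vec v$ should be an integer-valued expression in the counts $N_\Fix(\xi,\vec v)$ and $N_{\varphi^{-1}(\infty)}(\xi,\vec v)$ of fixed points and poles of $\varphi$ lying in the component of $\PP^1_\Berk \setminus \{\xi\}$ cut out by $\vec v$; crucially, the slope should be strictly positive whenever $\vec v$ contains neither fixed points nor poles. This forces $\MinRes(\varphi) \subset \Gamma_{\Fix,\varphi^{-1}(\infty)}$. Integrating the slope along $[\zeta_G,\xi]$, and using that the outgoing slope off $\MinRes(\varphi)$ is at least $(d-1)/2$, gives the bound $\rho(\zeta_G,z) \le \tfrac{2}{d-1}\ordRes(\varphi)$. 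The parity of the slope formula modulo $2$ forbids a zero slope when $d$ is even, and allows one only on interior edges of the tree when $d$ is odd, yielding respectively a single type II point or a segment with type II endpoints. Finally, when the minimum value is $0$, uniqueness of the good-reduction type II point (since the reduction $\tphi$ determines the conjugating $\gamma$ up to $K^\times \GL_2(\cO)$) collapses $\MinRes(\varphi)$ to a single point.

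The main obstacle will be the local slope formula. Convexity and piecewise-affineness follow directly from the resultant transformation law above, but interpreting the slope as an integer combinatorial count in fixed points and poles requires careful bookkeeping of which coefficients of $F\circ M_t$ and $G\circ M_t$ dominate $\lambda(M_t)$ and how the dominating set changes across breakpoints, together with the identification of these dominating coefficients with zeros of $XG(X,Y) - YF(X,Y)$ and of $G(X,Y)$. Once this formula is in hand, the remaining structural statements are consequences of elementary calculus on the tree $\Gamma_{\Fix,\varphi^{-1}(\infty)}$.
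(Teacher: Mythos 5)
Your overall strategy matches the paper's: write $\ordRes(\varphi^\gamma)$ via the resultant transformation law as a constant plus $(d^2+d)\ord(\det\gamma)$ minus a multiple of the minimal coefficient valuation, so that along a geodesic the function is a maximum of finitely many affine functions, then extend by Lipschitz continuity and analyze directional behavior at type II points. But there are genuine gaps. First, the step you yourself flag as ``the main obstacle'' --- the local slope formula expressing the outgoing slope as a combinatorial count of fixed points and poles in each direction --- is exactly the content you would need to prove, and you do not prove it; moreover it is stronger than necessary. The paper never establishes such a count: it proves only the qualitative implication that if a direction $\vv$ at a type II point $Q$ contains \emph{no} fixed points and \emph{no} poles, then $\ordRes_\varphi$ is strictly increasing in $\vv$ (Proposition \ref{IncreasingProp}). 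The mechanism is to move $Q$ to $\zeta_G$ by an affine $\gamma$, factor $G^\gamma$ over the poles and $YF^\gamma - XG^\gamma$ over the fixed points, and use the ultrametric inequality to conclude that the relevant extreme coefficient ($a_d$ or $b_d$, resp.\ $b_0$) is a unit, which by the explicit directional criterion (Lemma \ref{CriterionLemma}) rules out ``decreasing'' and ``constant.'' You name the right polynomials but leave the actual argument open, so the containment $\MinRes(\varphi)\subset\Gamma_{\Fix,\varphi^{-1}(\infty)}$ --- and hence the existence of a minimum --- is not established.

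Second, your normalization constant is off in a way that matters: for degree-$d$ binary forms one has $\Res(cF,cG)=c^{2d}\Res(F,G)$, so the correct formula subtracts $2d\min(\ord(F^\gamma),\ord(G^\gamma))$, not $2(d+1)\lambda(M)$. The slopes are then integers $m\equiv d^2+d\pmod{2d}$; this congruence is what forbids zero slopes for $d$ even and permits them for $d$ odd. With your constant $2(d+1)$ the slopes would be $(d+1)(d-2\ell)$, which vanish precisely when $d$ is \emph{even} --- the opposite of the dichotomy you assert. Third, to conclude that $\MinRes(\varphi)$ is a \emph{segment} (rather than a branched subtree) when $d$ is odd, you need that at any point there are at most two tangent directions in which $\ordRes_\varphi$ is locally constant; the paper proves this (Lemma \ref{TwoDirsCor}) by showing that if the reduction of $F^\gamma$ or $G^\gamma$ is a monomial $X^\ell Y^{d-\ell}$ with $\ell\ge 1$, then after translating by any unit $\beta$ the constant coefficient becomes a unit, killing constancy in $\vv_{\gamma(\beta)}$. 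Finally, your radius bound via ``slope at least $(d-1)/2$ off $\MinRes(\varphi)$'' controls only the distance from $\zeta_G$ to the nearest point of $\MinRes(\varphi)$, not to its far end when it is a segment; the paper instead bounds $\min(\ord(a_0),\ord(b_0))$ and $\min(\ord(a_d),\ord(b_d))$ by $\ordRes(\varphi)$ (expanding the resultant determinant along a column) to get a lower bound for $\ordRes_\varphi$ valid on every ray from $\zeta_G$, which excludes the whole complement of the ball of radius $\frac{2}{d-1}\ordRes(\varphi)$ at once.
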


In the proof of Theorem \ref{MainThm}, one sees that each affine piece of $\ordRes_{\varphi}(\cdot)$ 
has an integer integer slope $m \equiv d^2+d \pmod{2d}$ 
with $-d^2-d \le m \le d^2+d$, and that breaks between affine pieces occur at type II points. 
By Proposition \ref{GeneralTreeProp}, in Theorem \ref{MainThm} the tree $\Gamma_{\Fix,\varphi^{-1}(\infty)}$ 
can be replaced by the tree $\Gamma_{\Fix,\varphi^{-1}(a)}$ 
spanned by the fixed points and the preimages of $a$, for any $a \in \PP^1(K)$.
The Theorem has the following consequences:  

\smallskip
(1) Relative to computations in $K$, there is an algorithm (Algorithm A) to determine 
whether or not $\varphi$ has potential good reduction. 
If it does, one can find a $\gamma \in \GL_2(K)$ such that $\varphi^{\gamma}$ has good reduction. 

Indeed, the algorithm is as follows.  
First, find the fixed points $\{P_0, \ldots, P_d\}$ and poles $\{Q_1, \ldots, Q_d\}$ 
of $\varphi$.  Choose one of the fixed points, say $P_0$, and restrict  $\ordRes_{\varphi}(\cdot)$ in turn 
to each of the $2d$ paths $[P_0,P_k]$ and $[P_0,Q_k]$ for $k = 1, \ldots, d$.  The resulting piecewise affine
functions can be computed and their minima found.  If the minimum value on some path is $0$,
then $\varphi$ has good reduction at the corresponding point.  
If all minima are positive, then $\varphi$ does not have potential good reduction. 
When $\varphi$ is defined over $\QQ$, Algorithm A can be implemented to run in probabilistic polynomial time.

When $\varphi$ is defined over a local field $H_v$, we give another algorithm (Algorithm B) which  
minimizes $\ordRes(\varphi^{\gamma})$ for $\gamma \in \GL_2(H_v)$. 
This algorithm is based on steepest descent, and runs in probabilistic polynomial time.  
It answers the same question as the Bruin-Molnar algorithm, but is more conceptual, 
and should be more efficient. 
However, the two algorithms have many aspects in common.

\smallskip
(2) If $\varphi$ is defined over a subfield $H \subset K$, there is an \`a priori bound 
of $(d+1)^2$ for the degree of an extension $L/H$ such that $\ordRes(\varphi^{\gamma})$ 
is minimal for some $\gamma \in \GL_2(L)$ (see Theorem \ref{DegreeBoundTheorem}).  
In particular, if $\varphi$ has potential good reduction, 
this is a bound for the degree of an extension
where it achieves good reduction. It follows from this 
that if $H$ is Henselian (in particular, if $H$ is complete),
the statement ``$\varphi$ has potential good reduction'' 
is first-order in the theory of $H$, in the sense of mathematical logic.

\smallskip 

(3) The Minimal Resultant Locus can be a segment of positive length 
(see Examples 2.5 and 2.7).   
Hence there can be fundamentally different coordinate changes 
(that is, coordinate changes by $\gamma$'s belonging to different cosets of $K^{\times} \cdot \GL_2(\cO)$)
for which $\varphi^\gamma$ has minimal resultant.  
However, this can only happen when $d$ is odd and $\varphi$
does not have potential good reduction.


\smallskip

(4) If $\varphi$ is defined over a subfield $H \subset K$, and $\varphi$ has potential good reduction, 
let $H_\varphi$ be the intersection of all fields $L$ with $H \subset L \subset K$ 
such that $\varphi^{\gamma}$ has good reduction
for some $\gamma \in \GL_2(L)$ (the `field of moduli for the good reduction problem').  
We give examples where $H_\varphi = H$ but $\varphi^{\gamma}$ 
does not have good reduction for any $\gamma \in \GL_2(H)$.
Thus there need not be a unique minimal extension $L/H$ where  
$\varphi$ achieves good reduction.  

\smallskip

(5) Suppose $H$ is a number field. An elliptic curve $E/H$ has a global minimal model over $H$
if and only if a certain class $[\fa_E]$ in the ideal class group of $\cO_H$,
the {\em Weierstrass class}, is principal.  
When $\varphi(z) \in H(z)$ and $\deg(\varphi) \ge 2$, 
Silverman has constructed an  ideal class $[\fa_{\varphi}]$   
such that if $\varphi$ has global minimal model over $H$, then $[\fa_{\varphi}]$
is trivial (see \cite{Sil}, Proposition 4.99). He asks if the converse is true 
(\cite{Sil},  p.237, Exercise 4.4.6(c)). 
We give examples of number fields $H$  and functions $\varphi(z) \in H(z)$  
for which $[\fa_{\varphi}]$ is trivial but $\varphi$ has no global minimal model.  

\medskip
Our second result concerns the stability of $\ordRes_\varphi(\cdot)$ and $\MinRes(\varphi)$
under perturbations of $\varphi$.  It also specifies the precision needed for numerical 
implementations of Algorithms A and B.

\begin{theorem} \label{StabilityThm}  Suppose $\varphi(z), \tvarphi(z) \in  K(z)$ have
degree $d \ge 2$, with normalized representations $(F,G)$, $(\tF,\tG)$ respectively.  
Put $R = \ordRes(\varphi)$, and let $M > 0$ be  arbitrary.  If 
\begin{equation} \label{Congruence}
\min\big(\ord(\tF-F),\ord(\tG-G)\big) \ > \ \max\big(R,\frac{1}{2d}(R + (d^2+d) M)\big) \ ,
\end{equation}
then $\ordRes_\varphi(\xi) = \ordRes_{\tvarphi}(\xi)$ 
for all $\xi$ with $\rho(\zeta_G,\xi) \le M$.  Let $f(d) = \frac{2d^2 + 3d-1}{2d^2 - 2d}$.
If
\begin{equation} \label{MinResCong}
\min\big(\ord(\tF-F),\ord(\tG-G)\big) \ > \ f(d) \cdot R \ , 
\end{equation}
then  $\MinRes(\varphi) = \MinRes(\tvarphi)$, and $\ordRes_\varphi(\xi) = \ordRes_{\tvarphi}(\xi)$ 
for all $\xi$ with $\rho(\zeta_G,\xi) \le \frac{2}{d-1} \ordRes(\varphi)$.
\end{theorem}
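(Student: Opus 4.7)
The plan is to prove the ball-stability statement first, then deduce $\MinRes(\varphi) = \MinRes(\tvarphi)$ by taking $M = \tfrac{2}{d-1}R$ and applying the containment $\MinRes(\varphi) \subseteq \{\xi : \rho(\zeta_G,\xi) \le \tfrac{2}{d-1}R\}$ from Theorem~\ref{MainThm}.

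Under hypothesis (\ref{Congruence}), first observe that $N := \min(\ord(\tF-F),\ord(\tG-G)) > R$ implies $\ord(\Res(\tF,\tG)) = R$: since $\Res$ is a polynomial of total degree $2d$ in the coefficients and both $(F,G),(\tF,\tG) \in \cO[X,Y]^2$ are normalized, the two resultants differ in valuation by at least $N > R$. By strong-topology continuity from Theorem~\ref{MainThm} and density of type II points, it suffices to check $\ordRes_\varphi(\xi) = \ordRes_{\tvarphi}(\xi)$ at type II $\xi$ with $\rho(\zeta_G,\xi) \le M$. Fix such a $\xi$ at distance $M' \le M$ and choose $\gamma \in \GL_2(K)$ with $\gamma(\zeta_G) = \xi$ via a Cartan decomposition $\gamma = \sigma\mu$, where $\sigma \in \GL_2(\cO)$ and $\mu = \diag(\lambda,1)$ with $\ord(\lambda) = M'$. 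Take the representation $(F^\gamma, G^\gamma) := \gamma^*(F\circ\gamma, G\circ\gamma)^T$ of $\varphi^\gamma$, where $\gamma^*$ is the classical adjugate of $\gamma$. Two formulas drive the argument: (a) $\ord(\Res(F^\gamma, G^\gamma)) = (d^2+d)M' + R$, a value depending only on $\gamma$, obtained by combining the identity $\Res(F\circ\gamma, G\circ\gamma) = \det(\gamma)^{d^2}\Res(F,G)$ with the fact that left-multiplication of a pair of forms by a $2\times 2$ matrix scales the resultant by the $d$-th power of its determinant (applied with $\det(\gamma^*) = \det(\gamma)$); (b) consequently $\ordRes_\varphi(\xi) = (d^2+d)M' + R - 2d \cdot e(\gamma)$, where $e(\gamma)$ is the minimum coefficient valuation of $(F^\gamma, G^\gamma)$, with the analogous formula for $\tvarphi$ involving $\te(\gamma)$. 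A direct expansion of the coefficients of $(F^\gamma, G^\gamma)$ in terms of those of $F, G$ and the entries of $\sigma$ and $\lambda$ shows that $F^\gamma - \tF^\gamma$ and $G^\gamma - \tG^\gamma$ have all coefficient valuations at least $N$; hence $\te(\gamma) = e(\gamma)$ whenever $N > e(\gamma)$, and then (a) applied to both $\varphi$ and $\tvarphi$ (with common $R$) gives $\ordRes_\varphi(\xi) = \ordRes_{\tvarphi}(\xi)$. Combining (b) with $\ordRes_\varphi(\xi) \ge 0$ forces $e(\gamma) \le \tfrac{1}{2d}\bigl((d^2+d)M' + R\bigr) \le \tfrac{1}{2d}(R + (d^2+d)M)$, so the hypothesis $N > \tfrac{1}{2d}(R + (d^2+d)M)$ yields $N > e(\gamma)$ uniformly for $\xi$ in the ball.

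For the $\MinRes$ statement, substitute $M = \tfrac{2}{d-1}R$ into hypothesis (\ref{Congruence}): the bound becomes $\tfrac{1}{2d}\bigl(R + (d^2+d) \cdot \tfrac{2R}{d-1}\bigr) = R \cdot \tfrac{2d^2+3d-1}{2d(d-1)} = f(d) R$, which dominates $R$ since $f(d) > 1$ for $d \ge 2$. Hence (\ref{MinResCong}) implies (\ref{Congruence}) at this $M$, yielding $\ordRes_\varphi \equiv \ordRes_{\tvarphi}$ on $B := \{\xi : \rho(\zeta_G,\xi) \le \tfrac{2}{d-1}R\}$. Since also $\ordRes(\tvarphi) = R$, Theorem~\ref{MainThm} places both $\MinRes(\varphi)$ and $\MinRes(\tvarphi)$ inside $B$; two functions that coincide on $B$ and realize their global minima inside $B$ must share their minimum loci, so $\MinRes(\varphi) = \MinRes(\tvarphi)$.

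The main obstacle is computation (a): one must select the ``right'' representation of $\varphi^\gamma$ so that the unnormalized resultant exponent $(d^2+d)M'+R$ matches precisely the constant $(d^2+d)$ appearing in the statement's bound. An alternative natural scaling of the representation (e.g.\ by $\det(\gamma)^{-1}$ in the pair) produces an exponent like $(d^2-d)M'+R$, which weakens the forced upper bound on $e(\gamma)$ by an additive $M'$ and thus fails at the boundary $M'=M$ of the ball. Once the adjugate-based representation is fixed, the remaining valuation bookkeeping is routine.
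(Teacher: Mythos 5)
Your proposal is correct and follows essentially the same route as the paper: the same adjugate-based change-of-representation formula $\ordRes_\varphi(\xi) = R + (d^2+d)\,\ord(\det\gamma) - 2d\min(\ord(F^\gamma),\ord(G^\gamma))$ for an integral $\gamma$ with $\ord(\det\gamma) = \rho(\zeta_G,\xi)$, the same use of $\ordRes_\varphi(\xi)\ge 0$ to bound $\min(\ord(F^\gamma),\ord(G^\gamma))$ by $\frac{1}{2d}(R+(d^2+d)M)$, and the same specialization $M=\frac{2}{d-1}R$ combined with Theorem~\ref{MainThm} for the $\MinRes$ statement. The only (immaterial) difference is that you produce the integral matrix via a Cartan decomposition $\sigma\mu$ with $\sigma\in\GL_2(\cO)$, whereas the paper writes down an explicit upper-triangular $\gamma\in\GL_2(K)\cap M_2(\cO)$.
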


Note that $f(2) = 3.25$, $f(3) = 2.166 \cdots$, 
and $1 < f(d) < 2$ for $d \ge 4$.

\medskip
The structure of the paper is as follows.  In Section \ref{MainThmProofSection} 
we prove Theorems \ref{MainThm} and \ref{StabilityThm}.
In Section \ref{ExamplesSection} we give examples illustrating various phenomena which occur.  
In Section \ref{DiscApplicSection} we give applications of the theory.  
In Section \ref{AlgorithmsSection} we present Algorithms A and B.  Finally, in Section \ref{d=1Section} 
we prove an analogue of Theorem \ref{MainThm} when $d = 1$.

\section{Proof of the Main Theorems } \label{MainThmProofSection}  

In this section we establish Theorems \ref{MainThm} and \ref{StabilityThm}.  
Suppose $\varphi(z) \in K(z)$ has degree $d$.  Then 
\begin{equation*}
\varphi(z) \ = \ \frac{F(z,1)}{G(z,1)}
\end{equation*}
where $F(X,Y) = f_d X^d + f_{d-1} X^{d-1} Y + \cdots + f_0 Y^d$ and 
$G(X,Y) = g_d X^d + g_{d-1} X^{d-1} Y + \cdots + g_0 Y^d$ are homogeneous polynomials in $K[X,Y]$ of degree $d$ 
with no common factor. 
The pair $(F,G)$ is called a {\em representation} of $\varphi$;  it is unique up to scaling by a nonzero constant. 
Put $\ord(F) = \min_{0 \le i \le d} (\ord(f_i))$, $\ord(G) = \min_{0 \le i \le d} (\ord(g_i))$.    

The resultant of $F$ and $G$ is defined by the $2d \times 2d$ determinant in formula (\ref{FGResultant}).  
For any $c \in K^{\times}$, we have  $\Res(cF,cG) = c^{2d} \Res(F,G)$.  
By choosing $c$ so that $\ord(c)=\min(\ord(F),\ord(G))$ and replacing $(F,G)$ by $(c^{-1}F,c^{-1}G)$
we can assume that
\begin{equation*}
 \min\big(\ord(F),\ord(G)\big) \ = \ 0 \ ;
\end{equation*}
in this case $(F,G)$ is called a {\em normalized representation}
of $\varphi$, and $\ordRes(\varphi)$ is defined to be $\ord(\Res(F,G))$ as in (\ref{ResPhi}).  
Clearly  $\ordRes(\varphi)$ is independent of the choice of normalized representation, 
and $\ordRes(\varphi) \ge 0$.  

Whether or not $(F,G)$ is normalized, we have 
\begin{equation}
\ordRes(\varphi) \ = \ \ord(\Res(F,G)) - 2d \min(\ord(F),\ord(G)) \ .
\end{equation}   
Given
$\gamma = \left[ \! \begin{array}{cc} A & B \\ C & D \end{array} \! \right] \in \GL_2(K)$,
let  
$\Adj(\gamma) = \left[ \! \begin{array}{rr} D & -B \\ -C & A \end{array} \! \right]$ 
and define $(F^{\gamma},G^{\gamma})$ by 
\begin{equation} \label{FGg1} 
\left[ \! \begin{array}{c} F^{\gamma}(X,Y) \\ G^{\gamma}(X,Y) \end{array} \! \right]  = 
                    \Adj(\gamma) \! \circ \! \left[ \! \begin{array}{c} F \\ G \end{array} \! \right] \! \circ \! \gamma \! \circ \! 
                               \left[ \! \begin{array}{c} X \\ Y \end{array} \! \right]
                     = \left[ \! \begin{array}{c} D F(AX+BY) - BG(CX+DY) \\ -CF(AX+BY) + AG(CX+DY) \! \end{array} \right] \ .
\end{equation} 
Then $(F^{\gamma},G^{\gamma})$ is a homogeneous representation of $\varphi^{\gamma}$.  
It is known (see (\cite{Sil}, Exercise 2.7(c), p.76) that 
$\Res(F^{\gamma},G^{\gamma})  =  \Res(F,G) \cdot \det(\gamma)^{d^2 + d}$,
so 
\begin{equation} \label{KeyFormula}
\ordRes(\varphi^{\gamma}) \ = \ \ordRes(F,G) + (d^2 + d) \, \ord(\det(\gamma)) - 2d \min(\ord(F^{\gamma}), \ord(G^{\gamma})) \ . 
\end{equation} 

\smallskip
We will prove Theorems \ref{MainThm} and \ref{StabilityThm} after a series of preliminary results.  
In Theorem \ref{MainThm} it is assumed that $d \ge 2$;
however, for use in \S\ref{d=1Section}, we will develop the theory for $d \ge 1$, 
and make explicit the places where $d \ge 2$ is used. 

\smallskip
We begin by recalling some facts about the action of $\GL_2(K)$ on $\PP^1_\Berk$.

\begin{proposition} \label{GL_2Prop} 
The natural action of $\GL_2(K)$ on $\PP^1(K)$ extends to an action on $\PP^1_\Berk$
such that   

$(A)$ The stabilizer of $\zeta_G$ in $\GL_2(K)$ is $K^{\times} \cdot \GL_2(\cO)\,;$

$(B)$ For each $\gamma \in \GL_2(K)$, 
one has $\rho(\gamma(x),\gamma(y)) = \rho(x,y)$ for all $x, y \in \HH_\Berk\,;$  

$(C)$ For each $\gamma \in \GL_2(K)$ and each path $[x,y]$, one has 
$\gamma([x,y]) = [\gamma(x),\gamma(y)]\,;$  

$(D)$ For any triple $(a_0,A,a_1)$ where $a_0, a_1 \in \PP^(K)$, $a_0 \ne a_1$, 
and $A$ is a type II point in $[x,y]$, if $(b_0,B,b_1)$ is another triple of the same kind, 
there is a $\gamma \in \GL_2(K)$ such that $\gamma(a_0) = b_0$, $\gamma(A) = B$, and $\gamma(a_1) = b_1$.
In particular, $\GL_2(K)$ acts transitively on the type II points in $\PP^1_{\Berk}$.
\end{proposition}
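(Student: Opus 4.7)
The plan is to realize $\PP^1_\Berk$ via Berkovich's model as equivalence classes of multiplicative seminorms on the homogeneous coordinate ring $K[X,Y]$ (modulo scaling), and let $\gamma \in \GL_2(K)$ act by pullback through the linear substitution $(X,Y) \mapsto (X,Y)\gamma$. This automatically extends the Möbius action on $\PP^1(K)$, preserves the type of each point, and is continuous for the weak topology since pullback commutes with evaluation of seminorms. Continuity for the strong topology will follow once (B) is in hand.

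For (A), the Gauss point is the seminorm $\big|\sum a_{ij}X^iY^j\big|_{\zeta_G} = \max_{i,j}|a_{ij}|$. For $\gamma \in \GL_2(\cO)$, the substitution has entries in $\cO$, so the new coefficients are $\cO$-linear combinations of the old ones and $|\gamma^* p|_{\zeta_G} \le |p|_{\zeta_G}$; applying the same bound to $\gamma^{-1} \in \GL_2(\cO)$ forces equality. Nonzero scalars act trivially on projective seminorms, so $K^\times \cdot \GL_2(\cO) \subseteq \mathrm{Stab}(\zeta_G)$. Conversely, if $\gamma$ fixes $\zeta_G$, rescale $\gamma$ so that at least one entry is a unit; the preserved seminorm applied to $X$ and $Y$ forces all entries of $\gamma$ to lie in $\cO$, and applying the same bound to $\gamma^{-1}$ then forces $\det\gamma \in \cO^\times$.

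For (B) and (C), it suffices to verify isometry on a set of generators of $\GL_2(K)/K^\times$: the affine transformations $z \mapsto az+b$ and the inversion $z \mapsto 1/z$. Affine transformations send $D(x,r)$ to $D(ax+b,|a|r)$, so all path-differences $\log(r_2/r_1)$ are preserved. Inversion exchanges the branches at $\zeta_G$ corresponding to $0$ and $\infty$, and sends $\zeta_{a,r}$ (for $r < |a|$) to $\zeta_{1/a, r/|a|^2}$; a direct case-by-case computation on discs shows path lengths are preserved in every case. This gives (B). For (C), $\PP^1_\Berk$ is a uniquely arcwise connected topological tree and $\gamma$ is a homeomorphism, so arcs must map to arcs, forcing $\gamma([x,y]) = [\gamma(x),\gamma(y)]$. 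The strong topology is generated by the weak topology together with $\rho$-balls, so continuity in the strong topology is immediate from (B).

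For (D), the strategy is to transport every admissible triple to the canonical triple $(0, \zeta_G, \infty)$; composing with the inverse transport of $(b_0, B, b_1)$ then yields the desired $\gamma$. Apply a Möbius transformation $\gamma_1$ with $a_0 \mapsto 0$ and $a_1 \mapsto \infty$. By (C) and type preservation, $\gamma_1(A)$ lies on $[0,\infty]$ and is type II, hence $\gamma_1(A) = \zeta_{0,r}$ for some $r \in |K^\times|$. Pick $c \in K^\times$ with $|c|=r$ and compose with $z \mapsto z/c$, which fixes $0, \infty$ and sends $\zeta_{0,r}$ to $\zeta_G$. Transitivity on type II points then follows by taking $a_0, a_1$ in any two distinct tangent directions at $A$. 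The hardest step conceptually is (A): one must establish the \emph{reverse} inclusion in the stabilizer without invoking the transitivity from (D), which the direct seminorm argument above handles by working only with degree-one polynomials.
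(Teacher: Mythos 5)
Your proposal is correct, but it takes a genuinely different and more self-contained route than the paper, which for the most part cites Baker--Rumely: the paper invokes \cite{B-R} \S2.3 for the extension of the action, Proposition 2.30 for (B), and Corollary 2.13(B) for (D), and proves only (A) and (C) directly. For (A) the paper argues via reduction: since $\gamma$ fixes $\zeta_G$ it has nonconstant reduction, so the images $\gamma(0),\gamma(1),\gamma(\infty)$ have distinct reductions, one writes down an explicit $\gamma_0 \in \GL_2(\cO)$ matching those three values, and concludes because a M\"obius map fixing three points of $\PP^1(K)$ is scalar. Your seminorm computation on degree-one forms replaces that with a purely algebraic normalization argument (all entries in $\cO$ after rescaling, then $\det\gamma \in \cO^\times$ by applying the same bound to the adjugate), which avoids the reduction lemma entirely; just be careful that ``rescale so one entry is a unit'' must mean ``rescale so the maximum of the entries equals $1$,'' and that equality of seminorms on $\PP^1_\Berk$ is only up to a degree-dependent scalar, which your rescaling absorbs. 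Your proofs of (B) (checking isometry on the generators $z \mapsto az+b$ and $z \mapsto 1/z$ by tracking discs) and of (D) (transporting any admissible triple to $(0,\zeta_G,\infty)$ using (C) and type preservation) are exactly the content of the cited results in \cite{B-R}, so you gain self-containedness at the cost of some case-checking for the inversion; the paper's approach buys brevity. Your argument for (C) coincides with the paper's.
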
 

\begin{proof}  As discussed in (\cite{B-R}, \S2.3), 
the natural action of any rational function $f(z) \in K(z)$ on $\PP^1(K)$ extends uniquely to 
a continuous action on $\PP^1_\Berk$.  
For part (A), suppose $\gamma \in \GL_2(K)$ stabilizes $\zeta_G$, and let $\gamma(0) = a$, $\gamma(1) = b$,
$\gamma(\infty) = c$.   By (\cite{B-R}, Lemma 2.17) $\gamma(z)$ has nonconstant
reduction, so the reductions  $\abar$, $\bbar$, and $\cbar$ are distinct in $\PP^1(\tk)$.  
If none of $\abar, \bbar, \cbar$ is $\overline{\infty}$, then  
\begin{equation} \label{GL2O}
\gamma_0(z) \ = \ \frac{cz - a(b-c)/(b-a)}{z - (b-c)/(b-a)} 
\end{equation} 
belongs to $\GL_2(\cO)$ and satisfies $\gamma_0(0) = a$, $\gamma_0(1) = b$, $\gamma_0(\infty) = c$. 
If one of the reductions is $\overline{\infty}$, by making simple modifications to (\ref{GL2O})  
one still finds a $\gamma_0 \in \GL_2(\cO)$ with $\gamma_0(0) = a$, $\gamma_0(1) = b$,
$\gamma_0(\infty) = c$.  Since $\gamma_0^{-1} \circ \gamma \in \GL_2(K)$ fixes three points
in $\PP^1(K)$, it must be a multiple of the identity matrix.  Part (B) is (\cite{B-R}, Proposition 2.30).  
Part (C) follows from the fact that if $\gamma \in \GL_2(K)$, 
the action of $\gamma$ on $\PP^1_{\Berk}$ must be bijective and bicontinuous, 
since $\gamma^{-1} \circ \gamma = \gamma \circ \gamma^{-1} = id$.
Part (D) is (\cite{B-R}, Corollary 2.13 (B)).  
\end{proof}

\begin{lemma} \label{PathConvexityLemma}  
For any distinct points $x, y \in \PP^1(K)$, 
the function $\ordRes_{\varphi}(\cdot)$ on type {\rm II} points extends to a continuous function on the path $[x,y]$, 
which is piecewise affine with respect to the logarithmic path distance, and convex up. The extension is finite
on $[x,y] \cap \HH_{\Berk}$, and when $d \ge 2$, it is $\infty$ at $x$ and $y$.  

If $H$ is a field of definition for $\varphi$ {\rm (}so $H(x,y)$ is a field of definition
for $\varphi$, $x$, and $y${\rm )}, then each affine piece of $\ordRes_{\varphi}(\cdot)$ has the form $m t + c$
for some integer $m$ in the range $-d^2 -d \le m \le d^2 + d$ satisfying $m \equiv d^2 + d \pmod{2d}$, 
and some number $c$ in the value group $\ord(H(x,y)^{\times})$, 
where $t$ is a parameter measuring the logarithmic path distance along $[x,y]$.  
There are at most $d+1$ distinct affine pieces, and the breaks between affine pieces occur at type {\rm II} points.  
\end{lemma}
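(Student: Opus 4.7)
The plan is to reduce to the standard path $[0,\infty]$ by a Möbius change of coordinates, and then read off everything from formula~(\ref{KeyFormula}). By Proposition~\ref{GL_2Prop}(D), choose $\gamma \in \GL_2(H(x,y))$ with $\gamma(0) = x$ and $\gamma(\infty) = y$. The identity $(\varphi^\gamma)^\tau = \varphi^{\gamma\tau}$ together with definition~(\ref{ordRes_varphiDef}) yields $\ordRes_\varphi \circ \gamma = \ordRes_{\varphi^\gamma}$ on type~II points, while Proposition~\ref{GL_2Prop}(B),(C) shows that $\gamma$ is an isometry carrying $[0,\infty]$ bijectively onto $[x,y]$. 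Hence it suffices to treat $\varphi^\gamma$ on $[0,\infty]$, with field of definition $H(x,y)$. Parametrize the type~II points of $[0,\infty]$ by $\xi_s := \gamma_c(\zeta_G)$, where $\gamma_c = \diag(c,1)$ and $s = \ord(c) \in \ord(K^\times)$; then $\rho(\zeta_G,\xi_s) = |s|$.

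A direct application of (\ref{FGg1}) gives $F^{\gamma_c}(X,Y) = \sum_{i=0}^d f_i c^i X^i Y^{d-i}$ and $G^{\gamma_c}(X,Y) = \sum_{i=0}^d g_i c^{i+1} X^i Y^{d-i}$, so
\[
\min\bigl(\ord(F^{\gamma_c}),\,\ord(G^{\gamma_c})\bigr) \;=\; \min_{0 \le k \le d+1}(a_k + k s),
\]
where $a_0 = \ord(f_0)$, $a_{d+1} = \ord(g_d)$, and $a_k = \min(\ord(f_k),\ord(g_{k-1}))$ for $1 \le k \le d$. Substituting into (\ref{KeyFormula}) produces
\[
\ordRes_\varphi(\xi_s) \;=\; \ordRes(F,G) + (d^2+d)\,s \;-\; 2d \min_{0 \le k \le d+1}(a_k + k s).
\]
The inner minimum is a concave piecewise-affine function of the real variable $s$, so the full right-hand side is a continuous convex piecewise-affine function on $\RR$; it agrees with the prescribed values of $\ordRes_\varphi$ on the dense set $\ord(K^\times)$ and hence supplies the required continuous extension to $[0,\infty] \cap \HH_\Berk$. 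Each affine piece arises from a single index $k$, with slope $d(d+1-2k)$ (which is $\equiv d^2+d \pmod{2d}$ and lies in $[-d^2-d,\,d^2+d]$) and constant $\ordRes(F,G) - 2d\,a_k \in \ord(H(x,y)^\times)$. Breaks between pieces occur at $s = (a_k - a_{k'})/(k'-k)$, which belong to the divisible value group $\ord(K^\times)$ and hence correspond to type~II points.

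It remains to verify that the extension tends to $+\infty$ at the two type~I endpoints when $d \ge 2$. Coprimality of $F$ and $G$ rules out $f_0 = g_0 = 0$ (else $Y$ divides both) and $f_d = g_d = 0$ (else $X$ divides both), forcing $\min(a_0,a_1) < \infty$ and $\min(a_d,a_{d+1}) < \infty$. The dominant pieces of the lower envelope of $\{a_k + k s\}$ as $s \to +\infty$ and $s \to -\infty$ therefore come from indices $k \le 1$ and $k \ge d$, respectively, giving asymptotic slopes for $\ordRes_\varphi$ of at least $d(d-1) > 0$ at $s = +\infty$ and at most $-d(d-1) < 0$ at $s = -\infty$. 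Both force divergence to $+\infty$ at the corresponding endpoint. I expect the chief bookkeeping challenge to be pinning down the upper bound on the number of distinct affine pieces: a priori there are $d+2$ candidate slopes, and reaching the claimed bound of $d+1$ requires showing that some slope must be absent from the lower envelope under coprimality and normalization.
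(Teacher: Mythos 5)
Your argument follows the paper's proof essentially step for step: reduce to $[0,\infty]$ by a $\GL_2(H(x,y))$-conjugation, compute $(F^{\gamma_c},G^{\gamma_c})$ for $\gamma_c=\diag(c,1)$, and read off from (\ref{KeyFormula}) that $\ordRes_\varphi$ restricted to the path is the upper envelope of finitely many affine functions of $s=\ord(c)$. Your packaging of this envelope as $\ordRes(F,G)+(d^2+d)s-2d\min_{0\le k\le d+1}(a_k+ks)$, with the two families of coefficients merged by the exponent of $c$, is just a regrouping of the paper's formula (\ref{LinearFormula2}). The continuity/convexity/extension argument, the endpoint divergence via coprimality (at least one of $f_0,g_0$ and one of $f_d,g_d$ nonzero), the identification of the slopes as $d(d+1-2k)\equiv d^2+d \pmod{2d}$ in $[-(d^2+d),d^2+d]$, the constants lying in $\ord(H(x,y)^\times)$, and the breaks lying in the divisible group $\ord(K^\times)$ (hence at type II points) all match the paper and are correct.

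The one item you leave open --- the bound of $d+1$ on the number of distinct affine pieces --- is the only point of divergence, and your hesitation is well founded. The paper disposes of it with the sentence ``There are $d+1$ possible slopes,'' but this is a miscount: the admissible slopes are $d^2+d-2dk$ for $k=0,1,\dots,d+1$, which is $d+2$ values, exactly as you observe. Moreover all $d+2$ can occur. For $d=2$ take $F(X,Y)=X^2+XY+p^3Y^2$ and $G(X,Y)=p^3X^2+p^7XY+p^7Y^2$ over $\CC_p$ (coprime, since the roots of $F$ have valuations $0,3$ while those of $G$ have valuation $2$); on $[0,\infty]$ one gets $\ordRes_\varphi(\zeta_{D(0,|A|)})=R+\max(6t-12,\,2t,\,-2t,\,-6t-12)$ with $t=\ord(A)$, which has four affine pieces, not three. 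So the ``$d+1$'' in the statement should be ``$d+2$''; you should not try to prove the stated bound, since it is false. The slip is harmless for the rest of the paper: what is actually used downstream is that the number of pieces is uniformly bounded, that slopes are $\equiv d^2+d\pmod{2d}$ with absolute value at most $d^2+d$, and that consecutive slope differences divided by $2d$ lie in $\{1,\dots,d+1\}$ (which governs the break-point denominators in Theorem \ref{DegreeBoundTheorem}); all of these survive with the corrected count.
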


\begin{proof}
Fix $\gamma \in \GL_2(K)$ with $\gamma(0) = x$ and $\gamma(\infty) = y$.  
The action of $\GL_2(K)$ on $\PP^1_{\Berk}$ takes paths to paths, so $\gamma([0,\infty]) = [x,y]$. 
The type II points on $[0,\infty]$ are the points $\zeta_{|A|}$ corresponding to discs $D(0,|A|)$,  
as $A$ runs over elements of $K^{\times}$,    
and if we put $\mu_A = \left[ \! \begin{array}{cc} A & 0 \\ 0 & 1 \end{array} \! \right] \in \GL_2(K)$, 
then $\zeta_{|A|} = \mu_A(\zeta_G)$.  Now let $\gamma_A = \gamma \circ \mu_A$.  As $A$ varies, 
the type II points on $[x,y]$ are the points $\gamma(\zeta_{|A|}) = \gamma_A(\zeta_G)$, 
and for all $A, B \in K^{\times}$ we have 
\begin{equation*}
\rho(\gamma(\zeta_{|A|}),\gamma(\zeta_{|B|})) \ = \ |\,\ord(A) - \ord(B)| \ .
\end{equation*} 

Write 
\begin{eqnarray} 
F^{\gamma}(X,Y) & = & a_d X^d + a_{d-1} X^{d-1} Y + \cdots + a_0 Y^d \ , \label{FGg2} \\ 
G^{\gamma}(X,Y) & = & b_d X^d + b_{d-1} X^{d-1} Y + \cdots + b_0 Y^d \ . \notag
\end{eqnarray} 
Since $\varphi^{\gamma_A} = (\varphi^{\gamma})^{\mu_A}$ we have 
$\left[ \! \begin{array}{c} F^{\gamma_A}(X,Y) \\ G^{\gamma_A}(X,Y) \end{array} \! \right]         
              = \left[ \! \begin{array}{c} F^{\gamma}(AX,Y) \\ A \, G^{\gamma}(AX,Y)  \end{array} \! \right]$; thus  
\begin{eqnarray}
F^{\gamma_A}(X,Y)  & = & A^d a_d X^d + A^{d-1} a_{d-1} X^{d-1} Y + \cdots + a_0 Y^d \ , \label{FGg3} \\
G^{\gamma_A}(X,Y)  & = & A^{d+1} b_d X^d + A^d b_{d-1} X^{d-1} Y + \cdots + A b_0 Y^d \ . \notag
\end{eqnarray}
Put $Q_A = \gamma_A(\zeta_G)$ and write $t = \ord(A)$.  Since $\det(\gamma_A) =  A\det(\gamma)$, it follows from 
formula (\ref{KeyFormula}) that 
\begin{eqnarray} 
& & \ordRes_{\varphi}(Q_A) \ = \ \ordRes(\varphi^{\gamma_A}) \notag \\
           & &  \qquad = \ \ordRes(F^\gamma,G^\gamma) 
                       + \ (d^2 + d) \ord(A) \label{LinearFormula1} \\ 
           & &   \qquad \qquad     
           - 2d \min\big( \ord(a_0), \cdots, \ord(A^d a_d), \ord(A b_0) , \cdots , \ord(A^{d+1} b_d) \big)  \notag  \\
& &  \qquad  = \ \max\Big( \max_{0 \le \ell \le d} \big( (d^2 + d - 2 d \ell ) t + C_\ell \big), 
\max_{0 \le \ell \le d} \big((d^2 + d - 2 d(\ell + 1)) t + D_\ell)\big) \Big)  \ , \label{LinearFormula2}  
\end{eqnarray}
where  
$C_\ell = \ordRes(F^\gamma,G^\gamma) - 2d \, \ord(a_\ell)$,  
$D_\ell = \ordRes(F^\gamma,G^\gamma) - 2d \, \ord(b_\ell)$.

Now let $t$ vary over $\RR$.  Since the type II points $Q_A$ 
(which correspond to values of $t$ in the divisible group $\ord(K^{\times})$) 
are dense in $[x,y]$ for the path distance topology, 
we can use the right side of (\ref{LinearFormula2}) to extend $\ordRes_{\varphi}(\cdot)$ continuously to $[x,y]$, 
omitting any terms in (\ref{LinearFormula2}) for which $C_{\ell}$ or $D_{\ell}$ is $-\infty$ 
(such terms correspond to coefficients $a_\ell$ or $b_\ell$ which are $0$).   
Clearly the extension, being the maximum of finitely many affine functions of $t$, 
is piecewise affine and convex upwards.   Now suppose $d \ge 2$.
Since $F(X,Y)$ and $G(X,Y)$ have no common factors, the same is true for $F^{\gamma}(X,Y)$ and $G^{\gamma}(X,Y)$;  
it follows that at least one of $a_0, b_0$ is nonzero, and at least one of $a_d, b_d$ is nonzero.  
The slopes of the corresponding affine functions are are $d^2 + d$, $d^2-d$, $-(d^2-d)$ and $-(d^2 + d)$;  
since $d \ge 2$ these are all nonzero.  Thus at least one of the affine functions
in (\ref{LinearFormula2}) has positive slope and at least one has negative slope;    
this means the extended function $\ordRes_{\varphi}(\cdot)$
is finite on $[x,y] \cap \HH_{\Berk}$, and is $\infty$ at $x$ and $y$.   

Let $H$ be a field of definition for $\varphi$. 
Then  $F(X,Y)$, $G(X,Y)$  can be taken to be rational over $H$, and $\gamma$ can be taken to be rational over $H(x,y)$; 
if this is the case then $a_0, \ldots, a_d, b_0, \ldots, b_d$ and $\det(\gamma)$
will also be rational over $H(x,y)$.  
Comparing (\ref{LinearFormula1})
and (\ref{LinearFormula2}) we see that each affine piece of $\ordRes_{\varphi}(\cdot)$
has the form $m t + c$, where $m$ is an integer 
in the range $-d^2 -d \le m \le d^2 + d$ satisfying $m \equiv d^2 + d \pmod{2d}$, 
and $c$ belongs to the value group $\ord(H(x,y)^{\times})$. 
If two of the affine functions in (\ref{LinearFormula2}) have the same slope, 
only one will contribute to $\ordRes_{\varphi}(\cdot)$.  
There are $d+1$ possible slopes, so $\ordRes_{\varphi}(\cdot)$
has at most $d+1$ affine pieces on $[x,y]$.  

Finally, suppose $m_i t + c_i$ and $m_j t +c_j$ are consecutive affine pieces.  Their intersection occurs at 
\begin{equation} \label{IntRat}
t \ = \ t_{ij} \ = \ - \frac{c_j - c_i}{m_j - m_i} 
\end{equation} 
which belongs to $\ord(K^{\times})$;  thus the breaks between affine pieces occur at type II points.  
Indeed, $m = m_j - m_i$ is a nonzero integer satisfying $m \equiv 0 \pmod{2d}$, 
with $|m| \le 2d(d + 1)$;  and that by (\ref{LinearFormula1}) and (\ref{LinearFormula2}) 
 $c_j- c_i \in 2d \cdot \ord(H(x,y)^{\times})$.  
Thus $t_{ij}$ actually belongs to the divisible hull of $\ord(H(x,y)^{\times})$, 
with denominator taken from $\{1,2, \ldots, d+1\}$.      
\end{proof}

\begin{proposition} \label{ExtensionCor} 
There is a unique extension of $\ordRes_{\varphi}(\cdot)$ on type {\rm II} points to a function 
$\ordRes_{\varphi} : \PP^1_{\Berk} \rightarrow [0,\infty]$  
which agrees with the one given in {\rm Lemma \ref{PathConvexityLemma}}
on paths with endpoints in $\PP^1(K)$, and is continuous on $\HH_\Berk$ for the strong topology.  
When $d = 1$, the extension is continuous with respect to the strong topology 
at each $x \in \HH_\Berk$,  
and at each $x \in \PP^1(K)$ where $\ordRes_\varphi(x) = \infty$. 
When $d \ge 2$, it is continuous with respect to 
the strong topology at each $x \in \PP^1_{\Berk}$.    
The extension is finite on $\HH_\Berk$,
and when $d \ge 2$ it takes the value $\infty$ at each $x \in \PP^1(K)$.

On each path in $\PP^1_{\Berk}$,  
the extension is convex upwards and piecewise affine with respect to $\rho(x,y);$  
moreover, the slope of each affine piece is 
an integer $m \equiv d^2 + d \pmod{2d}$ with $-d^2 -d \le m \le d^2 + d,$ 
the breaks between affine pieces occur at type {\rm II} points$,$ 
and there are at most $d+1$ distinct affine pieces. 
In particular, on $\HH_\Berk$, the extension is Lipschitz continuous with respect to $\rho(x,y)$   
with Lipschitz constant $d^2+d$. 
\end{proposition}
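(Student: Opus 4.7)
The plan is to construct the extension path-by-path using Lemma \ref{PathConvexityLemma}, glue along overlaps, and verify the continuity and structural claims using the Lipschitz bound on $\HH_\Berk$ together with the convexity structure on paths.

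For each pair of distinct $x, y \in \PP^1(K)$, Lemma \ref{PathConvexityLemma} supplies a continuous piecewise-affine extension of $\ordRes_\varphi$ to $[x,y]$. Two such path-wise extensions agree on any common subpath $\sigma$, because both are continuous on $\sigma$ for the $\rho$-topology and agree on the dense set of type II points, where the value of $\ordRes_\varphi$ is intrinsic. Every point of $\PP^1_\Berk$ lies on such a path (a type I point serves as its own endpoint; for $\xi \in \HH_\Berk$ one picks two points of $\PP^1(K)$ in different components of $\PP^1_\Berk \setminus \{\xi\}$, which exist by density of $\PP^1(K)$ and the tree structure), so this yields a well-defined global function, unique among extensions that are continuous along every such path.

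Given $\xi_1, \xi_2 \in \HH_\Berk$, the tree structure lets me choose $a, b \in \PP^1(K)$ with $[\xi_1, \xi_2] \subset [a, b]$, and Lemma \ref{PathConvexityLemma} applied on $[a, b]$ yields
\[
|\ordRes_\varphi(\xi_1) - \ordRes_\varphi(\xi_2)| \le (d^2+d)\,\rho(\xi_1, \xi_2),
\]
giving Lipschitz and hence strong continuity on $\HH_\Berk$, and transferring the piecewise affine structure, convexity, slope bounds, at most $d+1$ pieces, and breaks at type II points from $[a,b]$ to $[\xi_1, \xi_2]$. Finiteness on $\HH_\Berk$ and the value $\infty$ at each type I point when $d \ge 2$ are inherited directly from Lemma \ref{PathConvexityLemma}.

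The subtle step is strong continuity at a type I point $x$ when $d \ge 2$: at such an $x$ the strong-open and weak-open neighborhoods coincide, so I must produce, for each $N > 0$, a weak-open $U \ni x$ on which $\ordRes_\varphi > N$. On the path $[\zeta_G, x]$ the function is convex up, piecewise affine, and tends to $\infty$ at $x$, so its slopes (drawn from a finite set and nondecreasing in $\rho$) stabilize past some break to a positive value $m_x > 0$. I choose a type II point $P$ past this stable region with $\ordRes_\varphi(P) > N$, and take $U$ to be the component of $\PP^1_\Berk \setminus \{P\}$ containing $x$. For any $\zeta \in U$, the geodesic $[\zeta_G, \zeta]$ must pass through $P$; on $[\zeta_G, P] \subset [\zeta_G, x]$ the right-slope at $P$ equals $m_x$, and piecewise-affine convexity along $[\zeta_G, \zeta]$ forces the slope on $[P, \zeta]$ to be at least $m_x > 0$, whence $\ordRes_\varphi(\zeta) \ge \ordRes_\varphi(P) > N$. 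The same argument handles the $d=1$ case at each $x \in \PP^1(K)$ with $\ordRes_\varphi(x) = \infty$. The main obstacle is this final step, since $U$ is a tree-theoretic neighborhood not contained in any single path from $\zeta_G$; the key point is that piecewise-affine convexity holds along \emph{every} geodesic through $P$, so the left-slope $m_x$ forced by $[\zeta_G, P]$ must transfer into every outgoing direction leading into $U$, converting the one-dimensional path analysis of Lemma \ref{PathConvexityLemma} into control of $\ordRes_\varphi$ on a weak-open neighborhood of $x$.
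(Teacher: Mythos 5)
Your construction has a genuine gap at the type IV points, which is precisely the case the paper's proof is careful to treat separately. You assert that every point of $\PP^1_{\Berk}$ lies on a path with both endpoints in $\PP^1(K)$, arguing that for $\xi \in \HH_\Berk$ one can pick points of $\PP^1(K)$ in two different components of $\PP^1_{\Berk} \setminus \{\xi\}$. But a type IV point $\xi$ is an endpoint of the tree: its tangent space $T_\xi$ has exactly one element, so $\PP^1_{\Berk} \setminus \{\xi\}$ is connected and $\xi$ cannot be an interior point of any path; nor can it be an endpoint of a path $[a,b]$ with $a,b \in \PP^1(K)$, since it is not of type I. Consequently your path-by-path gluing never assigns a value to type IV points, and your Lipschitz step --- choosing $a,b \in \PP^1(K)$ with $[\xi_1,\xi_2] \subset [a,b]$ --- likewise fails whenever $\xi_1$ or $\xi_2$ is of type IV. Since the proposition asserts existence, uniqueness, finiteness, and strong continuity of the extension on all of $\HH_\Berk$ (which contains the type IV points), this is not a cosmetic omission. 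The repair is the one the paper uses: establish the Lipschitz bound $|\ordRes_\varphi(x)-\ordRes_\varphi(y)| \le (d^2+d)\rho(x,y)$ on the type II points (which do all lie on paths with type I endpoints), note that type II points are $\rho$-dense in $\HH_\Berk$ and every type IV point is at finite $\rho$-distance from $\zeta_G$, and extend by uniform continuity; the piecewise-affine and convexity statements on an arbitrary path are then obtained by exhausting its interior by segments with type II endpoints, using the uniform bound $d+1$ on the number of affine pieces.

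The rest of your argument tracks the paper's proof. The gluing of path-wise extensions via density of type II points, the transfer of the slope and break-point data, and in particular the continuity argument at type I points $x$ with $\ordRes_\varphi(x)=\infty$ --- cutting at a point $P$ on $[\zeta_G,x]$ beyond which the function is increasing, and using convexity along $[P,\zeta]$ for every $\zeta$ in the component of $\PP^1_{\Berk}\setminus\{P\}$ containing $x$ --- are essentially identical to the paper's. Note only that this last step also quietly invokes convexity of $\ordRes_\varphi$ on paths $[\zeta_G,\zeta]$ with $\zeta$ of type IV, so it too depends on first completing the extension to type IV points.
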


\begin{proof} 
Given two paths $[x_1,y_1]$, $[x_2,x_2]$ with endpoints in $\PP^1(K)$,  
the extensions of $\ordRes_\varphi(\cdot)$ to 
$[x_1,y_1]$ and $[x_2,x_2]$ given by Lemma \ref{PathConvexityLemma} are consistent on $[x_1,y_1] \cap [x_2,x_2]$, 
since type II points are dense in the intersection if it is nonempty,
and the extension to each path is continuous.  
Define $\ordRes_{\varphi}(\cdot)$
to be the extension given by Lemma \ref{PathConvexityLemma}  
on each path $[x,y]$ with endpoints in $\PP^1(K)$. 
In this way, we obtain a well-defined function $\ordRes_\varphi(\cdot)$
on the points of type I, II, and III in $\PP^1_{\Berk}$. 
When $d \ge 2$, Lemma \ref{PathConvexityLemma}
shows that $\ordRes_\varphi(x) = \infty$ for each $x \in \PP^1(K)$.  

We next show that there is a unique continuous extension of $\ordRes_{\varphi}(\cdot)$ to type IV points.
Since any pair of type II points belongs to a path with endpoints in $\PP^1(K)$,
Lemma \ref{PathConvexityLemma} shows that for all type II points $x, y$ we have 
\begin{equation*} 
|\, \ordRes_{\varphi}(x) - \ordRes_{\varphi}(y)| \ \le \ (d^2+d) \cdot \rho(x,y) \ .
\end{equation*} 
Since each point of IV is at finite logarithmic path distance from $\zeta_G$, 
and type II points are dense in $\HH_{\Berk}$ with respect to $\rho(x,y)$, 
there is a unique extension of $\ordRes_{\varphi}(\cdot)$ to $\HH_{\Berk}$ which is Lipschitz
continuous with respect to $\rho(x,y)$, with Lipschitz constant $d^2+d$.   Since
$\ordRes_{\varphi}(x) \ge 0$ on type II points,  $\ordRes_{\varphi}(z) \ge 0$
for all $z \in \PP^1_\Berk$.  

Since each segment $[u,v]$ with type II endpoints is contained in a path $[x,y]$ with type I endpoints,
the restriction of $\ordRes_\varphi(\cdot)$ to $[u,v]$ is piecewise affine 
and convex upwards with respect to the logarithmic path distance, with most $d+1$ affine pieces, 
and slopes $m \equiv d^2 + d \pmod{2d}$ where $-d^2-d \le m \le d^2 + d$;
the breaks between affine pieces occur at type II points.    
These same properties must hold for $\ordRes_\varphi(\cdot)$ on 
an arbitrary path $[z,w]$ in $\PP^1_\Berk$, 
since the interior of the path can be exhausted by an increasing sequence of segments 
with type II endpoints, and the number of affine pieces on each such segment is uniformly bounded.    
 
To complete the proof, it suffices 
to show that $\ordRes_\varphi(\cdot)$ is continuous with respect to the strong topology 
at each type I point $x$ where $\ordRes_\varphi(x) = \infty$.  Fix $y \in \PP^1(K)$ with $y \ne x$,
and consider the path $[x,y]$.  For each $P \in [x,y] \cap \HH_{\Berk}$, let $U_x(P)$ be the component
of $\PP^1_{\Berk} \backslash \{P\}$ containing $x$.  As $P \rightarrow x$, the 
sets $U_x(P)$ form a basis for the neighborhoods of $x$ in the strong topology.  We claim that
for each $M \in \RR$, there is a $P_M$ such that $\ordRes_{\varphi}(z) > M$ 
for all $z \in U_x(P_M)$.  To see this, note that since $\ordRes_{\varphi}(P)$ increases
to $\infty$ as $P \rightarrow x$ along $[x,y]$, there is a $P_M$
such that $\ordRes_{\varphi}(P_M) > M$ and $\ordRes_{\varphi}(\cdot)$ is increasing on $[P_M,x]$.  
Let $z \in U_x(P_M)$ be arbitrary.  The path  $[P_M,z]$ shares an initial segment with $[P_M,x]$, 
and $\ordRes_{\varphi}(\cdot)$ is increasing along that initial segment.  Since $\ordRes_{\varphi}(\cdot)$ 
is convex up on $[P_M,z]$, we have $\ordRes_{\varphi}(z) > \ordRes_{\varphi}(P_M) > M$.  
\end{proof}

For each $Q \in \PP^1_{\Berk}$, we call paths $[Q,x]$ and $[Q,y]$ emanating from $Q$ {\em equivalent} 
if they share an initial segment.  The {\em tangent space} $T_Q$ is the set of equivalence classes
of paths emanating from $Q$;  these classes are called {\em directions}.  
The directions at $Q$ are in $1-1$ correspondence
with the components of $\PP^1_{\Berk} \backslash \{Q\}$.  
If $Q$ is of type I or IV, $T_Q$ has one element;  if $Q$ is of type III, $T_Q$ has two elements;  
and if $Q$ is of type II, $T_Q$ is infinite.  Given $\beta \ne Q$, 
we will write $\vv_\beta \in T_Q$ for the direction containing $[Q,\beta]$,
or $\vv_{Q,\beta}$ if is necessary to specify $Q$.   

Recall that $\tk = \cO/\fM$ is the residue field of $K$.
When $Q = \zeta_G$, the components of $\PP^1_{\Berk} \backslash \{\zeta_Q\}$ correspond to 
elements of $\PP^1(\tk)$;  thus the directions in $T_{\zeta_G}$ are $\vv_{\infty}$
and the $\vv_{\beta}$ for $\beta \in \cO$,  where $\vv_{\beta_1} = \vv_{\beta_2}$
iff $\beta_1 \equiv \beta_2 \pmod{\fM}$.  For an arbitrary type II point $Q$, we can write 
$Q = \gamma(\zeta_G)$ for some $\gamma \in \GL_2(K)$;  
since $\gamma$ takes paths to paths, it induces a $1-1$ correspondence 
$\gamma_* : T_{\zeta_G} \rightarrow T_Q$ with $\gamma_*(\vv_{\beta}) = \vv_{\gamma(\beta)} \in T_Q$. 
Hence the directions in $T_Q$ are $\vv_{\gamma(\infty)}$ 
and the $\vv_{\gamma(\beta)}$ for $\beta \in \cO$, 
where again $\vv_{\gamma(\beta_1)} = \vv_{\gamma(\beta_2)}$ iff $\beta_1 \equiv \beta_2 \pmod{\fM}$.

\smallskip
We will say $\ordRes_{\varphi}(\cdot)$ 
is {\em locally decreasing} (resp. {\em locally constant}, resp. {\em increasing}) 
in a direction $\vv$ at $Q$  
if it is initially decreasing (resp. constant, resp. increasing) along $[Q,\beta]$ 
for some (hence every) path with $\vv = \vv_{\beta}$.      
A crucial observation is that since $\ordRes_{\varphi}(\cdot)$ is convex upward, 
at each point $Q$ there can be at most one direction in which  $\ordRes_{\varphi}(\cdot)$ is locally decreasing:
thus, $\ordRes_\varphi(\cdot)$ satisfies the principle of steepest descent. 
Likewise, if it is locally constant in some direction at $Q$, it must be locally constant 
or increasing in every other direction.  
If it is locally increasing in some direction at $Q$, by convexity 
it must be increasing along every path $[Q,\beta]$ in that direction, 
so we do not distinguish between {\em locally increasing} and {\em increasing}.  

When $Q$ is of type II, we will now give necessary and sufficient conditions for $\ordRes_{\varphi}(\cdot)$ 
to be locally decreasing, locally constant, or increasing in a given direction.  
Suppose $Q = \gamma(\zeta_G)$ where $\gamma \in \GL_2(K)$;  
let $(F^{\gamma},G^{\gamma})$ be the representation of $\varphi^{\gamma}$ from (\ref{FGg1}).  By replacing
$\gamma$ with $c\gamma$ for an appropriate $c \in K^{\times}$ (which does not change action of $\gamma$) we can 
assume $(F^{\gamma},G^{\gamma})$ is normalized.  As in (\ref{FGg2}), write 
\begin{eqnarray} 
F^{\gamma}(X,Y) & = & a_d X^d + a_{d-1} X^{d-1} Y + \cdots + a_0 Y^d \ , \label{FGg2b} \\ 
G^{\gamma}(X,Y) & = & b_d X^d + b_{d-1} X^{d-1} Y + \cdots + b_0 Y^d \ . \notag
\end{eqnarray} 

For each $\beta \in \cO$, 
the map $\nu^\beta := \left[ \begin{array}{cc} 1 & \beta \\ 0 & 1 \end{array} \right] \in \GL_2(\cO)$
stabilizes $\zeta_G$ and takes the path $[0,\infty]$ to $[\beta,\infty]$.  
Put $\gamma^\beta = \gamma \circ \nu^\beta$;  
then $\gamma^\beta(\zeta_G) = Q$ 
and since $\varphi^{\gamma^\beta} = (\varphi^{\gamma})^{\nu^\beta}$  it follows that 
the pair $(F^{\gamma^\beta},G^{\gamma^\beta})$ given by 
\begin{equation*} 
\left[ \! \begin{array}{c} F^{\gamma^\beta}(X,Y) \\ G^{\gamma^{\beta}}(X,Y) \end{array} \! \right]  = 
         \Adj(\nu^{\beta}) \! \circ \! \left[ \! \begin{array}{c} F^{\gamma} 
                         \\ G^{\gamma} \end{array} \! \right] \! \circ \! \nu^{\beta} \! \circ \! 
                               \left[ \! \begin{array}{c} X \\ Y \end{array} \! \right]
          = \left[ \! \begin{array}{c} F^{\gamma}(X+\beta Y,Y) - \beta G^{\gamma}(X + \beta Y, Y) 
                 \\ G^{\gamma}(X + \beta Y, Y) \! \end{array} \right] 
\end{equation*} 
is another representation of $\varphi$ at $Q$.  It is normalized since $\nu^{\beta} \in \GL_2(\cO)$. 
Write 
\begin{eqnarray} 
F^{\gamma^{\beta}}(X,Y) & = & a_d(\beta) X^d + a_{d-1}(\beta) X^{d-1} Y + \cdots + a_0(\beta) Y^d \ , \label{FGg5} \\ 
G^{\gamma^{\beta}}(X,Y) & = & b_d(\beta) X^d + b_{d-1}(\beta) X^{d-1} Y + \cdots + b_0(\beta) Y^d \ . \notag
\end{eqnarray} 

\begin{lemma} \label{CriterionLemma}
Let $Q$ be a type {\rm II} point;  suppose $Q = \gamma(\zeta_G)$ where $\gamma \in \GL_2(K)$ is such that
$(F^\gamma,G^\gamma)$ is normalized.  Then for each direction $\vv \in T_Q$

\smallskip
\noindent{\quad $(A)$}  $\ordRes_{\varphi}(\cdot)$ is locally decreasing in the direction $\vv$ if and only if 
\begin{equation*}
\text{$\vv = \vv_{Q,\gamma(\infty)}$ and \quad} \left\{ \begin{array}{l} 
             \text{$\ord(a_\ell) > 0$ when $(d+1)/2 \le \ell \le d$ and } \\
             \text{$\ord(b_\ell) > 0$ when $(d-1)/2 \le \ell \le d$\ , } 
         \end{array} \right.
\end{equation*}
\qquad \quad or for some $\beta \in \cO$,  
\begin{equation*}
\text{\quad \ $\vv = \vv_{Q,\gamma(\beta)}$ \ and \quad} 
\left\{ \begin{array}{l} 
             \text{$\ord(a_\ell(\beta)) > 0$ when $0 \le \ell \le (d+1)/2$ and} \\
             \text{$\ord(b_\ell(\beta)) > 0$ when $0 \le \ell \le (d-1)/2$\ . } 
         \end{array} \right.
\end{equation*}

\noindent{\quad $(B)$} $\ordRes_{\varphi}(\cdot)$ 
is locally constant in the direction $\vv$ if and only if $d$ is odd and 
\begin{equation*}
\text{$\vv = \vv_{Q,\gamma(\infty)}$ and \quad} \left\{ \begin{array}{l} 
             \text{$\ord(a_{(d+1)/2}) = 0$ or $\ord(b_{(d-1)/2} ) = 0$, and} \\
             \text{$\ord(a_\ell) > 0$ when $(d+1)/2 < \ell \le d$, and} \\
             \text{$\ord(b_\ell) > 0$ when $(d-1)/2 < \ell \le d$ \ ,} 
         \end{array} \right.
\end{equation*}
\qquad \quad or $d$ is odd and for some $\beta \in \cO$, 
\begin{equation*}
\text{\qquad \quad $\vv = \vv_{Q,\gamma(\beta)}$ \ and \quad} \left\{ \begin{array}{l} 
             \text{$\ord(a_{(d+1)/2}(\beta)) = 0$ or $\ord(b_{(d-1)/2}(\beta)) = 0$, and } \\
             \text{$\ord(a_\ell(\beta)) > 0$ when $0 \le \ell < (d+1)/2$, and} \\
             \text{$\ord(b_\ell(\beta)) > 0$ when $0 \le \ell < (d-1)/2$ \ .} 
         \end{array} \right.
\end{equation*}

\noindent{\quad $(C)$}  $\ordRes_{\varphi}(\cdot)$ is increasing in the direction $\vv$, otherwise.
\end{lemma}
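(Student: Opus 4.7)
The plan is to read off the local behavior of $\ordRes_\varphi(\cdot)$ at $Q$ directly from formula (\ref{LinearFormula2}) of Lemma \ref{PathConvexityLemma}. For a fixed $\gamma$ with $\gamma(\zeta_G)=Q$ and $(F^\gamma,G^\gamma)$ normalized, that formula realizes $\ordRes_\varphi$ along the path $[\gamma(\infty),\gamma(0)]$ as the maximum of the $2(d{+}1)$ affine functions
\[
L_{a,\ell}(t)=(d^2+d-2d\ell)t+(R-2d\ord(a_\ell)),\qquad
L_{b,\ell}(t)=(d^2-d-2d\ell)t+(R-2d\ord(b_\ell)),
\]
where $t=\ord(A)$, $Q$ sits at $t=0$, and $R=\ordRes(F^\gamma,G^\gamma)=\ordRes_\varphi(Q)$ because $(F^\gamma,G^\gamma)$ is normalized. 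Call $L_{a,\ell}$ (resp.\ $L_{b,\ell}$) \emph{active at $Q$} if its value at $t=0$ equals $R$; this happens precisely when $\ord(a_\ell)=0$ (resp.\ $\ord(b_\ell)=0$). The one-sided $t$-slopes of the max at $t=0$ are then the maximum and the minimum of the slopes of the active pieces.

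\smallskip
First I would treat the direction $\vv_{Q,\gamma(\infty)}$. Moving from $Q$ toward $\gamma(\infty)$ corresponds to $t$ \emph{decreasing}, so the slope with respect to logarithmic path distance is $-m_-$, where $m_-$ is the minimum slope among the active pieces. The direction is therefore locally decreasing iff $m_->0$, i.e.\ every active piece has strictly positive slope; this means $\ord(a_\ell)>0$ whenever $d^2+d-2d\ell\le 0$ (that is, $\ell\ge(d+1)/2$) and $\ord(b_\ell)>0$ whenever $d^2-d-2d\ell\le 0$ (that is, $\ell\ge(d-1)/2$), which is the $\gamma(\infty)$-clause of (A). It is locally constant iff $m_-=0$, which requires that some active piece have slope exactly $0$; this forces $d$ odd, with the offending piece being $L_{a,(d+1)/2}$ or $L_{b,(d-1)/2}$, while every strictly negative-slope piece must be inactive. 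This is the $\gamma(\infty)$-clause of (B).

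\smallskip
Next I would handle the directions $\vv_{Q,\gamma(\beta)}$ for $\beta\in\cO$ by the substitution $\gamma^\beta=\gamma\circ\nu^\beta$. Because $\nu^\beta\in\GL_2(\cO)$ stabilizes $\zeta_G$, we still have $\gamma^\beta(\zeta_G)=Q$ and $(F^{\gamma^\beta},G^{\gamma^\beta})$ is normalized with coefficients $a_\ell(\beta),b_\ell(\beta)$, and $\gamma^\beta(0)=\gamma(\beta)$, $\gamma^\beta(\infty)=\gamma(\infty)$. Applying Step 1 with $\gamma^\beta$ in place of $\gamma$, the direction $\vv_{Q,\gamma(\beta)}$ is now $\vv_{Q,\gamma^\beta(0)}$, which corresponds to $t$ \emph{increasing}; the relevant slope is $m_+$, the maximum slope among pieces active for $\gamma^\beta$. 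The conditions $m_+<0$ and $m_+=0$ translate at once into the stated inequalities on $\ord(a_\ell(\beta))$ and $\ord(b_\ell(\beta))$, giving the $\gamma(\beta)$-clauses of (A) and (B). As recalled just before the lemma, every direction in $T_Q$ has the form $\vv_{Q,\gamma(\infty)}$ or $\vv_{Q,\gamma(\beta)}$ for some $\beta\in\cO$ (well-defined modulo $\fM$), so the classification is exhaustive; convexity of $\ordRes_\varphi$ then yields (C) as the remaining alternative.

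\smallskip
The main bookkeeping obstacle is the sign convention that flips the $t$-slope to a $\rho$-slope in the $\gamma(\infty)$ direction (hence the swap between ``max'' and ``min'' of active slopes, and the reversed inequalities on $\ell$ between the $\gamma(\infty)$ and $\gamma(\beta)$ cases), together with the parity-of-$d$ boundary where an active piece can have slope exactly zero—this is precisely what forces $d$ odd in (B) and what distinguishes the weak inequalities in (A) from the strict ones. Everything else is the routine identification of active pieces with indices $\ell$ at which $a_\ell$ or $b_\ell$ has order zero.
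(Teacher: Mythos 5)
Your proposal is correct and follows essentially the same route as the paper: both read the local behavior off formula (\ref{LinearFormula2}) for the normalized pair $(F^\gamma,G^\gamma)$, identify the pieces achieving the maximum at $t=0$ (your ``active'' pieces are exactly those with $\ord(a_\ell)=0$ or $\ord(b_\ell)=0$), examine the one-sided slopes for $t<0$ (toward $\gamma(\infty)$) and $t>0$, and transfer to general directions $\vv_{\gamma(\beta)}$ via the conjugation by $\nu^\beta$. The only difference is cosmetic: you package the sign analysis as the max/min of active slopes, where the paper phrases it as ``every affine function with nonnegative (resp.\ nonpositive) slope has negative constant term.''
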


\begin{proof} Note that $\gamma([0,\zeta_G]) = [\gamma(0),Q]$ and $\gamma([\zeta_G, \infty]) = [Q,\gamma(\infty)]$.
We will prove the criteria for the directions $\vv_{\gamma(0)}$ and $\vv_{\gamma(\infty)}$ 
using formula (\ref{LinearFormula2}) and the normalized representation $(F^{\gamma},G^{\gamma})$.   
Since $\gamma^{\beta}([0,\zeta_G]) = [\gamma(\beta),Q]$, 
the criteria for the  directions $\vv_{\gamma(\beta)}$ with arbitrary $\beta \in \cO$   
follow by applying the same arguments to $(F^{\gamma^\beta},G^{\gamma^\beta})$. 
  
Using the same notation as in formulas (\ref{LinearFormula1}) and (\ref{LinearFormula2}),  
for each $A \in K^{\times}$ put $Q_A = \gamma(\zeta_{|A|}) = \gamma_A(\zeta_G)$.  
Making the constants $C_\ell$, $D_\ell$ in formula (\ref{LinearFormula2}) explicit, we have  
\begin{eqnarray}
& & \ordRes_{\varphi}(Q_A) - \ordRes_{\varphi}(Q) \ = \  
 \max\Big( \max_{0 \le \ell \le d} \big( (d^2 + d - 2 d \ell ) t - 2 d \, \ord(a_\ell) \big), \notag \\
& & \qquad \qquad \qquad \qquad \qquad \qquad \qquad \qquad  
\max_{0 \le \ell \le d} \big((d^2 + d - 2 d(\ell + 1)) t -  2d \, \ord(b_\ell)\big) \Big)  \ , \label{LinearFormula3}  
\end{eqnarray}
where $t = \ord(A)$.  
By assumption  some $\ord(a_\ell)$ or $\ord(b_\ell)$ is $0$, and $\ord(a_\ell), \ord(b_\ell) \ge 0$ for each $\ell$.
When $t = 0$ we have $Q_A = Q$ and both sides of (\ref{LinearFormula3}) are $0$.  

Values of $t > 0$ correspond to points in the direction $\vv_{\gamma(0)}$ at $Q$.
For small positive $t$, the right side of (\ref{LinearFormula3}) will be negative if
and only if each of the affine functions in (\ref{LinearFormula3}) with a nonnegative slope 
has a negative constant term.
Hence $\ordRes_{\varphi}(\cdot)$ is locally decreasing
in the direction $\vv_{\gamma(0)} \in T_Q$ if and only if 
$\ord(a_\ell) > 0$ for each $\ell$ such that $d^2 + d - 2 d \ell \ge 0$, 
and $\ord(b_\ell) > 0$ for each $\ell$ such that 
$d^2 + d - 2d(\ell + 1) \ge 0$.   
Similarly $\ordRes_{\varphi}(\cdot)$ is locally constant in the direction $\vv_{\gamma(0)}$ if and only if 
one of the affine functions with slope $0$ has a constant term $0$, and each of the affine functions
with positive slope has negative constant term.  This happens if and only if  
$d$ is odd, either $\ord(a_{(d+1)/2}) = 0$ or $\ord(b_{(d-1)/2}) = 0$,  
$\ord(a_\ell) > 0$ for each $\ell$ such that $d^2 + d - 2 d \ell  > 0$, 
and $\ord(b_\ell) > 0$ for each $\ell$ such that 
$d^2 + d - 2d(\ell + 1) >  0$.  

Values of $t < 0$ correspond to points in the direction $\vv_{\gamma(\infty)}$ at $Q$.
For small negative $t$, the right side of (\ref{LinearFormula3}) will be negative if
and only if each of the affine functions in (\ref{LinearFormula3}) with a nonpositive slope 
has a negative constant term.
Hence $\ordRes_{\varphi}(\cdot)$ is locally decreasing
in the direction $\vv_{\gamma(\infty)}$ if and only if 
$\ord(a_\ell) > 0$ for each $\ell$ such that $d^2 + d - 2 d \ell \le 0$, 
and $\ord(b_\ell) > 0$ for each $\ell$ such that 
$d^2 + d - 2d(\ell + 1) \le 0$.   
Similarly $\ordRes_{\varphi}(\cdot)$ is locally constant in the direction $\vv_{\gamma(\infty)}$ if and only if 
$d$ is odd, either $\ord(a_{(d+1)/2}) = 0$ or $\ord(b_{(d-1)/2}) = 0$,  
$\ord(a_\ell) > 0$ for each $\ell$ such that $d^2 + d - 2 d \ell  < 0$, 
and $\ord(b_\ell) > 0$ for each $\ell$ such that 
$d^2 + d - 2d(\ell + 1) <  0$.  
\end{proof} 

\begin{lemma} \label{TwoDirsCor}
If $d \ge 2$ is even, $\ordRes_{\varphi}(\cdot)$ is never locally constant.  
If $d \ge 3$ is odd,  
then at each $Q \in \PP^1_{\Berk}$, 
there are at most two directions in $T_Q$ where $\ordRes_{\varphi}(\cdot)$ is locally constant. 
\end{lemma}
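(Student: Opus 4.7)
The plan is to treat the cases of even and odd $d$ separately, using Lemma \ref{CriterionLemma} as the principal input. For $d$ even, the conclusion is immediate: by Proposition \ref{ExtensionCor}, every affine piece of $\ordRes_{\varphi}(\cdot)$ has slope $m \equiv d^2+d \equiv d \pmod{2d}$, and since $0<d<2d$ this is nonzero, ruling out slope-zero directions at any point of $\PP^1_{\Berk}$. (This is also visible from Lemma \ref{CriterionLemma}(B), which explicitly requires $d$ to be odd.)

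Assume now that $d \ge 3$ is odd. At a type I, III, or IV point the tangent space $T_Q$ has at most two elements, so the claim is trivial. The essential case is therefore a type II point $Q$, which I treat by contradiction: suppose three distinct directions $\vv_1, \vv_2, \vv_3 \in T_Q$ were all locally constant. By Proposition \ref{GL_2Prop}(A) the stabilizer of $Q$ in $\GL_2(K)$ is $\gamma \cdot K^{\times}\GL_2(\cO) \cdot \gamma^{-1}$, and it acts on $T_Q \cong \PP^1(\tk)$ through the triply transitive action of $\GL_2(\tk)$ on $\PP^1(\tk)$. Composing $\gamma$ on the right with a suitable $\delta \in \GL_2(\cO)$ preserves the normalization of $(F^\gamma,G^\gamma)$ (since both $\delta$ and $\Adj(\delta)$ have entries in $\cO$), so after such a replacement I may assume $Q = \gamma(\zeta_G)$, $(F^\gamma,G^\gamma)$ is normalized, and
\[
\vv_1 = \vv_{\gamma(\infty)}, \qquad \vv_2 = \vv_{\gamma(0)}, \qquad \vv_3 = \vv_{\gamma(1)}.
\]

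Applying Lemma \ref{CriterionLemma}(B) simultaneously to $\vv_{\gamma(\infty)}$ and $\vv_{\gamma(0)}$ forces $\ord(a_\ell) > 0$ for every $\ell \ne (d+1)/2$ and $\ord(b_\ell) > 0$ for every $\ell \ne (d-1)/2$; the remaining ``or'' clauses are then equivalent to the normalization requirement that at least one of $a_{(d+1)/2}, b_{(d-1)/2}$ be a unit. Reducing modulo $\fM$ gives the monomial form
\[
\tF^{\gamma} \equiv c_1\,X^{(d+1)/2}\,Y^{(d-1)/2}, \qquad \tG^{\gamma} \equiv c_2\,X^{(d-1)/2}\,Y^{(d+1)/2},
\]
with $(c_1,c_2) \in \tk^2 \setminus \{(0,0)\}$. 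Local constancy in direction $\vv_{\gamma(1)}$, read off from Lemma \ref{CriterionLemma}(B) after the coordinate change $\nu^1$, requires in particular that $X^{(d+1)/2}$ divide $\tF^{\gamma\nu^1}$. But substituting the monomial forms into $\tF^{\gamma\nu^1}(X,Y) = \tF^\gamma(X+Y,Y) - \tG^\gamma(X+Y,Y)$ and factoring yields
\[
\tF^{\gamma\nu^1}(X,Y) \ = \ (X+Y)^{(d-1)/2}\,Y^{(d-1)/2}\,\bigl(c_1 X + (c_1-c_2)Y\bigr),
\]
whose $X$-order at $X=0$ is at most $1$: the first two factors evaluate to nonzero polynomials in $Y$ at $X=0$, and the linear factor contributes order $0$ (if $c_1 \ne c_2$) or order $1$ (if $c_1=c_2 \ne 0$). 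For $d \ge 3$ this bound is strictly less than $(d+1)/2$, contradicting the required divisibility, so no third locally constant direction can exist.

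The main obstacle is the triple-transitivity reduction: one must verify that the transitive action of $\GL_2(\tk)$ on ordered triples in $\PP^1(\tk)$ is realized by an element of $\GL_2(\cO)$ that simultaneously preserves the normalization of $(F^\gamma, G^\gamma)$. Once that is in place, the monomial shape of $\tF^\gamma$ and $\tG^\gamma$ drops out of Lemma \ref{CriterionLemma}(B) automatically, and the contradiction reduces to the one-line factorization above.
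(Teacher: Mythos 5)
Your proof is correct and follows essentially the same route as the paper: both reduce to type II points, apply Lemma \ref{CriterionLemma}(B) to two constant directions to force the monomial reductions $\tF^{\gamma} \equiv c_1 X^{(d+1)/2}Y^{(d-1)/2}$, $\tG^{\gamma} \equiv c_2 X^{(d-1)/2}Y^{(d+1)/2}$, and then show the criterion fails in any further direction. The only cosmetic difference is that the paper tests an arbitrary third direction $\vv_{\gamma(\beta)}$ with $\beta \not\equiv 0 \pmod{\fM}$ by reading off the single coefficient $a_0(\beta)$ or $b_0(\beta)$, whereas you normalize to $\beta = 1$ via triple transitivity and factor the full reduction; both computations are the same in substance.
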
 

\begin{proof}  
If $d \ge 2$ is even, then on any path the slope of each affine piece of 
$\ordRes_\varphi(\cdot)$ is an integer $m \equiv d^2 + d \pmod {2d}$, hence is nonzero.  

Suppose $d \ge 3$ is odd.  
If $Q \in \PP^1_{\Berk}$ is of type I, III, or IV then there are are at most two directions in $T_Q$,
so trivially there are at most two directions in $T_Q$ 
in which $\ordRes_{\varphi}(\cdot)$ is locally constant.   
Let $Q$ be a type II point with at least two distinct directions 
where $\ordRes_{\varphi}(\cdot)$ is locally constant,  say $\vv_\alpha$ and $\vv_\beta$.
Take any $\gamma \in \GL_2(K)$ with $Q = \gamma(\zeta_G)$.  After replacing $\gamma$ with $\gamma \tau$
for a suitable $\tau \in \GL_2(\cO)$,  we can assume that $\vv_{\alpha} = \vv_{\gamma(0)}$ 
and $\vv_{\beta} = \vv_{\gamma(\infty)}$. Also, after replacing $\gamma$ with $c \gamma$ 
for a suitable $c \in K^{\times}$, we can assume that $(F^{\gamma},G^{\gamma})$ 
is a normalized representation of $\varphi$.  
Write $F^{\gamma}(X,Y) = a_d X^d + a_{d-1} X^{d-1} Y + \cdots + a_0 Y^d$,  
$G^{\gamma}(X,Y)  = b_d X^d + b_{d-1} X^{d-1} Y + \cdots + b_0 Y^d$.
By Lemma \ref{CriterionLemma}(B), if we put  
$D = (d+1)/2$ and $E = (d-1)/2$, 
then  $\ord(a_\ell) > 0$ for all $\ell \ne D$, $\ord(b_\ell) > 0$ for all $\ell \ne E$, 
and either $\ord(a_D) = 0$ or $\ord(b_E) = 0$.  Since $d \ge 3$, we have $D, E \ge 1$.

First suppose $\ord(b_E) = 0$;  then $G^{\gamma}(X,Y) \equiv b_E X^E Y^{d-E} \pmod{\fM}$,   
so for each $\beta \in \cO$  
\begin{equation*} 
G^{\gamma^\beta}(X,Y) \ := \ G^{\gamma}(X+\beta Y,Y) \ \equiv \  b_E(X+\beta)^E Y^{d-E} \!\!\! \pmod{\fM} \ .
\end{equation*}
Comparing this with (\ref{FGg5}) shows $b_0(\beta) \equiv b_E \beta^E \pmod{\fM}$.  
If $\beta \not\equiv 0 \ \pmod{\fM}$, this means $\ord(b_0(\beta)) = 0$, so the criterion
in Lemma \ref{CriterionLemma}(B) is not met for the direction $\vv_{\gamma(\beta)}$.  Thus 
$\vv_{\gamma(0)}$ and $\vv_{\gamma(\infty)}$ are the only directions 
in which $\ordRes_{\varphi}(\cdot)$ is locally constant.  

Next suppose $\ord(b_E) > 0$, so necessarily $\ord(a_D) = 0$.  Then $G^{\gamma}(X,Y) \equiv 0 \pmod{\fM}$
and $F^{\gamma}(X,Y) \equiv a_D X^D Y^{d-D} \pmod{\fM}$,  so for each $\beta \in \cO$ 
\begin{equation*} 
F^{\gamma^\beta}(X,Y) \, := \, F^{\gamma}(X+\beta Y,Y) - \beta G^{\gamma}(X+ \beta Y,Y)
 \, \equiv \,  b_D(X+\beta)^D Y^{d-D} \!\!\! \pmod{\fM} \ .
\end{equation*}
Comparing this with (\ref{FGg5}) shows $a_0(\beta) \equiv a_D \beta^D \pmod{\fM}$.  
When $\beta \not\equiv 0 \ \pmod{\fM}$, this means $\ord(a_0(\beta)) = 0$, so the criterion
in Lemma \ref{CriterionLemma}(B) is not met for the direction $\vv_{\gamma(\beta)}$, and again 
$\vv_{\gamma(0)}$ and $\vv_{\gamma(\infty)}$ are the only directions 
in which $\ordRes_{\varphi}(\cdot)$ can be locally constant.  
\end{proof}

\noindent{\bf Remark.} Using a similar argument, one can show that at any type II point 
there can be at most one direction in which $\ordRes_{\varphi}(\cdot)$ is locally decreasing,
without appealing to convexity.

\medskip
Our next goal is to show that $\ordRes_{\varphi}(\cdot)$ is strictly increasing as one moves away from the tree
$\Gamma_{\Fix,\varphi^{-1}(\infty)}$ in $\PP^1_{\Berk}$ spanned by the fixed points and the poles of $\varphi$.
This means that $\ordRes_{\varphi}(\cdot)$ achieves a minimum on $\PP^1_{\Berk}$, 
and shows that the locus $\MinRes(\varphi)$ where it takes on its minimum is contained in that tree. 

Two main facts underlie this.  
The first is that the group of affine transformations 
$\Aff_2(K) = \{a z + b: a \in K^{\times}, b \in K\}$, corresponding to matrices 
$\left[ \begin{array}{cc} a & b \\ 0 & 1 \end{array} \right] \in \GL_2(K)$, acts transitively 
on type II points.  Indeed, if $Q$ corresponds to a disc $D(b,r)$ with $r \in |K^{\times}|$, 
and $|a| = r$, then $\gamma(z) = az+b$ takes $\zeta_G$ to $Q$.  The second is that the fixed points  of $\varphi$ are 
equivariant under $\GL_2(K)$, and the poles are equivariant under $\Aff_2(K)$: for each $\gamma \in \GL_2(K)$, 
$\Delta$ is a fixed point of $\varphi$ iff $\gamma^{-1}(\Delta)$ is a fixed point of $\varphi^{\gamma}$; 
and for each $\gamma \in \Aff_2(K)$, $\delta$ is a pole of $\varphi$ iff $\gamma^{-1}(\delta)$ 
is a pole of $\varphi^{\gamma}$.   

\begin{lemma} \label{FixPoleSet} 
If $d \ge 2$, the set of poles and fixed points of $\varphi$ in $\PP^1(K)$ contains at least
two distinct elements.
\end{lemma}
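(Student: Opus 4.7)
The plan is to argue by contradiction: suppose $\Fix(\varphi) \cup \varphi^{-1}(\infty)$ consists of a single point $P \in \PP^1(K)$, and derive a contradiction from $d \ge 2$.

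First I would dispose of the case where $P$ is finite. The key observation is that for $z \in K$, the conditions $\varphi(z) = z$ (fixed point) and $\varphi(z) = \infty$ (pole) are mutually exclusive, so a finite $P$ could play at most one of the two roles. But $\varphi$ has at least one pole (since $d \ge 1$) and at least one fixed point (the fixed-point locus on $\PP^1$ is the vanishing of the degree-$(d+1)$ form $F(X,Y) Y - G(X,Y) X$, which is not identically zero for $d \ge 2$, and so has $d+1$ roots in $\PP^1(K)$ by algebraic closure). Hence a single finite point cannot account for both, and $P$ must equal $\infty$. In the remaining case $P = \infty$, the assumption that $\infty$ is the unique pole forces $G(z,1)$ to be a nonzero constant, so $\varphi \in K[z]$ is a polynomial of degree exactly $d$; and the assumption that $\infty$ is the unique fixed point forces $\varphi(z) - z$ to have no root in $K$. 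But for $d \ge 2$ this polynomial has degree $d$ (the leading term $f_d z^d$ survives the subtraction), and since $K$ is algebraically closed it must then have $d$ roots in $K$, a contradiction.

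The argument is entirely elementary and presents no serious obstacle; the only subtlety to watch is the coincidence at $\infty$ --- namely that ``$\infty$ is a pole'' and ``$\infty$ is a fixed point'' are the same condition $\varphi(\infty) = \infty$, so the point $P = \infty$ can play both roles simultaneously, which is precisely why that case requires its own treatment. The hypothesis $d \ge 2$ is used crucially in the polynomial case: for $d = 1$ the map $\varphi(z) = z + 1$ has $\infty$ as its unique fixed point and unique pole, showing that the lemma would fail without this hypothesis.
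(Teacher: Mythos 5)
Your proof is correct. It shares the paper's overall skeleton -- assume for contradiction that all poles and fixed points coincide at a single point, and split according to whether that point is finite or equal to $\infty$ -- but the two contradictions are produced by different mechanisms. The paper works entirely with the homogeneous forms: it writes $G(X,Y)$ and $YF(X,Y)-XG(X,Y)$ as constants times powers of the single linear form vanishing at $\alpha$, solves for $F$, and in each case exhibits a common factor of $F$ and $G$. You instead dispose of the finite case by the cleaner observation that a finite $P$ cannot simultaneously satisfy $\varphi(P)=P$ and $\varphi(P)=\infty$ while both the pole set and the fixed-point set are nonempty; and in the case $P=\infty$ you use algebraic closedness to produce a finite root of $\varphi(z)-z$. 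The only step left slightly implicit is why the polynomial $\varphi$ has degree \emph{exactly} $d$ (equivalently $f_d\ne 0$): this is where coprimality of $F$ and $G$ enters, since $f_d=0$ together with $G=cY^d$ would give $Y$ as a common factor -- i.e., exactly the contradiction the paper extracts. Your version is a dehomogenized form of the same computation in the $\infty$ case and is genuinely simpler in the finite case; both proofs use $d\ge 2$ in the same two places (nonvanishing of the fixed-point form $YF-XG$, and survival of the leading term of $\varphi(z)-z$), and your $d=1$ example $z\mapsto z+1$ correctly shows the hypothesis is sharp.
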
 

\begin{proof}  The fixed points of $\varphi$ correspond to solutions of $\varphi(z) = z$ in $\PP^1(K)$.  
Using the representation $(F(X,Y),G(X,Y))$ for $\varphi(z)$, we obtain the representation $( YF(X,Y) - XG(X,Y), G(X,Y))$ 
for $\varphi(z) - z$.  

Suppose all the poles and fixed points of $\varphi$ occur at a single point $\alpha \in \PP^1(K)$. 
If $\alpha = \infty$, there are $C, D \in K^{\times}$ such that $G(X,Y) = CY^d$ and $YF(X,Y) - XG(X,Y) = D Y^{d+1}$.
Solving, we see that  $Y F(X,Y) = D Y^{d+1} + C X Y^d$.  Since $d \ge 2$,
this contradicts that $F(X,Y)$ and $G(X,Y)$ have no common factors.  If $\alpha \in K$, there are $C, D \in K^{\times}$ 
such that $G(X,Y) = C (X-\alpha Y)^d$ and $Y F(X,Y) - XG(X,Y) = D(X - \alpha Y)^{d+1}$.  
In this case $Y F(X,Y) = D(X - \alpha Y)^{d+1} + C X (X - \alpha Y)^d$, which again contradicts 
that $F(X,Y)$ and $G(X,Y)$ have no common factors.
\end{proof} 

\begin{proposition} \label{IncreasingProp}
If $d \ge 2$, the function $\ordRes_{\varphi}(\cdot)$ 
is strictly increasing as one moves away from the tree $\Gamma_{\Fix,\varphi^{-1}(\infty)}$ 
in $\PP^1_{\Berk}$
spanned by the fixed points and poles of $\varphi$ in $\PP^1(K)$.
\end{proposition}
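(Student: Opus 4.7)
By Proposition~\ref{ExtensionCor}, $\ordRes_\varphi$ is continuous and piecewise affine on $\PP^1_\Berk$, convex upwards on every path, with breaks confined to type II points. The proposition therefore reduces to the following local claim: at each type II point $Q$ and each direction $\vv \in T_Q$ whose corresponding component of $\PP^1_\Berk \setminus \{Q\}$ contains no fixed points or poles of $\varphi$, $\ordRes_\varphi$ is strictly increasing in $\vv$ at $Q$.

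The core case is $(Q,\vv) = (\zeta_G, \vv_\infty)$. Fix a normalized representation $(F,G)$. By Lemma~\ref{CriterionLemma}(C), it suffices to rule out conditions (A) and (B) for $\vv_\infty$. Suppose (A) holds: $\ord(f_\ell) > 0$ for $\ell \ge (d+1)/2$ and $\ord(g_\ell) > 0$ for $\ell \ge (d-1)/2$. Consider the homogeneous fixed-point polynomial $H := YF - XG$ of degree $d+1$, whose roots in $\PP^1(K)$ are exactly the fixed points of $\varphi$ (which, together with poles, exist by Lemma~\ref{FixPoleSet}). Its coefficients $h_\ell = f_\ell - g_{\ell-1}$ (with $h_0 = f_0$, $h_{d+1} = -g_d$) satisfy $\ord(h_\ell) > 0$ for every $\ell \ge (d+1)/2$ and for $\ell = d+1$. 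If some $h_i$ is a unit, then $\tilde H$ has $X$-degree at most $\lfloor d/2 \rfloor$, so as a homogeneous polynomial of degree $d+1$ it vanishes at $[1{:}0]$ with multiplicity at least $\lceil (d+2)/2 \rceil$, forcing at least one fixed point of $\varphi$ to reduce to $\infty$ at $\zeta_G$ --- contradicting the hypothesis. Otherwise all $h_i \in \fM$; the relations $f_i \equiv g_{i-1} \pmod{\fM}$ together with $f_0, g_d \in \fM$ force some $g_j$ with $j < (d-1)/2$ to be a unit, for otherwise $\ord(g_i) > 0$ for all $i$ would imply $\ord(f_i) > 0$ for all $i$, contradicting that $(F,G)$ is normalized. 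Then $\tilde G$ has $X$-degree strictly less than $(d-1)/2$, producing at least one pole of $\varphi$ reducing to $\infty$ --- again a contradiction. Criterion (B), active only for $d$ odd, is handled by the same strategy, tracking the unit among $f_{(d+1)/2}$ and $g_{(d-1)/2}$: it either makes $\tilde H$ have $X$-degree exactly $(d+1)/2$ (forcing fixed points at $\infty$) or, via the same normalization argument, forces $\tilde G$ to have small $X$-degree (forcing poles at $\infty$).

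The general case reduces to the core one by $\GL_2(K)$-equivariance. Choose $\gamma \in \GL_2(K)$ with $\gamma(\zeta_G) = Q$ and $\gamma_*(\vv_\infty) = \vv$, and set $\psi := \varphi^\gamma$. From (\ref{ordRes_varphiDef}) and continuity one derives $\ordRes_\psi = \ordRes_\varphi \circ \gamma$, so strict increase in $\vv$ at $Q$ for $\ordRes_\varphi$ is equivalent to strict increase in $\vv_\infty$ at $\zeta_G$ for $\ordRes_\psi$. The fixed points of $\psi$ are $\gamma^{-1}$-images of those of $\varphi$, while the poles of $\psi$ are $\gamma^{-1}(\varphi^{-1}(a))$ for $a := \gamma(\infty)$. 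For $\vv = \vv_{Q,\infty}$ one takes $\gamma \in \Aff_2(K)$, so $a = \infty$ and the transferred hypothesis matches the given one exactly. For $\vv \ne \vv_{Q,\infty}$, one picks $a$ in the component of $\vv$ at $Q$ so that $\varphi^{-1}(a)$ is disjoint from this component; such $a$ exist generically since the component contains infinitely many $K$-points and $\varphi$ has finite fibers, and then the core case applied to the resulting $\psi$ closes the argument.

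\textbf{Main obstacle.} The principal difficulty lies in the sub-case of the core argument where $H \equiv 0 \pmod{\fM}$: fixed-point reductions cannot be read off from $\tilde H$ directly, and one must instead extract the vanishing of $\tilde G$ at $\infty$ through the combinatorial interplay between the identities $h_i = f_i - g_{i-1}$ and the normalization of $(F,G)$. The cancellation bookkeeping for criterion (B), where the $X^{(d+1)/2}$-coefficients of $Y\tilde F$ and $X\tilde G$ may annihilate each other in $\tilde H$, is the analogous subtlety that must be handled explicitly.
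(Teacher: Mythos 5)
Your core case at $(\zeta_G,\vv_\infty)$ is correct: ruling out criteria (A) and (B) of Lemma \ref{CriterionLemma} by factoring $G$ and $YF-XG$ over their roots in $\cO$ and locating a unit coefficient is, in slightly different clothing, exactly what the paper does (the paper phrases it via the ultrametric identity relating the coefficients of $F$, $G$ and $YF-XG$ rather than via reductions mod $\fM$, and the cancellation you flag for criterion (B) disappears once you treat the case ``$g_{(d-1)/2}$ a unit'' first). The genuine gap is in your reduction of the remaining directions to this single core case. For $\vv\ne\vv_{Q,\infty}$ you need $\gamma$ with $\gamma(\infty)=a$ lying in the component $U_\vv$, and you must arrange that $\psi=\varphi^\gamma$ has no poles in the $\vv_\infty$-component at $\zeta_G$, i.e.\ that $\varphi^{-1}(a)\cap U_\vv=\varnothing$. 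Your justification --- ``such $a$ exist generically since the component contains infinitely many $K$-points and $\varphi$ has finite fibers'' --- is a non sequitur: individual fibers being finite says nothing about whether the union of fibers over $a\in U_\vv$ covers $U_\vv$, and it can. Take $p$ odd and $\varphi(z)=pz^2+z+1$. Its fixed points are $\infty$ and $\pm\sqrt{-1/p}$ (of absolute value $p^{1/2}$), its poles are at $\infty$, so $\Gamma_{\Fix,\varphi^{-1}(\infty)}$ is a tripod with center $Q=\zeta_{D(0,p^{1/2})}$, and $\vv_0\in T_Q$ points away from the tree. But for every $a$ with $|a|<p^{1/2}$ the Newton polygon of $pz^2+z+(1-a)$ shows one root of absolute value $\max(|1-a|,\,|a|)<p^{1/2}$; hence every $a\in U_{\vv_0}\cap\PP^1(K)$ has a preimage in $U_{\vv_0}$, and no admissible $a$ exists. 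Your argument therefore cannot establish the proposition in precisely the ``downward'' directions, which are most of the directions off the tree.

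The fix is to accept a second core case rather than force everything through $\vv_\infty$. The paper conjugates only by elements of $\Aff_2(K)$ (composed with $\nu^\beta\in\GL_2(\cO)$), which act transitively on type II points and preserve both the fixed points and the poles; this reduces a downward direction to the direction $\vv_0$ at $\zeta_G$ under the hypothesis that all poles and fixed points of $\varphi^\gamma$ lie in $(K\setminus\fM)\cup\{\infty\}$. One then factors $G^\gamma$ and $YF^\gamma-XG^\gamma$ from the constant-term end ($G^\gamma=CY^m\prod(X-\delta_iY)$ with $|\delta_i|\ge 1$, and similarly for the fixed-point form) to conclude $\ord(b_0)=0$, which violates criterion (A)/(B) for $\vv_0$ since $d\ge 2$. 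Your mod-$\fM$ bookkeeping adapts to this second case with the inequalities reversed (low-index coefficients in place of high-index ones), but the case cannot be subsumed into the $\vv_\infty$ case by a M\"obius change of variable.
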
 

\begin{proof} Let $\Gamma = \Gamma_{\Fix,\infty}(\varphi)$ 
be the tree spanned by the fixed points and poles of $\varphi$.  
Branches off $\Gamma$ in $\PP^1_{\Berk}$ can only occur at type II points.  
By the convexity of $\ordRes_{\varphi}(\cdot)$, it suffices to show that at each type II point $Q \in \Gamma$, 
$\ordRes_{\varphi}(\cdot)$ is increasing in each direction $\vv \in T_Q$ which points away from $\Gamma$.

Fix a type II point $Q \in \Gamma$, and let $\vv \in T_Q$ be a direction away from $\Gamma$. 
Let $\gamma \in \Aff_2(K)$ be such that $\gamma(\zeta_G) = Q$.  
If $\vv = \vv_{Q,\infty}$, then $\gamma_*(\vv_{\zeta_G,\infty}) = \vv$. 
If $\vv \ne \vv_{Q,\infty}$, there is some $\beta \in \cO$ such that $\gamma_*(\vv_{\zeta_G,\beta}) = \vv$,
and after replacing $\gamma$ with $\gamma^{\beta} = \gamma \circ \nu^{\beta}$ we can assume that  
$\gamma_*(\vv_{\zeta_G,0}) = \vv$.  Finally, by replacing $\gamma$ with $c \gamma$ for some $c \in K^{\times}$, 
we can assume that the representation $(F^{\gamma},G^{\gamma})$ of $\varphi^{\gamma}$ is normalized.  

First suppose $\vv = \gamma_*(\vv_{\zeta_G,\infty}) = \vv_{\gamma(\infty)}$.
By the equivariance of poles and fixed points under $\Aff_2(K)$, 
$\varphi^{\gamma}$ has no poles or fixed points in the direction 
$\vv_{\infty}$ at $\zeta_G$.  As in (\ref{FGg2}), write 
$F^{\gamma}(X,Y) = a_d X^d + a_{d-1} X^{d-1} Y + \cdots + a_0 Y^d$, 
$G^{\gamma}(X,Y) = b_d X^d + b_{d-1} X^{d-1} Y + \cdots + b_0 Y^d$. 
By hypothesis the poles $\delta_i$ of $\varphi^{\gamma}$
all belong to $\cO$, so we can factor $G^{\gamma}(X,Y) = b_d \cdot \prod_{i=1}^{d} (X - \delta_i Y)$
where $|\delta_i| \le 1$ for each $i$.  
Expanding this and comparing coefficients shows that $\max(|b_d|, \ldots, |b_0|) = |b_d|$.  
Likewise, the fixed points $\Delta_i$ of $\varphi^{\gamma}$ all belong to $\cO$.
Since the fixed points are the zeros of 
\begin{equation*} 
YF^{\gamma}(X,Y) - XG^{\gamma}(X,Y) \ = \ a_d X^{d+1} + (a_{d-1}-b_d) X^d Y + \cdots + (a_0 - b_1) XY^d - b_0 Y^{d+1} \ , 
\end{equation*}
we can write $X F^{\gamma}(X,Y) - X G^{\gamma}(X,Y) = a_d \prod_{i=1}^{d+1} (X - \Delta_i Y)$. 
Expanding this and comparing coefficients shows that $\max(|a_d|, |a_{d-1}-b_d|, \cdots, |a_0-b_1|, |b_0|) = |a_d|$. 
However, it is an easy consequence of the ultrametric inequality that
\begin{eqnarray} 
& & \max(|a_d|, |a_{d-1}-b_d|, \cdots, |a_0-b_1|, |b_0|, \  |b_d|, |b_{d_1}|, \cdots, |b_0|) \notag \\
& & \qquad \qquad \qquad \qquad 
           \ = \ \max(|a_d|, |a_{d-1}|, \cdots, |a_0|, \  |b_d|, |b_{d-1}|, \cdots, |b_0|) \ . \label{BasicEq} 
\end{eqnarray}
Thus $\max(|a_d|, |a_{d-1}|, \cdots, |a_0|, \  |b_d|, |b_{d-1}|, \cdots, |b_0|) = \max(|a_d|,|b_d|)$. 
Since $(F^{\gamma},G^{\gamma})$ is normalized, it follows that $\ord(a_d) = 0$ or $\ord(b_d) = 0$.  
By Lemma \ref{CriterionLemma}, 
$\ordRes_{\varphi}(\cdot)$ cannot be decreasing or constant in the direction $\vv = \vv_{\gamma(\infty)}$, 
so it must be increasing.   

Next suppose $\vv = \gamma_*(\vv_{\zeta_G,0}) = \vv_{\gamma(0)}$.
In this case  $\varphi^{\gamma}$ has no poles or fixed points in the direction $\vv_0$ at $\zeta_G$. 
As before, write $F^{\gamma}(X,Y) = a_d X^d + a_{d-1} X^{d-1} Y + \cdots + a_0 Y^d$, 
$G^{\gamma}(X,Y) = b_d X^d + b_{d-1} X^{d-1} Y + \cdots + b_0 Y^d$. 
By hypothesis the poles of $\varphi^{\gamma}$ belong to $(K \backslash \fM) \cup \{\infty\}$, 
so we can factor $G^{\gamma}(X,Y) = C Y^m \cdot \prod_{i=1}^{d-m} (X - \delta_i Y)$  
for some $C \in K^{\times}$, where $m$ is the number of poles of $\varphi^{\gamma}$ at $\infty$  
and $|\delta_i| \ge 1$ for $i = 1, \ldots, d-m$.  
Expanding and comparing coefficients shows that $|b_0| = \max(|b_d|, \ldots, |b_0|)$.  
Likewise, the fixed points of $\varphi^{\gamma}$ 
all belong to $(K \backslash \fM) \cup \{\infty\}$, 
so for some $D \in K^{\times}$ we can write 
$X F^{\gamma}(X,Y) - X G^{\gamma}(X,Y) = D \cdot Y^n  \prod_{i=1}^{d-n} (X - \Delta_i Y)$
where $n$ is the number of fixed points of $\varphi^{\gamma}$ at $\infty$, 
and $|\Delta_i| \ge 1$ for $i = 1, \ldots, d-n$. Expanding and
comparing coefficients shows that $|b_0| = \max(|a_d|, |a_{d-1}-b_d|, \cdots, |a_0-b_1|, |b_0|)$. 
Using (\ref{BasicEq}) we see that  
$|b_0| = \max(|a_d|, |a_{d-1}|, \cdots, |a_0|, \  |b_d|, |b_{d-1}|, \cdots, |b_0|)$. 
Since $(F^{\gamma},G^{\gamma})$ is normalized, it must be that $\ord(b_0) = 0$.  
By Lemma \ref{CriterionLemma}, $\ordRes_{\varphi}(\cdot)$ cannot be 
locally decreasing or constant in the direction $\vv = \vv_{\gamma(0)}$, 
so it must be increasing.
\end{proof} 

\begin{proposition} \label{BoundProp} Suppose $d \ge 2$.
Given a point $x \in \PP^1(K)$,  
let $\xi$ be the unique point in $[\zeta_G,x]$ 
such that $\rho(\zeta_G,x) = \frac{2}{d-1} \ordRes(\varphi)$.  
Then $\ordRes_\varphi(\cdot)$ is increasing along $[\xi,x]$
as one moves away from $\xi$. 
\end{proposition}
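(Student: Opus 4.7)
The approach I would take is to exploit the convexity of $\ordRes_\varphi(\cdot)$ along the path $[\zeta_G,x]$ together with the arithmetic constraints on the allowed slopes of its affine pieces, both of which are supplied by Proposition \ref{ExtensionCor}. The rough idea is that if $\ordRes_\varphi$ were still decreasing past $\xi$, then by convexity it would have been decreasing on all of $[\zeta_G,\xi]$, and the discreteness of the slope set would force the decrease to be fast enough to drive the function strictly below zero, contradicting nonnegativity.

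First I would set $R := \ordRes(\varphi)$ and parametrize $[\zeta_G,x]$ by $t = \rho(\zeta_G,\cdot)$. By Proposition \ref{ExtensionCor}, $\ordRes_\varphi$ on this path is convex upward and piecewise affine, equals $R$ at $t=0$, and is nonnegative on $\HH_\Berk$. Its affine slopes are integers $m \equiv d^2+d \pmod{2d}$ with $|m| \le d^2+d$. The key arithmetic fact is that every \emph{nonzero} allowed slope satisfies $|m| \ge d$: when $d$ is even, $m \equiv d \pmod{2d}$, so $|m| \in \{d, 3d, 5d,\ldots\}$; when $d$ is odd, $m \equiv 0 \pmod{2d}$, so the smallest nonzero possibility is $|m| = 2d \ge d$.

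The main argument is by contradiction. One may assume $R>0$, for otherwise $\xi = \zeta_G$ and nonnegativity alone forces $\ordRes_\varphi$ to be non-decreasing on $[\zeta_G,x]$ immediately. Let $s(t)$ denote the slope of the affine piece of $\ordRes_\varphi$ containing parameter $t$; by convexity, $s$ is non-decreasing. If the conclusion failed, $s$ would take a negative value just past $\xi$, and monotonicity of $s$ would force $s(u) < 0$ for all $u \in [0, 2R/(d-1)]$; the arithmetic observation then yields $s(u) \le -d$ throughout. Integrating against the fundamental theorem for the piecewise linear function gives
\[
\ordRes_\varphi(\xi) \;=\; R + \int_0^{2R/(d-1)} s(u)\, du \;\le\; R - d \cdot \frac{2R}{d-1} \;=\; -\frac{(d+1)R}{d-1} \;<\; 0,
\]
contradicting $\ordRes_\varphi(\xi) \ge 0$ from Proposition \ref{ExtensionCor}. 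Therefore $s(t) \ge 0$ for all $t \ge 2R/(d-1)$, which is the desired monotonicity along $[\xi,x]$.

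The only subtle point, and the main obstacle such as it is, is pinning down that no allowed slope lies in the open interval $(-d, 0)$; this is precisely what the congruence $m \equiv d^2+d \pmod{2d}$ together with the parity split on $d$ delivers. Once that bound is in hand, the constant $2/(d-1)$ in the statement is exactly what the inequality $R - d \cdot 2R/(d-1) < 0$ calls for. Note that the argument gives monotonicity (nondecrease), not strict increase, past $\xi$, which is consistent with the possibility from Theorem \ref{MainThm} that $\MinRes(\varphi)$ is a nontrivial segment reaching up to distance $\tfrac{2}{d-1}\ordRes(\varphi)$ from $\zeta_G$ when $d$ is odd.
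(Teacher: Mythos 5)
Your mechanics are sound as far as they go: the slope quantization from Proposition \ref{ExtensionCor} does imply that every nonzero slope has absolute value at least $d$ (at least $2d$ when $d$ is odd), and the contradiction $\ordRes_\varphi(\xi) \le R - d\cdot\tfrac{2R}{d-1} < 0$ is computed correctly. But what this argument delivers is only that the slopes are $\ge 0$ beyond $\xi$, i.e.\ that $\ordRes_\varphi(\cdot)$ is \emph{non-decreasing} on $[\xi,x]$ — and that is genuinely weaker than what the proposition asserts and what it is used for. In this paper ``increasing'' is strict (it is explicitly contrasted with ``locally constant'' before Lemma \ref{CriterionLemma}), and the proof of Theorem \ref{MainThm} invokes Proposition \ref{BoundProp} precisely to conclude that a point $z$ with $\rho(\zeta_G,z) > \tfrac{2}{d-1}\ordRes(\varphi)$ satisfies $\ordRes_\varphi(z) > \ordRes_\varphi(\xi) \ge \min$, hence $z \notin \MinRes(\varphi)$. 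With only non-decrease one gets $\ordRes_\varphi(z) \ge \ordRes_\varphi(\xi)$, which does not exclude $z$ from $\MinRes(\varphi)$. So your closing remark has it backwards: the non-strict conclusion is not ``consistent with'' Theorem \ref{MainThm}, it fails to prove the very containment $\MinRes(\varphi) \subseteq \{\rho(\zeta_G,z) \le \tfrac{2}{d-1}\ordRes(\varphi)\}$ that the proposition exists to supply.

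The gap cannot be closed within your framework. When $d$ is odd, slope $0$ is an allowed value, and a convex, nonnegative, piecewise-affine function with quantized slopes and value $R$ at $t=0$ can perfectly well be constant (equal to its minimum) on an interval of arbitrary length — nothing in convexity, nonnegativity, or the congruence on slopes rules out a zero-slope plateau extending past $t = \tfrac{2R}{d-1}$. The paper's proof uses an extra, coefficient-level input that you do not have: choosing $\gamma \in \GL_2(\cO)$ with $\gamma(0)=x$ and expanding the resultant determinant (\ref{FGResultant}) along its first and last columns to get $\min(\ord(a_0),\ord(b_0)) \le \ordRes(\varphi)$ and $\min(\ord(a_d),\ord(b_d)) \le \ordRes(\varphi)$ for the normalized representation of $\varphi^\gamma$. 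Feeding this into (\ref{LinearFormula2}) gives the pointwise lower bound
\begin{equation*}
\ordRes_{\varphi^\gamma}(Q_A) - \ordRes(\varphi) \ \ge \ -2d\,\ordRes(\varphi) + (d^2-d)\,|\ord(A)| \ ,
\end{equation*}
which is \emph{strictly positive} once $|\ord(A)| > \tfrac{2}{d-1}\ordRes(\varphi)$; combined with convexity and the value $\ordRes(\varphi)$ at $\zeta_G$, this forces strict increase past $\xi$ and is where the constant $\tfrac{2}{d-1}$ actually comes from (your inequality $R - dT < 0$ would already trigger at $T > R/d$, a sign that your bound is answering a different, weaker question). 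To repair your proof you would need to import something like this lower bound, at which point you have essentially reconstructed the paper's argument.
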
 

\begin{proof} Choose a point $y \in \PP^1(K)$ lying in a different direction at $\zeta_G$ than $x$.  
Then there is a $\gamma \in \GL_2(\cO)$ such that $\gamma(0) = x$, $\gamma(\infty) = y$, 
and $\gamma([0,\infty]) = [x,y]$. 
Since $\gamma \in \GL_2(\cO)$, it fixes $\zeta_G$.  
Thus $\ordRes_{\varphi^{\gamma}}(\zeta_G) = \ordRes(\varphi^{\gamma}) = \ordRes(\varphi)$. 
Let $(F^{\gamma},G^{\gamma})$ be a representation 
of $\varphi^{\gamma}$, as in (\ref{FGg2});  
after scaling $(F^{\gamma}, G^{\gamma})$ we can assume it is normalized.
At least one of the coefficients $a_0, b_0$ in $F^{\gamma}, G^{\gamma}$ must be nonzero.
Expanding the determinant (\ref{FGResultant}) 
for $\Res(F^{\gamma},G^{\gamma})$  
using its last column, one sees that  $\min(\ord(a_0), \ord(b_0)) \le \ordRes(\varphi)$. 
Similarly, $\min(\ord(a_d), \ord(b_d)) \le \ordRes(\varphi)$.  
 
Given $A \in K^{\times}$, put $Q_A = \zeta_{D(0,|A|)}$ and let $(F^{\gamma_A},G^{\gamma_A})$ 
be as in (\ref{FGg3}).  By (\ref{LinearFormula1}), (\ref{LinearFormula2}) and the discussion above,
\begin{eqnarray}  
\ordRes_{\varphi^{\gamma}}(Q_A) \!\!\!\! &-&\!\!\!\!  \ordRes(\varphi) 
     \ =  \ (d^2 + d) \ord(A) \notag \\       
      & & \qquad 
       - 2d \min\big( \ord(a_0), \cdots, \ord(A^d a_d), \ord(A b_0) , \cdots , \ord(A^{d+1} b_d) \big) 
                  \notag  \\
     & \ge & \max\big(-2d \,\ord(a_0) + (d^2 + d) \ord(A), -2d \,\ord(b_0) + (d^2-d) \ord(A), \notag \\
     &  & \qquad -2d \, \ord(a_d) + (d-d^2) \ord(A) , -2d \, \ord(b_d) + (-d - d^2) \ord(A)\big) \notag \\
     & \ge & -2d \, \ordRes(\varphi) + \max\big( (d^2-d) \ord(A), (d-d^2) \ord(A)\big).
     \label{LinearFormula3b}
\end{eqnarray}
Since $\ordRes_{\varphi^{\gamma}}(\zeta_G) - \ordRes(\varphi) = 0$, 
the minimum value of $\ordRes_{\varphi^{\gamma}}(\cdot) -\ordRes(\varphi)$ on $[0,\infty]$ 
is nonpositive.  Since $d \ge 2$, we have $d^2 - d > 0$;  
hence the right side of (\ref{LinearFormula3b}) is nonpositive precisely when
\begin{equation} 
-\frac{2}{d-1} \ordRes(\varphi) \ \le \ \ord(A) \ \le \ \frac{2}{d-1} \ordRes(\varphi) \ . \label{FR1}
\end{equation}
By convexity, $\ordRes_{\varphi^{\gamma}}(Q_A)$ must be increasing
with $\ord(A)$ for $\ord(A) > \frac{2}{d-1} \ordRes(\varphi)$. 
Since $\ordRes_{\varphi}(\gamma(Q_A)) = \ordRes_{\varphi^{\gamma}}(Q_A)$, the Proposition follows.    
\end{proof} 

\begin{proof}[Proof of Theorem \ref{MainThm}]
Assume $d \ge 2$.
By Proposition \ref{ExtensionCor}, the function $\ordRes_{\varphi}(\cdot)$ on type II points extends to a 
function $\ordRes_{\varphi} : \PP^1_{\Berk} \rightarrow [0,\infty]$ 
which is continuous with respect to the strong topology, 
finite on $\HH_{\Berk}$ and $\infty$ on $\PP^1(K)$,
and piecewise affine and convex upwards with respect to $\rho(x,y)$ on each path. 
By Lemma \ref{FixPoleSet}, the tree $\Gamma = \Gamma_{\Fix,\infty}(\varphi)$ 
spanned by the poles and fixed points of $\varphi$ is nontrivial, and by Proposition \ref{IncreasingProp}, 
$\ordRes_{\varphi}(\cdot)$ is strictly increasing as one moves away from $\Gamma$.  
It follows that $\ordRes_{\varphi}(\cdot)$ takes on a minimum value on $\PP^1_{\Berk}$,
and that the set $\MinRes(\varphi)$ where the minimum is achieved is a 
compact connected subset of $\Gamma \cap \HH_{\Berk}$.  

On any path the slopes of $\ordRes_{\varphi}(\cdot)$ are integers 
$m \equiv d^2 + d \pmod{2d}$.  If $d$ is even, then $d^2 + d \equiv d \pmod{2d}$, 
so none of the slopes are $0$.
Since the breaks between affine pieces occur at type II points, 
$\MinRes(\varphi)$ consists of a single type II point.  
If $d$ is odd, then $d^2 + d \equiv 0 \pmod{2d}$.
By Lemma \ref{TwoDirsCor}, at each $Q \in \MinRes(\varphi)$ there are at most two 
directions where $\ordRes_{\varphi}(\cdot)$ is constant.  
Thus $\MinRes(\varphi)$ is either a single type II point, or a segment with type II endpoints.  

We next show show that 
$\MinRes(\varphi) \subseteq \{z \in \HH_\Berk : \rho(\zeta_G,z) \le \frac{2}{d-1} \ordRes(\varphi) \}$.  
Fix $z$ with $\rho(\zeta_G,z) > \frac{2}{d-1} \ordRes(\varphi)$.  Let $\xi$ be the unique point on $[\zeta_G,z]$ 
with $\rho(\zeta_G,\xi) = \frac{2}{d-1} \ordRes(\varphi)$;  then $\xi$ is of type II.  Let $x \in \PP^1(K)$ be a  
type I point whose direction from $\xi$ is the same as that of $z$.  By Proposition \ref{BoundProp},
$\ordRes_{\varphi}(\cdot)$ is increasing along $[\xi,x]$.  Since $[\xi,z]$ and $[\xi,x]$ share 
an initial segment, by convexity $\ordRes_{\varphi}(\cdot)$ is increasing along  $[\xi,z]$.
Thus $z \notin \MinRes(\varphi)$. 

The final assertion in Theorem \ref{MainThm} reformulates of a result of 
Favre and Rivera-Letelier (\cite{FRLErgodic}, Theorem E).  Suppose the minimal value of
$\ordRes_\varphi(\xi)$ is $0$.  By what has been shown above,  
there is a type II point $\xi \in \MinRes(\varphi)$  where $\ordRes_\varphi(\xi) = 0$.
Let $\gamma \in \GL_2(K)$ be such that $\gamma(\zeta_G) = \xi$.  Then $\ordRes(\varphi^\gamma) = 0$,
so $\varphi^{\gamma}$ has good reduction.  
Since $d \ge 2$, by (\cite{FRLErgodic}, Theorem E, or \cite{B-R}, Proposition 10.5), 
$\xi$ is the unique point where $\varphi$ achieves good reduction.  
Thus $\MinRes(\varphi) = \{\xi\}$.
\end{proof} 

\begin{proof}[Proof of Theorem \ref{StabilityThm}] 
Since $\ordRes_\varphi(\cdot)$ and $\ordRes_\tvarphi(\cdot)$ are continuous for the strong topology, 
to prove the first assertion  
it suffices to show that if (\ref{Congruence}) holds then 
$\ordRes_\varphi(\xi) = \ordRes_\tvarphi(\xi)$ for all type II points $\xi$
with $\rho(\zeta_G,\xi) \le M$.  If $\xi = \zeta_{D(a,r)}$ then the path from $\zeta_G$ to $\xi$ goes 
from $\zeta_G$ to $\zeta_{D(0,T)}$ where $T = \max(1,|a|)$, 
and then from $\zeta_{D(0,T)} = \zeta_{D(a,T)}$ to $\xi$.  Hence if $A, B \in K^{\times}$ are such that
$|A| = 1/T$ and $|B| = r/T$, then $\rho(\zeta_G,\xi) = \ord(A \cdot B)$ and $\xi = \gamma(\zeta_G)$, where 
\begin{equation*}
\gamma \ = \ \left[ \begin{array}{cc} 1 & 0 \\ 0 & A \end{array} \right] \cdot 
                 \left[ \begin{array}{cc} B & aA \\ 0 & 1 \end{array} \right]
          \ = \ \left[ \begin{array}{cc} B & aA \\ 0 & A \end{array} \right] \in \ \GL_2(K) \cap M_2(\cO) \ .
\end{equation*} 
By (\ref{KeyFormula}) we have  
\begin{equation} \label{NextFormula}
\ordRes_\varphi(\xi) \ = \ 
\ordRes(F,G) + (d^2 + d)(\ord(A \cdot B)) - 2d \min(\ord(F^{\gamma}), \ord(G^{\gamma}))  
\end{equation}   
where $F^\gamma$ and $G^\gamma$ are given by (\ref{FGg1}),
and an analogous formula holds for $\ordRes_\tvarphi(\xi)$. 
Since $\ordRes_\varphi(\xi) \ge 0$, $\ordRes(F,G) = R$, and $\ord(A \cdot B) \le M$, 
we conclude from (\ref{NextFormula}) that
\begin{equation*}
\min(\ord(F^{\gamma}), \ord(G^{\gamma})) \ \le \ \frac{1}{2d}\big(R + (d^2+d) M\big) \ .
\end{equation*} 
Now (\ref{Congruence}) gives  
$\min(\ord(F^{\gamma}), \ord(G^{\gamma})) = \min(\ord(\tF^{\gamma}), \ord(\tG^{\gamma}))$
and $\ordRes(F,G) = \ordRes(\tF,\tG) = R$.
Hence $\ordRes_\varphi(\xi) = \ordRes_\tvarphi(\xi)$.

The second assertion follows by taking $M = \frac{2}{d-1} R + \varepsilon$ in (\ref{Congruence}),
with $\varepsilon > 0 $ small, and using Theorem \ref{MainThm}.
\end{proof}

\section{Examples} \label{ExamplesSection}  

Throughout this section, we write $\CC_p$ for the completion of the algebraic closure of $\QQ_p$.
The valuation $\ord(\cdot)$ on $\CC_p$ will be  normalized so that $\ord(p) = 1$, 
and $| \cdot |_p = p^{-\ord(\cdot)}$ is the usual absolute value on $\CC_p$.
We write $\Res(\varphi)$ for $\Res(F,G)$, where $(F,G)$ is the obvious homogenization of
the pair of polynomials defining $\varphi$. 

\smallskip
We first give two examples where $\varphi(z)$ has potential good reduction.

\smallskip
\noindent{\bf Example 2.1}. \label{Example1}
The function $\varphi(z) = \displaystyle{\frac{z^d - p}{z^{d-1}}}$\,, 
with $p$ arbitrary and $K = \CC_p$.

\smallskip
Here $\Res(\varphi) = (-1)^{d(d-1)/2}  p^{d-1}$, so $\ordRes(\varphi) = d-1$.
The poles of $\varphi$ are $0$ and $\infty$, 
and there is a $(d+1)$-fold fixed point at $\infty$.
The tree $\Gamma$ spanned the fixed points and poles is just the path $[0,\infty]$.  
   
Consider $\ordRes_\varphi(\cdot)$ on $[0,\infty]$.  
Let $Q_A \in \PP^1_{\Berk}$  correspond to $D(0,|A|)$; by (\ref{LinearFormula2})  
\begin{eqnarray*} 
\ordRes_\varphi(Q_A) & = & (d-1) + (d^2 + d) \ord(A)
      - 2d \, \min\big(\ord(A^d), \ord(p), \ord(A \! \cdot \! A^{d-1}) \big) \\
          & = & \max\big((d-1)+(d-d^2) \ord(A), (-d-1) + (d^2 + d) \ord(A)) \ .  
\end{eqnarray*} 
This achieves its minimum when $\ord(A) = 1/d$, 
 and $\ordRes_{\varphi}(\zeta_{D(0,p^{1/d})}) = 0$.  

Thus $\varphi(z)$ has potential good reduction at the point $\zeta_{D(0,p^{1/d})}$, 
and conjugation by 
$\gamma  = \left[ \begin{array}{cc} p^{1/d} & 0 \\ 0 & 1 \end{array} \right]$  
achieves the necessary change of coordinates: indeed  
\begin{equation*}
\varphi^{\gamma}(z) \ = \ \frac{z^d-1}{z^{d-1}} \ .
\end{equation*} 
Here $\rho(\zeta_G,\zeta_{D(0,p^{1/d})}) = 1/d < \frac{2}{d-1} \ordRes(\varphi) = 2$.
Note also that $\ordRes_{\varphi}(\zeta_G) = d-1$, 
and $\ordRes_{\varphi}(\zeta_{D(0,p)}) = d^2-1$. 

\smallskip
\noindent{\bf Example 2.2}. \label{Example2} The function $\varphi(z) = \displaystyle{\frac{z^2 - 1}{2z}}$\,, 
with $K = \CC_2$.

\smallskip
Here $\Res(\varphi) = -4$, so $\ordRes(\varphi) = 2$.
The poles of $\varphi$ are $0$ and $\infty$, 
and the fixed points are $\infty$ and $\pm i$, where $i = \sqrt{-1}$.
If we write  $\zeta_{D(a,r)}$ for the point in $\PP^1_{\Berk}$ corresponding to 
the disc $D(a,r) \subset K$, 
then the tree $\Gamma$ spanned by $\{0,\infty, i, -i\}$ has branch points at  $\zeta_G = \zeta_{D(0,1)}$ 
and $\zeta_{D(i,1/2)}$.  
    
First consider $\ordRes_\varphi(\cdot)$ on the path $[0,\infty]$.  
Let $Q_A = \zeta_{D(0,|A|)} \in \PP^1_{\Berk}$; then  
\begin{equation*} 
\ordRes_\varphi(Q_A) \ = \ \max\big(2 - 2 \ord(A), 2 + 6 \ord(A)\big) \ .  
\end{equation*} 
This takes on its minimum when $\ord(A) = 0$, where $\ordRes_{\varphi}(Q_A) = 2$ 
and $Q_A = \zeta_G$. 

Next consider $\ordRes_\varphi(\cdot)$ on the path $[i,\infty]$.  
Let $\gamma = \left[ \begin{array}{cc} 1 & i \\ 0 & 1 \end{array} \right]$, 
so $\gamma(0) = i$ and $\gamma(\infty) = \infty$.  Then 
\begin{equation*}
\varphi^{\gamma}(z) \ = \ \frac{(z+i)^2  - 1}{2(z+i)} - i \ = \ \frac{z^2 - 4iz}{2z + 2i} \ .
\end{equation*} 
Let $Q_A$ be the point corresponding to the disc $D(i,|A|)$;  then 
\begin{equation*}
\ordRes_{\varphi}(Q_A) \ = \ \max\big(2 - 2 \ord(A), -2 + 2 \ord(A)\big) \ .     
\end{equation*} 
This achieves its minimum when $\ord(A) = 1$, and $\ordRes_{\varphi}(Q_A) = 0$. 
The corresponding point $Q_A$ is $\zeta_{D(i,1/2)}$;
note that $\rho(\zeta_G,\zeta_{D(i,1/2)}) = 1 < \frac{2}{d-1} \ordRes(\varphi) = 4$. 

          Thus $\varphi(z)$ has potential good reduction at $\zeta_{D(i,1/2)}$, 
and the map $\eta = \gamma \circ \nu^2  = \left[ \begin{array}{cc} 2 & i \\ 0 & 1 \end{array} \right]$  
achieves the necessary change of coordinates.  One sees that 
\begin{equation*}
\varphi^{\eta}(z) \ = \ \frac{z^2-2iz}{2z+i} 
\end{equation*} 
indeed has good reduction. 
The nearest point to $\zeta_{D(i,1/2)}$ in the tree spanned by $\PP^1(\QQ_2)$ is $\zeta_{D(1,1/\sqrt{2})}$;
one sees that $\ordRes_\varphi(\zeta_{D(1,1/\sqrt{2})}) = 1$.  The nearest points in that tree with radii 
belonging to the value group $|\QQ_2^{\times}|$ 
(we call such points $\QQ_2$-rational type II points) 
are $\zeta_G$ and $\zeta_{D(1,1/2)}$; 
one has $\ordRes_\varphi(\zeta_G) = 2$ and  $\ordRes_\varphi(\zeta_{D(1,1/2)}) = 4$.
 
\smallskip
We next give several examples where $\varphi$ does not have potential good reduction.

\smallskip
\noindent{\bf Example 2.3}. \label{Example3} The function 
$\varphi(z) = \displaystyle{\frac{z^p-z}{p}}$\,, 
with $K = \CC_p$ for an arbitrary prime $p$. 

It is known (see \cite{B-R}, Example 10.120) 
that the Berkovich Julia set of $\varphi(z)$ is contained in $\PP^1(\CC_p)$
(indeed, it is precisely $\ZZ_p$),
and its invariant measure $\mu_\varphi$ is the additive Haar measure on $\ZZ_p$. 
Thus, $\varphi(z)$ cannot have potential good reduction; 
if it did, its Berkovich Julia set would be the unique point $Q \in \HH_{\Berk}$ 
where it attained good reduction.  
Below we will give a direct proof that $\varphi$ does not have potential good reduction.

Here $d = p$, and $\Res(\varphi) = p^p$,  so $\ordRes(\varphi) = p$.
The fixed points of $\varphi(z)$ are $\infty$ and the solutions $u_0, u_1, \cdots, u_{p-1}$ to $z^p - (1+p)z = 0$.   
Since $z^p - (1+p)z \equiv z^p - z \equiv z(z-1) \cdots (z-(p-1)) \pmod{p}$, 
Hensel's Lemma shows that each $u_i$ belongs to $\ZZ_p$, and we can label the $u_i$ so 
that $u_0 = 0$ and $u_i \equiv i \pmod{p}$ for $i = 1, \ldots, p-1$. 
The poles of $\varphi(z)$ are all at $\infty$.  The tree $\Gamma$ spanned by 
$\{\infty, 0, u_1, \ldots, u_{p-1} \}$ has $\zeta_G$ as its only branch point.

First consider $\ordRes_{\varphi}(\cdot)$ on the path $[0,\infty]$.  
As before, write $Q_A$ for  $\zeta_{D(0,|A|)}$; by (\ref{LinearFormula2}) 
\begin{eqnarray*} 
\ordRes_\varphi(Q_A) & = & p + (p^2 + p) \ord(A)
      - 2p \, \min\big(\ord(A^p), \ord(A), \ord(pA) \big) \\
          & = & \max\big(p+(p-p^2) \ord(A), p + (p^2 - p) \ord(A)) \ .  
\end{eqnarray*} 
The minimum is achieved when $\ord(A) = 0$, corresponding to $\ordRes_{\varphi}(\zeta_G) = p$.  

Next fix $i$ with $1 \le i \le p-1$, and consider $\ordRes_{\varphi}(\cdot)$ on the path $[u_i,\infty]$. 
Taking $\gamma  = \left[ \begin{array}{cc} 1 & u_i \\ 0 & 1 \end{array} \right]$, we see that 
\begin{equation*}
\varphi^\gamma(z) \  = \ \frac{(z+u_i)^p - (z+u_i) - pu_i}{p}
\ = \ \frac{a_p z^p + a_{p-1}z^{p-1} + \cdots + a_1 z}{p}
\end{equation*} 
where $a_p = 1$, $a_j = {\scriptsize \left( \! \begin{array}{c} p \\ j \end{array} \! \right) } u_i^{p-j}$ 
for $j = 2, \ldots, p-1$,
and $a_1 = p u_i^{p-1} - 1$.  In particular $\ord(a_p) = \ord(a_1) = 0$, and $\ord(a_j) = 1$ for $j =2, \ldots, p-1$.
By (\ref{LinearFormula2}) 
\begin{eqnarray*} 
\ordRes_\varphi(\gamma(Q_A)) & = & \ordRes_{\varphi^\gamma}(Q_A) \\ 
      & = & p + (p^2 + p) \ord(A) - 2p \, \min\big(\ord(a_p A^p), \cdots, \ord(a_1 A), \ord(pA) \big) \\
      & = & \max\big(p+(p-p^2) \ord(A), p + (p^2 - p) \ord(A)) \ .  
\end{eqnarray*} 
Again the minimum is achieved when $\ord(A) = 0$, corresponding to $\ordRes_{\varphi}(\zeta_G) = p$.
Thus $\MinRes(\varphi) = \{\zeta_G\}$, and $\varphi(z)$ does not have potential good reduction.
Here $\rho(\zeta_G,\zeta_G) = 0 < \frac{2}{d-1} \ordRes(\varphi) = 2p/(p-1)$.
Note that $\varphi(\zeta_G) = \zeta_{D(0,p)}$, so $\MinRes(\varphi)$ is not fixed by $\varphi$. 
  
\smallskip
\noindent{\bf Example 2.4}. \label{Example4} The function 
$\varphi(z) = \displaystyle{\frac{p^4 z^3  + pz + 1}{p^6 z^3}}$\,, 
where $K = \CC_p$\,, and $p$ is odd. 

Here $\Res(\varphi) = p^{18}$.  The function $\varphi(z)$ has a triple pole at $0$, and its fixed points 
are the roots of  $-p^6 z^4 + p^4 z^3 + p z + 1$.  By the theory of Newton Polygons, 
if the fixed points $u_1, \cdots, u_4$ are ordered by increasing size, 
then $|u_1| = p$, $|u_2| = |u_3| = p^{3/2}$, and $|u_4| = p^2$.  The tree $\Gamma$ spanned by 
$\{0, u_1, u_2, u_3, u_4\}$ has branch points at  $\zeta_{D(0,p)}$ and $\zeta_{D(0,p^{3/2})}$.  

Consider $\ordRes_\varphi(\cdot)$ on the path $[0,\infty]$; 
note that only the subsegment $[0,\zeta_{D(0,p^2)}]$ is contained in $\Gamma$.  
Let $Q_A \in \PP^1_{\Berk}$  be the point corresponding to $D(0,|A|)$. Then  
\begin{equation*} 
\ordRes_\varphi(Q_A) 
\ = \ \max\big(\!-\!18\!-\!12\,\ord(A), -6\!-\!6\,\ord(A), 12 + 6\,\ord(A), 18 + 12\,\ord(A)\big) \ .  
\end{equation*} 
This function achieves its minimum value of $3$ when $\ord(A) = -3/2$; 
it has breaks when $\ord(A) = -1$, $\ord(A) = -3/2$, and $\ord(A) = -2$. 

The initial segments of $[\zeta_{D(0,p^{3/2})},u_1]$ and  $[\zeta_{D(0,p^{3/2})},u_4]$ 
belong to $[0,\infty]$, so $\ordRes_{\varphi}(\cdot)$ is increasing along them.  To show that 
$\ordRes_{\varphi}(\cdot)$ achieves its minimum on $\PP^1_{\Berk}$ at $\zeta_{D(0,p^{3/2})}$,
it suffices to check that it is increasing along $[\zeta_{D(0,p^{3/2})},u_2]$ and  $[\zeta_{D(0,p^{3/2})},u_3]$. 

Take  $\gamma  = \left[ \begin{array}{cc} p^{3/2} & 0 \\ 0 & 1 \end{array} \right]$; conjugating $\varphi$
by $\gamma$ brings $\zeta_{D(0,p^{3/2})}$ to $\zeta_G$.  One finds that  
\begin{equation*}
\varphi^\gamma(z) \ = \ \frac{z^3 + z + p^{1/2}}{p^{1/2}z} \ .
\end{equation*} 
The fixed points of $\varphi^{\gamma}$ lie in the directions of $0$, $\pm i$ and $\infty$ at $\zeta_G$, 
where $i = \sqrt{-1}$; 
these correspond to the directions of $u_1$, $u_2$, $u_3$ and $u_4$ at $\zeta_{D(0,p^{3/2})}$, 
respectively.  Since $p$ is odd, the  directions of $\pm i$ at $\zeta_G$ are distinct.  Conjugating
$\varphi^{\gamma}$ by $\nu = \left[ \begin{array}{cc} 1 & i \\ 0 & 1 \end{array} \right]$ yields 
\begin{equation*}
\varphi^{\gamma \nu}(z) \ = \ \frac{(1- i p^{1/2})z^3 + (3+i)z^2 + (-2 + 3 i p^{1/2}) z}{p^{1/2}(z+i)^3} \ .
\end{equation*}
Since $\ord(-2 + 3i p^{1/2}) = 0$ when $p$ is odd, it follows from Lemma \ref{CriterionLemma}
(or directly from formula (\ref{LinearFormula2})), that $\ordRes_{\varphi}(\cdot)$ is increasing 
in the direction of $u_2$ at $\zeta_{D(0,p^{3/2})}$.  A similar argument applies for $u_3$. 

Thus $\varphi(z)$ does not have potential good reduction: $\MinRes(\varphi) = \{\zeta_{D(0,p^{3/2})}\}$, 
with  $\ordRes_\varphi(\zeta_{D(0,p^{3/2})}) = 3$.
Here $\varphi(\zeta_{D(0,p^{3/2})}) = \zeta_{D(0,p)}$, so $\MinRes(\varphi)$ is not fixed by $\varphi$. 
Note that $\rho(\zeta_G,\zeta_{D(0,p^{3/2})}) = 3/2 < \frac{2}{d-1} \ordRes(\varphi) = 12$.
Note also that 
$\ordRes_\varphi(\zeta_{D(0,p)}) = \ordRes_\varphi(\zeta_{D(0,p^2)}) = 6$. 

\smallskip
\noindent{\bf Example 2.5}. \label{Example5} The function 
$\varphi(z) = \displaystyle{\frac{p^n z^3  + z^2 - p^n z}{-p^n z^2 + z + p^n}}$  with $n > 0$ and  
$K = \CC_p$ for any  $p$. 

\smallskip
Here $\Res(\varphi) = -4p^{4n}$, so $\ordRes(\varphi) = 4n + 2 \ord(2)$.
The poles of $\varphi(z)$ are $\alpha_\pm = (1 \pm \sqrt{1+4p^{2n}})/(-2p^n)$, where 
\begin{equation*} 
\alpha_- = -p^n + p^{3n} + \cdots \ , \qquad  \alpha_+ = 1/p^n + p^n - p^{3n} + \cdots \ ,
\end{equation*}  
and the fixed points are $0$ and $\infty$.
The tree $\Gamma$ spanned by $\{0,\infty, \alpha_-, \alpha_+\}$ has branch points at  $\zeta_{D(0,1/p^n)}$ 
and $\zeta_{D(0,p^n)}$.  
    
First consider $\ordRes_\varphi(\cdot)$ on the path $[0,\infty]$.
Let $Q_A \in \PP^1_{\Berk}$  be the point corresponding to $D(0,|A|)$; then  
\begin{equation*} 
\ordRes_\varphi(Q_A) \ = \ 2 \ord(2) + \max\big(-2n - 6 \,\ord(A), 4n, -2n + 6 \,\ord(A)\big) \ .  
\end{equation*} 
This takes its minimum value of $4n + 2 \ord(2)$ for all $A$ with $\ord(A) \in [-n,n]$.  

Note that the paths $[\zeta_{D(0,1/p^n)},\alpha_+]$ and $[\zeta_{D(0,1/p^n)},-p^n]$ share an initial segment.  
Take $\gamma  = \left[ \begin{array}{cc} p^n & 0 \\ 0 & 1 \end{array} \right]$, 
$\nu = \left[ \begin{array}{cc} 1 & -1 \\ 0 & 1 \end{array} \right]$ 
and put $\eta = \gamma \circ \nu = \left[ \begin{array}{cc} p^n & -p^n \\ 0 & 1 \end{array} \right]$.  
Then  $\eta$ takes $[0,\infty]$ 
to the path $[-p^n,\infty]$, with $\eta(\zeta_G) = \zeta_{D(0,1/p^n)}$.  
One computes 
\begin{equation*} 
\varphi^{\eta}(z) \ = \ \big(\varphi^{\gamma}\big)^{\nu}(z) 
        \ = \ \frac{p^{2n} (z-1)^3 - p^{2n} (z-1)^2 + z^2  -z +1}{-p^{2n}(z-1)^2 + z} \ .
\end{equation*} 
If we write the numerator of $\varphi^{\eta}$ as $a_3 z^3 + a_2 z^2 + a_1 z + a_0$, 
then $\ord(a_2) = 0$, and it follows from (\ref{LinearFormula2}) that  
$\ordRes_{\varphi^{\eta}}(\cdot)$ is increasing in the 
direction $\vv_0$ at $\zeta_G$.  This means $\ordRes_{\varphi}(\cdot)$ is increasing 
in the direction $\vv_{\alpha_{+}}$ at $\zeta_{D(0,1/p^n)}$.  By a similar argument,
one sees that $\ordRes_{\varphi}(\cdot)$ is increasing 
in the direction $\vv_{\alpha_{-}}$ at $\zeta_{D(0,p^n)}$. 

Thus $\MinRes(\varphi)$ 
is the segment $[\zeta_{D(0,1/p^n)}, \zeta_{D(0,p^n)}]$,
and the minimal value of $\ordRes_\varphi(\cdot)$ is $4n + 2 \ord(2)$; in particular 
$\varphi(z)$ does not have potential good reduction.  
Each point of $[\zeta_{D(0,1/p^n)}, \zeta_{D(0,p^n)}]$ is fixed by $\varphi$:  
$\zeta_{D(0,p^\alpha)}$ is an indifferent fixed point for $-n < \alpha < n$,
and $\zeta_{D(0,1/p^n)}$ and $\zeta_{D(0,p^n)}$ are repelling fixed points of degree $2$. 
In this case $\MinRes(\varphi)$ is contained in $\{ z \in \HH_{\Berk} : \rho(\zeta_G,z) \le n\}$, 
while $\frac{2}{d-1} \ordRes(\varphi) = \frac{8}{3} n + \frac{4}{3} \ord(2)$.

\smallskip
For rationality considerations later, 
it will be useful to examine  some conjugates of $\varphi(z)$.  
For each $\gamma \in \GL_2(K)$, it is a formal consequence of the definitions that for all $Q$ 
\begin{equation} \label{Equicontinuity}
\ordRes_{\varphi^{\gamma}}(Q) \ = \ \ordRes_{\varphi}(\gamma(Q)) \ .
\end{equation}  
To show this, by continuity it is enough to check it for type II points.
Suppose $Q = \tau(\zeta_G)$ for some $\tau \in \GL_2(K)$. Then 
\begin{equation*}
\ordRes_{\varphi^{\gamma}}(Q) \ = \ \ordRes_{\varphi^{\gamma}}(\tau(\zeta_G)) 
\ = \ \ordRes_{\varphi}(\gamma(\tau(\zeta_G))) \ = \ \ordRes_{\varphi}(\gamma(Q)) \ .
\end{equation*}  

\smallskip
Take $u \in \CC_p$ with $|u| = 1$, 
and let $\gamma_1 = \left[ \begin{array}{cc} 1 & u \\ -1 & u \end{array} \right]$.
One easily sees that 
\begin{equation} \label{Variant1}
\varphi_1(z) \ := \ \varphi^{\gamma_1}(z) \ = \ \frac{- z^3  + (4p^n+1)u^2 z}{(4p^n-1) z^2 + u^2} \ ,
\end{equation} 
and that $\gamma_1(\zeta_{D(u,1/p^n)}) = \zeta_{D(0,p^n)}$ and  
$\gamma_1(\zeta_{D(-u,1/p^n)}) = \zeta_{D(0,1/p^n)}$.  It follows that 
$\MinRes(\varphi_1)$ is the segment $[\zeta_{D(-u,1/p^n)},\zeta_{D(u,1/p^n)}]$.
When $p$ is odd, the midpoint of this segment is $\zeta_G = \zeta_{D(0,1)}$.  When $p = 2$,
its midpoint is $\zeta_{D(u,1/2)}$.  

Next conjugate $\varphi_1(z)$ by $\gamma_2 = \left[ \begin{array}{cc} 1 & 0 \\ 0 & p^{1/2} \end{array} \right]$.
Then 
\begin{equation} \label{Variant2}
\varphi_2(z) \ := \ (\varphi_1)^{\gamma_2}(z) \ = \ 
\frac{- z^3 + (4p^n+1)u^2 p z}{(4p^n-1) z^2 + p u^2} \ ,
\end{equation}  
and $\MinRes(\varphi_2) = [\zeta_{D(-up^{1/2},1/p^{(n+1/2)})},\zeta_{D(up^{1/2},1/p^{(n+1/2)})}]$. 
When $p$ is odd, the midpoint of this segment is $\zeta_{D(0,p^{-1/2})}$. 
When $p = 2$, its midpoint is  $\zeta_{D(u 2^{1/2},2^{-3/2})}$.

\smallskip
\noindent{\bf Example 2.6}. \label{Example6} The function 
$\varphi(z) = \displaystyle{\frac{z^2}{(1 + pz)^4}}$\,, 
where  $K = \CC_p$ and $p \ge 5$.  
This function was studied by Favre and Rivera-Letelier (\cite{FRLErgodic}; or see \cite{B-R}, Example 10.124), 
who showed that its Berkovich Julia set is the segment $[\zeta_G,\zeta_{D(0,p^2)}]$ and that its
invariant measure $\mu_{\varphi}$ is the uniform measure of mass $1$ on that segment 
(relative to the path distance).  Here $\Res(\varphi) = p^8$.
The poles of $\varphi$ are all at $z = -1/p$, and the fixed points of $\varphi$ are $z = 0$
and the roots of $1 + (4p-1)z + 6p^2z^2 + 4 p^3 z^3 + p^4z^4 = 0$. 
By the theory of Newton polygons, these roots can be labeled so that
$|u_1| = 1$ and $|u_2| = |u_3| = |u_4| = p^{4/3}$.  The tree $\Gamma$ spanned by $\{0,-1/p, u_1, u_2, u_3, u_4\}$
has branch points at $\zeta_G$, $\zeta_{D(0,p)}$, and $\zeta_{D(0,p^{4/3})}$.

On the path $[0,\infty]$, we have
\begin{equation*}
\ordRes_{\varphi}(\zeta_{D(0,|A|)}) \ = \ \max\big(-24 - 20 \,\ord(A), 8 + 4\,\ord(A) \big) \ ,
\end{equation*}
which takes its minimum value of $8/3$ at $\ord(A) = 4/3$.  
Conjugating by $\gamma = \left[ \begin{array}{cc} p^{-4/3} & 0 \\ 0 & 1 \end{array} \right]$
gives 
\begin{equation} \label{Fixed43} 
\varphi^{\gamma}(z) \ = \ \frac{z^2}{z^4 + 4p^{1/3} z^3 + 6 p^{2/3} z^2 +4pz + p^{4/3}} \ .
\end{equation}
The fixed points  $u_2, u_3, u_4$ lie in the 
directions $\vv_{p^{4/3}}, \vv_{\zeta_3 p^{4/3}}, \vv_{\zeta_3^2 p^{4/3}}$ at $\zeta_{D(0,p^{4/3})}$, 
where $\zeta_3$ is a primitive cube root of unity,  
and it is easily checked that $\ordRes_{\varphi}(\cdot)$ is increasing in each of those directions.
Thus $\MinRes(\varphi) = \{\zeta_{D(0,p^{4/3})}\}$.   
Note that $\zeta_{D(0,p^{4/3})}$ is fixed by $\varphi$; indeed, by (\ref{Fixed43}), 
$\zeta_{D(0,p^{4/3})}$ is a repelling fixed point of $\varphi$ of degree $2$.   
Also note that $\rho(\zeta_G,\zeta_{D(0,p^{4,3})}) = 4/3 < \frac{2}{d-1} \ordRes(\varphi) = 16/3$.

\smallskip
\noindent{\bf Example 2.7}. \label{Example7} The function 
$\varphi(z) = \displaystyle{\frac{pz^3+z^2}{p}}$\,, 
with $K = \CC_p$ for an arbitrary prime $p$. 

Here $\Res(\varphi) = p^6$.
The fixed points of $\varphi(z)$ are $0$, $\infty$ and the solutions $u_1, u_2$ to $pz^2 + z - p = 0$:  
\begin{equation*}
u_1 = p + p^3 + \cdots, \qquad  u_2 = -p^{-1} -p -p^3 + \cdots 
\end{equation*}  
so that $|u_1| = 1/p$, $|u_2| = p$.  The poles of $\varphi(z)$ are all at $\infty$. 
 The tree $\Gamma$ spanned by 
$\{0, \infty, u_1, u_2 \}$ has branch points at $\zeta_{D(0,1/p)}$ and $\zeta_{D(0,p)}$.
   
First consider $\ordRes_{\varphi}(\cdot)$ on  $[0,\infty]$.  
We have  
\begin{eqnarray*} 
\ordRes_\varphi(\zeta_{D(0,|A|)}) & = & 6 + 12 \ord(A) - 6 \, \min\big(\ord(pA^3), \ord(A^2), \ord(pA) \big) \\
          & = & \max\big(-6 \ord(A), 6, 6 + \ord(A)) \ .  
\end{eqnarray*} 
This takes the constant value $6$ when $-1 \le \ord(A) \le 0$.  
By convexity, the minimum value of  $\ordRes_{\varphi}(\cdot)$ on $\PP^1_{\Berk}$ is $6$,
and $\MinRes(\varphi)$ contains the segment $[\zeta_G,\zeta_{D(0,p)}]$.  

To see that $\MinRes(\varphi)$ contains no other points, note that the path $[\zeta_{D(0,p)},u_2]$
shares an initial segment with $[\zeta_{D(0,p)},p^{-1}]$.  Conjugating $\varphi$ by 
$\gamma = \left[ \begin{array}{cc} 1/p & 1/p \\ 0 & 1 \end{array} \right]$,
which takes $0$ to $p^{-1}$ and $\zeta_G$ to $\zeta_{D(0,1/p)}$, yields
$\varphi^{\gamma}(z) = (z^3 + 4z^2 + 5z + (2-p^2))/p^2$.
One computes
\begin{eqnarray*}
& & \ordRes_{\varphi^{\gamma}}(\zeta_{D(0,|A|)}) \\ 
& & \qquad  \ = \ \max\big(6 -6 \,\ord(A), 6 - 6 \, \ord(5) + 6 \, \ord(A), 6 - 6 \,\ord(2) + 12 \, \ord(A)) \ .  
\end{eqnarray*} 
Since either $\ord(5) = 0$ or $\ord(2) = 0$, the right side is increasing for small positive values of $\ord(A)$.
Thus $\ordRes_{\varphi}(\cdot)$ is increasing along $[\zeta_{D(0,p)},u_2]$, 
and $\MinRes(\varphi) = [\zeta_G,\zeta_{D(0,p)}]$. 
Here $\MinRes(\varphi)$ is contained in $\{ z \in \HH_{\Berk} : \rho(\zeta_G,z) \le 1\}$, 
while $\frac{2}{d-1} \ordRes(\varphi) = 6$.  
For $0 \le \alpha \le 1$, we have 
$\varphi(\zeta_{D(0,p^{\alpha})}) = \zeta_{D(0,p^{2 \alpha + 1})}$, 
so no point of $\MinRes(\varphi)$ is fixed by $\varphi$.  

\section{Discussion, Applications, and Questions} \label{DiscApplicSection} 

Examples 2.1, 2.2, and 2.6 show that $\MinRes(\varphi)$ 
need not be contained in the tree spanned by the 
fixed points alone, or the poles alone.  Examples 2.3, 2.5, and 2.7 show that when $d$ is odd, 
$\MinRes(\varphi)$ can be either a point or a segment.  

When $\varphi$ has potential good reduction, 
$\MinRes(\varphi)$ consists of a single point, 
which is necessarily fixed by $\varphi$.
When $\varphi$ does not have potential good reduction, $\MinRes(\varphi)$ may or may not contain fixed points.
In Example 2.6 it consists of a single point, which is fixed. In Example 2.5, it consists of a segment,
which is pointwise fixed.  In Examples 2.3 and 2.4, it consists of a single point, which is not fixed; 
in Example 2.7, it consists of a segment, of which no point is fixed.

In the examples, $\MinRes(\varphi)$ lies well inside  
$\{z \in \HH_{\Berk} : \rho(\zeta_G,z) \le \frac{2}{d-1} \ordRes(\varphi)\}$. 
Probably the radius $\frac{2}{d-1} \ordRes(\varphi)$ given by Theorem \ref{MainThm} is not sharp.  
 
\medskip
{\bf Rationality Considerations.}  Let $H$ be a subfield of $K$.  Throughout this subsection, we 
will assume $\varphi(z) \in H(z)$.

We will say that a point $Q \in \PP^1_{\Berk}$ is {\em rational over $H$} if it is type I point in $\PP^1(H)$ 
or is a type II point corresponding to a disc $D(b,r)$ with $b \in H$ and radius $r \in |H^{\times}|$. 
A type II point is rational over $H$ if and only if it belongs to the tree spanned by $\PP^1(H)$
and corresponds to a disc with radius $r \in |H^{\times}|$.  
The following proposition shows the $H$-rational type II points are those which can be reached
from $\zeta_G$ by an element of $\GL_2(H)$;  it also shows that the notion of $H$-rationality
for type II points is invariant under $H$-rational changes of coordinates.

\begin{proposition} \label{RationalityEquiv} A type II point $Q$ is rational over $H$ 
if and only if $Q = \gamma(\zeta_G)$
for some $\gamma \in \GL_2(H)$.
\end{proposition}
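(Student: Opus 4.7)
My plan is to handle the two directions separately. The converse direction ($\Leftarrow$) is immediate: given an $H$-rational type II point $Q = \zeta_{D(b,r)}$ with $b \in H$ and $r = |a|$ for some $a \in H^{\times}$, the matrix $\gamma = \left[ \begin{array}{cc} a & b \\ 0 & 1 \end{array} \right] \in \GL_2(H)$ realizes the affine map $z \mapsto az + b$, which carries the unit disc $D(0,1)$ onto $D(b,|a|)$, so $\gamma(\zeta_G) = Q$.

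For the forward direction, I would show that every $\gamma \in \GL_2(H)$ carries $H$-rational type II points to $H$-rational type II points, and then apply this to $\zeta_G$. I split on whether $\gamma(\infty) = \infty$. In the affine case $\gamma(z) = az + b$ with $a \in H^{\times}$, $b \in H$, the image of $\zeta_G$ is the obviously $H$-rational point $\zeta_{D(b,|a|)}$. Otherwise, writing $\gamma(z) = (Az+B)/(Cz+D)$ with $C \neq 0$ and $A,B,C,D \in H$, the partial-fraction identity
\begin{equation*}
\gamma(z) \ = \ \frac{A}{C} + \frac{BC-AD}{C^2} \cdot \frac{1}{z + D/C}
\end{equation*}
exhibits $\gamma$ as the composition $T_\alpha \circ M_\beta \circ J \circ T_\delta$ of translations $T_c(z) = z+c$, a scaling $M_c(z) = cz$, and the inversion $J(z) = 1/z$, with parameters $\alpha, \delta \in H$ and $\beta \in H^{\times}$. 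Each factor lies in $\GL_2(H)$.

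It therefore suffices to verify that $T_c$ for $c \in H$, $M_c$ for $c \in H^{\times}$, and the inversion $J$ each preserve $H$-rationality of type II points. For translations and scalings this is immediate from the formulas $T_c(\zeta_{D(a,r)}) = \zeta_{D(a+c,r)}$ and $M_c(\zeta_{D(a,r)}) = \zeta_{D(ca,|c|r)}$. The only step requiring thought is the inversion. For an $H$-rational point $\zeta_{D(a,r)}$: if $|a| > r$, so that $0 \notin D(a,r)$, the ultrametric inequality gives $J(D(a,r)) = D(1/a, r/|a|^2)$, which is $H$-rational; if $|a| \le r$, then $\zeta_{D(a,r)} = \zeta_{D(0,r)}$ lies on the geodesic $[0,\infty]$, and since the matrix $\left[ \begin{array}{cc} 0 & 1 \\ 1 & 0 \end{array} \right]$ of $J$ lies in $\GL_2(\cO)$ and hence fixes $\zeta_G$ (Proposition~\ref{GL_2Prop}(A)), while $J$ swaps $0$ and $\infty$ and acts isometrically on $\HH_\Berk$ (Proposition~\ref{GL_2Prop}(B)), it must send $\zeta_{D(0,r)}$ to $\zeta_{D(0,1/r)}$, again $H$-rational. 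The only real obstacle I anticipate is this inversion step for discs containing zero, where the naive disc-image formula breaks down; once that is taken care of, everything reduces to concatenating formulas.
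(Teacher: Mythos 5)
Your proof is correct, and the easy implication (an $H$-rational $Q=\zeta_{D(b,|a|)}$ is hit by $\left[\begin{smallmatrix} a & b \\ 0 & 1\end{smallmatrix}\right]$) is word-for-word the paper's. For the other implication, however, you take a genuinely different route. The paper normalizes $\gamma$ itself: it multiplies $\gamma$ on the \emph{right} by elements of $\GL_2(\cO_H)$ (a column swap and a shear), which lie in the stabilizer of $\zeta_G$ and hence do not move $\gamma(\zeta_G)$, until $\gamma$ is upper triangular, at which point the disc $D(b_1/d_1,|a_1/d_1|)$ can be read off directly; no computation of how a Möbius map acts on discs is ever needed. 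You instead factor $\gamma$ on the \emph{left} into translations, dilations, and the inversion $J$, and verify that each factor preserves $H$-rationality of type II points — the only nontrivial case being $J$ on a disc containing $0$, which you handle correctly by combining Proposition~\ref{GL_2Prop}(A)--(C) (note that you do implicitly need part (C), that $J$ carries $[0,\zeta_G]$ to $[\infty,\zeta_G]$, to rule out $J$ fixing the segment pointwise). Your version proves the formally stronger statement that $\GL_2(H)$ preserves the whole set of $H$-rational type II points, but that already follows from the proposition by composing, so the extra generality is not a real gain; the paper's normalization trick is shorter and avoids the case analysis, while yours is more explicit about the geometry of the action. (One cosmetic remark: you have swapped the labels of the two directions — what you call the converse is the forward implication of the statement — but both implications are in fact established.)
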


\begin{proof} 
If $Q$ is rational over $H$, it corresponds to a disc $D(b,|a|)$ where $b \in H$
and $a \in H^{\times}$.  Put $\gamma = \left[ \begin{array}{cc} a & b \\ 0 & 1 \end{array} \right] \in \GL_2(H)$;
then $Q = \gamma(\zeta_G)$.  Conversely, suppose $Q = \gamma(\zeta_G)$ where 
$\gamma = \left[ \begin{array}{cc} a & b \\ c & d \end{array} \right] \in \GL_2(H)$.  
Write $\cO_H$ for the ring of integers of $H$. Multiplying $\gamma$ on the 
right by $\left[ \begin{array}{cc} 0 & 1 \\ 1 & 0 \end{array} \right] \in \GL_2(\cO_H)$ interchanges
the columns of $\gamma$, so without loss we can assume  $|c| \le |d|$.  Then, multiplying $\gamma$
on the right by $\left[ \begin{array}{cc} 1 & 0 \\ -c/d & 1 \end{array} \right] \in \GL_2(\cO_H)$ 
brings it to the form $\left[ \begin{array}{cc} a_1 & b_1 \\ 0 & d_1 \end{array} \right] \in \GL_2(H)$.
Since $\GL_2(\cO_H)$ stabilizes $\zeta_G$, $Q$ corresponds to the disc $D(b_1/d_1,|a_1/d_1|)$, 
and is rational over $H$.       
\end{proof} 

Let $\Aut^c(K/H)$ be the group of continuous automorphisms 
of $K$ fixing $H$.  The  natural action of $\Aut^c(K/H)$ on $\PP^1(K)$ 
extends to an action on $\PP^1_{\Berk}$ which preserves the type of each point. 
On points of type II or III, the action can be described as follows: if $\sigma \in \Aut^c(K/H)$ and $Q$ 
corresponds to the disc $D(b,r)$, then $\sigma(Q)$ corresponds to $D(\sigma(b),r)$.
The image disc is well-defined, since for any $b^{\prime} \in K$ with $D(b^{\prime},r) = D(b,r)$ we have 
$|\,\sigma(b^{\prime}) - \sigma(b)| = |\,b^{\prime} - b| \le r$. 
For a point $Q$ of type IV,
if $Q$ corresponds to a sequence of nested discs $\{D(a_i,r_i\}_{i \ge 0}$ under Berkovich's classification theorem,
then $\sigma(Q)$ corresponds to the sequence of nested discs $\{D(\sigma(a_i),r_i)\}_{i \ge 0}$.

If a type II point is rational over $H$, clearly it is fixed by each $\sigma \in \Aut^c(K/H)$.
However, the converse is not true:  a type II point fixed by $\Aut^c(K/H)$ need not be $H$-rational.
Indeed, each point in the tree spanned by $\PP^1(H)$,
with radius in $|H^{\times}|$ or not, is fixed by $\Aut^c(K/H)$.
There can also be type II points in $\PP^1_\Berk$ outside the tree spanned by $\PP^1(H)$
which are fixed by $\Aut^c(K/H)$.
For example, if $H = \QQ_2$ and $K = \CC_2$, then $\zeta_{D(i,1/2)}$ is fixed by $\Aut^c(\CC_2/\QQ_2)$
since $|\,\sigma(i) - i| \le 1/2$ for each $\sigma \in \Aut^c(\CC_2/\QQ_2)$.  
However $D(i,1/2) \cap \QQ_2$ is empty: $|x-i| \ge 1/\sqrt{2}$ for each $x \in \QQ_2$.  
Thus $\zeta_{D(i,1/2)}$ is not in the tree spanned by $\PP^1(\QQ_2)$.  

The action of $\sigma \in \Aut^c(K/H)$ on $\PP^1_\Berk$ is continuous for the strong topology: 
indeed, the description of the action shows that for 
all $x, y \in \HH_\Berk$, one has $\rho(\sigma(x),\sigma(y)) = \rho(x,y)$.  
It follows that $\sigma$ takes paths to paths: if $[x,y]$ is a path with endpoints in $\HH_\Berk$, 
then for each $Q \in \HH^1_\Berk$ we have $Q \in [x,y]$ iff $\rho(x,y) = \rho(x,Q) + \rho(Q,y)$;
thus $Q \in [x,y]$ iff $\sigma(Q) \in [\sigma(x),\sigma(y)]$.  If $[x,y]$ has one or both endpoints in $\PP^1(K)$,
it can be exhausted by an increasing sequence of paths with endpoints in $\HH_\Berk$, so we still have
$\sigma([x,y]) = [\sigma(x),\sigma(y)]$.

\begin{proposition} \label{GaloisGL2} For all $\varphi(z) \in K(z)$, all $\sigma \in \Aut^c(K/H)$, 
and all $Q \in \PP^1_\Berk$, we have $\sigma(\varphi(Q)) = (\sigma(\varphi))(\sigma(Q))$.
\end{proposition}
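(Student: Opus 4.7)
The plan is to first verify the identity on the dense subset $\PP^1(K)$ by a direct algebraic calculation, and then extend it to all of $\PP^1_\Berk$ by continuity. Fix a homogeneous representation $(F,G)$ of $\varphi$, and let $(\sigma F, \sigma G)$ denote the pair obtained by applying $\sigma$ coefficient-wise; this is a representation of $\sigma(\varphi)$. For $z \in \PP^1(K)$ it is elementary that $\sigma(F(z,1)) = (\sigma F)(\sigma(z),1)$ (and similarly for $G$), since $\sigma$ is a ring homomorphism; passing to the quotient (and handling poles via the homogeneous coordinates on $\PP^1$) gives $\sigma(\varphi(z)) = (\sigma(\varphi))(\sigma(z))$ for every $z \in \PP^1(K)$.

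Next I would show that both composite maps $\sigma \circ \varphi$ and $(\sigma(\varphi)) \circ \sigma$ are continuous endomorphisms of $\PP^1_\Berk$ for the weak topology. The continuity of $\varphi$ and of $\sigma(\varphi)$ on $\PP^1_\Berk$ for the weak topology is recalled in the paper. For $\sigma$, the most natural way is to use its description on multiplicative seminorms: the point $\zeta \in \PP^1_\Berk$ corresponds to a seminorm $|\cdot|_\zeta$ on $K[X,Y]$ (up to the usual normalization), and the definition forces $|f|_{\sigma(\zeta)} = |\sigma^{-1}(f)|_\zeta$ for every $f \in K[X,Y]$. Hence each evaluation functional $\zeta \mapsto |f|_{\sigma(\zeta)}$ is continuous, which is precisely weak continuity of $\sigma$. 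Alternatively, one can verify weak continuity directly using Berkovich's classification, checking that $\sigma$ sends the nested disc chain $\{D(a_i,r_i)\}$ for a type IV point to $\{D(\sigma(a_i),r_i)\}$ and that this operation respects weak limits; this is essentially the isometry $\rho(\sigma(x),\sigma(y)) = \rho(x,y)$ already established in the excerpt, together with the fact that $\sigma$ preserves Berkovich types.

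Once both sides are known to be weakly continuous and to agree on the weakly dense set $\PP^1(K)$, they agree on all of $\PP^1_\Berk$, which is the claim. The only step that requires any care is the weak continuity of $\sigma$, since the preceding paragraph in the excerpt only established its $\rho$-continuity on $\HH_\Berk$; this is the main, but modest, obstacle, and the seminorm argument above handles it cleanly.
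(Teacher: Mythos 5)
Your proof is correct, but it takes a genuinely different route from the paper's. The paper argues type by type: the type I case is the same coefficient-twisting computation you give; the type II case is then deduced from the type I case together with the description of $\varphi(\zeta_{D(b,r)})$ in terms of the images of generic type I points of $D(b,r)$ (\cite{B-R}, Proposition 2.18); and types III and IV follow from the type II case by continuity, where it suffices to use continuity for the strong topology on $\HH_\Berk$, in which the type II points are dense. You instead run a single density argument off $\PP^1(K)$, which is dense only for the \emph{weak} topology, so you must establish weak continuity of $\sigma$ --- something the paper never needs, since it only records the $\rho$-isometry property of $\sigma$ on $\HH_\Berk$. Your seminorm identity $|f|_{\sigma(\zeta)} = |\sigma^{-1}(f)|_{\zeta}$ is the right tool and does yield weak continuity, once one checks that it is consistent with the paper's disc-based definition of the action of $\sigma$ (a short sup-over-discs computation using that a continuous automorphism of $K$ over $H$ is an isometry). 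One caution: the ``alternative'' verification you sketch, resting on $\rho(\sigma(x),\sigma(y))=\rho(x,y)$ and preservation of types, only gives continuity for the strong topology, which is finer than the weak topology and therefore does not by itself imply weak continuity; the seminorm argument is the one to keep. On balance, your approach is more uniform --- one argument disposes of types II, III, and IV simultaneously and avoids invoking the generic-point description of $\varphi$ on discs --- at the cost of proving the extra weak-continuity lemma for $\sigma$, while the paper's proof is shorter because it leans on a citable structural fact about the action of rational maps on $\PP^1_{\Berk}$.
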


\begin{proof} Given $\varphi(z) \in K(z)$ and $\sigma \in \Aut^c(K/H)$,
if $Q$ is of type I the assertion is clear.  If $Q$ is of type II and corresponds to a disc $D(b,r)$, 
the assertion follows from the case of type I points and the description of the action of $\varphi$ 
on generic type I points in $D(b,r)$ given in (\cite{B-R}, Proposition 2.18).  Finally, if $Q$ is of type III
or IV, the assertion follows from the case of type II points and continuity.     
\end{proof} 

In particular, $\sigma(\gamma(Q)) = \gamma(\sigma(Q))$ 
for all $\gamma \in \GL_2(H)$ and $\sigma \in \Aut^c(K/H)$.  
This shows the action of $\Aut^c(K/H)$ on $\PP^1_\Berk$ is independent of $H$-rational changes of coordinates.

\smallskip
We will say that a subset $X \subset \PP^1_{\Berk}$
is {\em stable under $\Aut^c(K/H)$} if $\sigma(x) \in X$ for each $x \in X$ and $\sigma \in \Aut^c(K/H)$, 
that $X$ is {\em pointwise fixed by $\Aut^c(K/H)$} 
if $\sigma(x) = x$ for each $x \in X$ and $\sigma \in \Aut^c(K/H)$. 

\begin{proposition} \label{GaloisFixedPt}
If $\varphi$ is rational over a subfield $H \subset K$, then 
$\MinRes(\varphi)$ is stable under $\Aut^c(K/H)$, and it 
contains at least one point fixed by $\Aut^c(K/H)$.   
However, $\MinRes(\varphi)$  need not contain points of the tree spanned by $\PP^1(H)$,
and it need not be pointwise fixed by $\Aut^c(K/H)$.  
On the other hand, if $\deg(\varphi)$ is odd, $\MinRes(\varphi)$ 
can contain arbitrarily many $H$-rational type II points.     
\end{proposition}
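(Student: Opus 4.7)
The plan is to handle the four claims of the proposition in turn, using Proposition \ref{GaloisGL2} and the Galois-equivariance of the resultant for the stability, a midpoint argument for the existence of a fixed point, and the examples of Section \ref{ExamplesSection} for the remaining assertions. For stability, I first argue on type II points $Q = \gamma(\zeta_G)$ and extend by continuity. Since $\sigma(D(0,1)) = D(0,1)$ we have $\sigma(\zeta_G) = \zeta_G$, and Proposition \ref{GaloisGL2} applied to the M\"obius map $\gamma \in K(z)$ gives $\sigma(Q) = \sigma(\gamma)(\zeta_G)$, where $\sigma(\gamma) \in \GL_2(K)$ is the entrywise Galois image of $\gamma$. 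Since $\sigma$ commutes with composition of rational functions and fixes $\varphi$, one checks $\varphi^{\sigma(\gamma)} = \sigma(\varphi^{\gamma})$. Applying $\sigma$ entrywise to a normalized representation $(F,G)$ of $\varphi^{\gamma}$ produces a normalized representation of $\sigma(\varphi^{\gamma})$ (because $\sigma$ preserves $\ord$), and because the resultant (\ref{FGResultant}) is a polynomial in the coefficients, $\ord(\Res(\sigma(F),\sigma(G))) = \ord(\Res(F,G))$. Hence $\ordRes_{\varphi}(\sigma(Q)) = \ordRes_{\varphi}(Q)$ for type II $Q$; by density of type II points together with the joint strong continuity of $\ordRes_{\varphi}(\cdot)$ (Proposition \ref{ExtensionCor}) and of the $\sigma$-action, this identity extends to all of $\PP^1_{\Berk}$, and $\MinRes(\varphi)$ is $\Aut^c(K/H)$-stable. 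This is the step I expect to be the main obstacle, since the $H$-rationality of $\varphi$ is used essentially to identify $\varphi^{\sigma(\gamma)}$ with $\sigma(\varphi^{\gamma})$ before transporting $\ord$-invariance through the resultant.

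For the existence of a fixed point in $\MinRes(\varphi)$, I split on the shape given by Theorem \ref{MainThm}. If $\MinRes(\varphi)$ is a single type II point, it is trivially fixed. Otherwise $d$ is odd and $\MinRes(\varphi) = [P,Q]$ is a segment with type II endpoints $P, Q$. The pair $\{P,Q\}$ is characterized topologically as the non-cut points of the arc and is therefore $\Aut^c(K/H)$-stable, so each $\sigma$ either fixes both endpoints or swaps them. Because $\sigma$ is a $\rho$-isometry preserving $[P,Q]$ setwise, the midpoint $M$ of $[P,Q]$ — the unique point on $[P,Q]$ at $\rho$-distance $\rho(P,Q)/2$ from each endpoint — satisfies $\sigma(M) = M$ in both sub-cases, giving a point of $\MinRes(\varphi)$ fixed by the whole group.

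The three remaining statements are supplied by the examples of Section \ref{ExamplesSection}. For ``$\MinRes(\varphi)$ need not meet the tree spanned by $\PP^1(H)$'', take Example 2.2 with $H = \QQ_2$: $\MinRes(\varphi) = \{\zeta_{D(i,1/2)}\}$, and as already noted before Proposition \ref{GaloisGL2}, $D(i,1/2) \cap \QQ_2 = \varnothing$, so this point lies outside the $\PP^1(\QQ_2)$-tree. For ``$\MinRes(\varphi)$ need not be pointwise fixed'', take the map $\varphi_2$ of Example 2.5 with $u = 1$, which is $\QQ_p$-rational of odd degree $3$; the endpoints $\zeta_{D(\pm\sqrt{p},\,p^{-(n+1/2)})}$ of the segment $\MinRes(\varphi_2)$ are interchanged by the nontrivial element of $\Gal(\QQ_p(\sqrt{p})/\QQ_p)$ extended to $\Aut^c(\CC_p/\QQ_p)$. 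Finally, for ``arbitrarily many $H$-rational type II points when $d$ is odd'', take the $\varphi$ of Example 2.5 itself with $H = \QQ_p$: its $\MinRes(\varphi) = [\zeta_{D(0,p^{-n})},\,\zeta_{D(0,p^n)}]$ contains the $2n+1$ distinct $\QQ_p$-rational type II points $\zeta_{D(0,p^k)}$ for $-n \le k \le n$, a count that grows without bound as $n \to \infty$.
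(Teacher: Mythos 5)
Your proof is correct and follows essentially the same route as the paper: the Galois-invariance of $\ordRes_\varphi(\cdot)$ for stability (which you justify in more detail than the paper, via Proposition \ref{GaloisGL2}), the midpoint/endpoint-swap argument for the fixed point, and the examples of \S2 for the remaining claims. The only cosmetic difference is that for non-pointwise-fixedness you use $\varphi_2$ with $u=1$ and the $\sqrt{p}\mapsto-\sqrt{p}$ swap (fine for $p$ odd, where $|2\sqrt{p}|=p^{-1/2}>p^{-(n+1/2)}$ guarantees the endpoints are distinct), whereas the paper uses $\varphi_1$ with $u=\sqrt{a}$ for a non-square unit $a$ — both work.
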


\begin{proof}
If $\varphi$ is rational over $H$, then $\ordRes_{\varphi}(\sigma(Q)) = \ordRes_{\varphi}(Q)$ 
for all $\sigma \in \Aut^c(K/H)$ and all $Q \in \PP^1_{\Berk}$.  
Thus, $\MinRes(\varphi)$ is stable under $\Aut^c(K/H)$.  To see that $\MinRes(\varphi)$ always
contains at least one point fixed by $\Aut^c(K/H)$, note that if $\MinRes(\varphi)$ consists of a single 
point, $\Aut^c(K/H)$ fixes that point.  On the other hand, if $\MinRes(\varphi)$ is a segment, 
then since $\Aut^c(K/H)$ preserves path distances, 
each $\sigma \in \Aut^c(K/H)$ must either leave $\MinRes(\varphi)$ pointwise
fixed, or flip it end-to-end;  in either case $\sigma$ fixes the midpoint of $\MinRes(\varphi)$.  

Example 2.2, with $\varphi(z) = (z^2-z)/(2z)$ and $H = \QQ_2$,  
shows that $\MinRes(\varphi)$ can be pointwise fixed by $\Aut^c(K/H)$ 
without meeting the tree spanned by $\PP^1(H)$: $\MinRes(\varphi) = \{\zeta_{D(i,1/2)}\}$, 
and $\zeta_{D(i,1/2)}$ does not belong to the tree spanned by $\PP^1(H)$, as shown above.  
It would be interesting to know how far off the tree $\MinRes(\varphi)$ can lie.  
 
Example 2.5,  with 
$\varphi(z) = \displaystyle{\frac{p^n z^3  + z^2 - p^n z}{-p^n z^2 + z + p^n}}$ and $H = \QQ_p$, 
shows that when $d = \deg(\varphi)$ is odd, $\MinRes(\varphi)$ can contain arbitrarily many type II 
points rational over $H$:  in that example $\MinRes(\varphi)$ is a segment of path-length $2n$ contained 
in the path $[0,\infty]$ with $H$-rational endpoints.  

It is also possible for $\MinRes(\varphi)$ to be a segment 
``orthogonal to'' the tree spanned by $\PP^1(H)$:  take $H = \QQ_p$ with $p$ odd.  
If $a \in \ZZ_p^{\times}$ is a non-square unit, 
and $u = \sqrt{a}$, then the function 
$\displaystyle{\varphi_1(z) = \frac{- z^3  + (4p^n+1)u^2 z}{(4p^n-1) z^2 + u^2}}$ 
from Example 2.5 is $\QQ_p$-rational.
Its minimal resultant locus is $[\zeta_{D(-u,1/p^n)},\zeta_{D(u,1/p^n)}]$, 
which meets the tree spanned by $\PP^1(\QQ_p)$ only at the $\QQ_p$-rational type II point $\zeta_G$.   
Likewise, the function 
$\displaystyle{\varphi_2(z) = \frac{- z^3 + (4p^n+1)u^2 p z}{(4p^n-1) z^2 + p u^2}}$
from Example 2.5 is $\QQ_p$-rational. Its minimal resultant locus 
meets the tree spanned by $\PP^1(\QQ_p)$ at $\zeta_{D(0,p^{-1/2})}$, 
but that point is not $\QQ_p$-rational 
because its radius does not belong to $|\QQ_2^{\times}|$.  In both examples, 
each $\sigma \in \Aut^c(K/H)$ with $\sigma(\sqrt{a}) = -\sqrt{a}$ 
flips $\MinRes(\varphi)$ end-to-end;  the midpoint of $\MinRes(\varphi)$ 
is the only point fixed by $\Aut^c(K/H)$.
\end{proof}

Now assume that $H$ is discretely valued:  
in this case, the $H$-rational type II points are discrete in $\HH_\Berk$ for the strong topology,
and the subtree of  $\PP^1_{\Berk}$ spanned by $\PP^1(H)$ 
is branched at precisely the $H$-rational type II points.

If $Q$ is a type II point rational over $H$, the action 
of $\Aut^c(K/H)$ on $\PP^1_\Berk$ induces an action of $\Aut^c(K/H)$ on the tangent space $T_Q$, which takes
the class of a path $[Q,x]$ to the class of $[Q,\sigma(x)]$.  This is well-defined, since if $x$ and $x^{\prime}$ 
belong to the same tangent direction at $Q$, then the paths $[Q,x]$ and $[Q,x^{\prime}]$ share an initial segment;
thus $[Q,\sigma(x)]$ and $[Q,\sigma(x^{\prime})]$ share an initial segment as well.  

\smallskip
The following proposition shows that if $\varphi$ is rational over $H$, 
and if $Q \notin \MinRes(\varphi)$ is a type II point rational over $H$, 
then $\MinRes(\varphi)$ lies in a tangent direction at $Q$ fixed by $\Aut^c(K/H)$.
When $H = H_v$ is a local field, we will use this 
in giving a steepest descent algorithm for finding 
an $H_v$-rational point where $\ordRes_\varphi(\cdot)$ is minimal for $H_v$-rational points.

\begin{proposition} \label{SteepestDescentProp} 
Suppose $H_v$ is a local field and $\varphi$ is rational over $H_v$.  
Let $Q$ be an $H_v$-rational type II point not contained in $\MinRes(\varphi)$. 
Then $\MinRes(\varphi)$ lies in a tangent direction at $Q$
coming from the tree spanned by $\PP^1(H_v)$.  
\end{proposition}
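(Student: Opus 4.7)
The plan is to combine three ingredients: uniqueness of the direction of local decrease for $\ordRes_\varphi(\cdot)$ at $Q$; Galois invariance of this function under $\Aut^c(K/H_v)$; and an identification of $\Aut^c(K/H_v)$-fixed tangent directions at $Q$ with $H_v$-rational directions.

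First, I would exploit convexity. Since $Q \notin \MinRes(\varphi)$, pick any $P \in \MinRes(\varphi)$; along the path $[Q,P]$ the function $\ordRes_\varphi(\cdot)$ is strictly decreasing near $Q$ by Theorem \ref{MainThm} and the fact that its minimum is attained at $P$. By the principle of steepest descent (the remark following Lemma \ref{TwoDirsCor}, which is immediate from convexity-upwards on paths through $Q$), there is a unique direction $\vv \in T_Q$ in which $\ordRes_\varphi(\cdot)$ is locally decreasing. Consequently every path from $Q$ to a point of $\MinRes(\varphi)$ begins in this single direction $\vv$, and $\MinRes(\varphi)$ is contained in the corresponding component of $\PP^1_\Berk \setminus \{Q\}$.

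Next, I would invoke Galois invariance. Fix a normalized representation $(F,G)$ of $\varphi$ with $F,G \in H_v[X,Y]$. Since $\Res(F^\gamma,G^\gamma)$ depends polynomially on the entries of $\gamma$ and the coefficients of $F, G$, one gets $\ordRes_\varphi(\sigma(\xi)) = \ordRes_\varphi(\xi)$ for every $\sigma \in \Aut^c(K/H_v)$ and every $\xi \in \PP^1_\Berk$ (this is implicit in the proof of Proposition \ref{GaloisFixedPt}). Since $Q$ is $H_v$-rational, $\sigma(Q)=Q$, so $\sigma$ permutes $T_Q$ and preserves the set of directions in which $\ordRes_\varphi(\cdot)$ is locally decreasing. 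By the uniqueness from Step 1, $\sigma(\vv)=\vv$ for every $\sigma \in \Aut^c(K/H_v)$.

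Finally, I would identify Galois-fixed directions in $T_Q$ with $H_v$-rational ones. By Proposition \ref{RationalityEquiv}, write $Q=\gamma(\zeta_G)$ with $\gamma \in \GL_2(H_v)$. Proposition \ref{GaloisGL2} gives $\sigma \circ \gamma = \gamma \circ \sigma$ on $\PP^1_\Berk$, so the bijection $\gamma_*\colon T_{\zeta_G} \to T_Q$ is equivariant for $\Aut^c(K/H_v)$. It therefore suffices to treat $Q=\zeta_G$, where directions correspond bijectively to $\PP^1(\tk)$, and $\Aut^c(K/H_v)$ acts on $\tk = \overline{\tk_v}$ through the surjection onto $\Gal(\tk/\tk_v)$, whose fixed set in $\PP^1(\tk)$ is $\PP^1(\tk_v)$. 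Using surjectivity of the reduction map $\cO_{H_v} \onto \tk_v$, each class in $\PP^1(\tk_v)$ lifts to a direction $\vv_{\zeta_G,x}$ with $x \in \PP^1(H_v)$, and conversely every such direction is fixed. Applying $\gamma_*$ returns an $H_v$-rational direction at $Q$; since $Q$ itself lies in the tree spanned by $\PP^1(H_v)$, the direction $\vv$ indeed comes from that tree.

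The main obstacle is Step 3: pinning down that the $\Aut^c(K/H_v)$-fixed points in $\PP^1(\tk)$ are precisely $\PP^1(\tk_v)$, and that this identification is transported correctly through $\gamma_*$. This rests on standard Galois theory of local fields (the action on $\overline{\tk_v}$ factors through $\Gal(\overline{\tk_v}/\tk_v)$ with fixed field $\tk_v$), but the care lies in aligning the combinatorial description of $T_{\zeta_G}$ via residue classes with the Galois action inherited from the extension of $\sigma$ to $\PP^1_\Berk$; once this identification is in hand, the proof assembles immediately from the uniqueness of the descent direction.
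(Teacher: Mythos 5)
Your proposal is correct and follows essentially the same route as the paper: the paper's (very terse) proof likewise reduces to the facts that $\MinRes(\varphi)$ occupies a single, necessarily $\Aut^c(K/H_v)$-fixed direction at $Q$, and that the fixed directions in $T_Q \cong \PP^1(\overline{\FF_q})$ are exactly those parametrized by $\PP^1(\FF_q)$, i.e.\ the directions from the tree spanned by $\PP^1(H_v)$. Your version merely makes explicit the convexity/steepest-descent and Galois-equivariance steps that the paper leaves implicit.
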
 

\begin{proof} 
If $H_v$ has residue field $\FF_q$, 
then $T_Q$ is parametrized by $\PP^1(\overline{\FF_q})$ and the 
tangent directions at $Q$ fixed by $\Aut^c(K/H_v)$ 
correspond to the points of $\PP^1(\FF_q)$.  These are precisely the tangent directions 
at $Q$ coming from the tree spanned by $\PP^1(H_v)$.  (We remark that even if $H$ is not a local field,
the conclusion of the proposition will hold if the residue field of $K$ is separable 
over the residue field of $H$.)   
\end{proof}

If $H_v$ is a local field and $\MinRes(\varphi)$ contains no $H_v$-rational type II 
points,
there are exactly two $H_v$-rational type II points adjacent to it in the tree spanned by $\PP^1(H_v)$.  
The function $\ordRes_\varphi(\cdot)$
may take the same or different values at those points;
its value is strictly larger at all other $H_v$-rational type II points.  
Example 2.2 gives a case where the minimum is taken on at one of the two adjacent $H$-rational type II points,
and Example 2.4 gives a case where it is taken on at both points.  

\medskip

{\bf Bounds for the degree of an extension where $\varphi^{\gamma}$ has Minimal Resultant. \quad} 
It is useful to note that in Theorem \ref{MainThm}, the tree $\Gamma_{\Fix,\varphi^{-1}(\infty)}$
can be replaced by the tree $\Gamma_{\Fix,\varphi^{-1}(a)}$ spanned by the fixed points of $\varphi$ 
and the preimages of $a$, for any $a \in \PP^1(K)$:

\begin{proposition} \label{GeneralTreeProp} 
For each \,$a \in \PP^1(K)$, $\MinRes(\varphi)$ is contained in the tree $\Gamma_{\Fix,\varphi^{-1}(a)}$
spanned by the fixed points of $\varphi$ and the set of preimages $\{z \in \PP^1(K) : \varphi(z) = a\}$. 
\end{proposition}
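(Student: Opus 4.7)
\medskip

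The plan is to reduce immediately to the case $a = \infty$ already established in Theorem \ref{MainThm}, using the transitivity of $\GL_2(K)$ on $\PP^1(K)$ and the equivariance relation (\ref{Equicontinuity}).

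First I would choose any $\gamma \in \GL_2(K)$ with $\gamma(\infty) = a$; such a $\gamma$ exists because $\GL_2(K)$ acts transitively on $\PP^1(K)$. Applying Theorem \ref{MainThm} to the conjugate $\varphi^{\gamma}$ yields
\begin{equation*}
\MinRes(\varphi^{\gamma}) \ \subseteq \ \Gamma_{\Fix(\varphi^{\gamma}),\,(\varphi^{\gamma})^{-1}(\infty)}\ .
\end{equation*}

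Next I would transport this containment back to $\varphi$ via $\gamma$. By (\ref{Equicontinuity}), $\ordRes_{\varphi^{\gamma}}(Q) = \ordRes_{\varphi}(\gamma(Q))$ for every $Q \in \PP^1_{\Berk}$, so $Q \in \MinRes(\varphi^{\gamma})$ if and only if $\gamma(Q) \in \MinRes(\varphi)$; in particular $\gamma(\MinRes(\varphi^{\gamma})) = \MinRes(\varphi)$. A direct computation from the definition of $\varphi^{\gamma} = \gamma^{-1} \circ \varphi \circ \gamma$ shows that $\gamma(\Fix(\varphi^{\gamma})) = \Fix(\varphi)$ and $\gamma((\varphi^{\gamma})^{-1}(\infty)) = \varphi^{-1}(\gamma(\infty)) = \varphi^{-1}(a)$.

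Finally, since $\gamma$ takes paths to paths (Proposition \ref{GL_2Prop}(C)), it carries the tree spanned by any finite subset $S \subset \PP^1(K)$ to the tree spanned by $\gamma(S)$. Thus
\begin{equation*}
\gamma\bigl(\Gamma_{\Fix(\varphi^{\gamma}),\,(\varphi^{\gamma})^{-1}(\infty)}\bigr) \ = \ \Gamma_{\Fix(\varphi),\,\varphi^{-1}(a)}\ ,
\end{equation*}
and the desired inclusion $\MinRes(\varphi) \subseteq \Gamma_{\Fix,\varphi^{-1}(a)}$ follows by applying $\gamma$ to both sides of the inclusion at the top. There is no real obstacle here; the argument is purely formal once one has Theorem \ref{MainThm} and the equivariance (\ref{Equicontinuity}).
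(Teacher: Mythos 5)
Your proof is correct and follows essentially the same route as the paper: conjugate by a $\gamma$ with $\gamma(\infty) = a$, apply Theorem \ref{MainThm} to $\varphi^{\gamma}$, and transport the containment back using (\ref{Equicontinuity}) together with the equivariance of fixed points and preimages and the fact that $\gamma$ maps spanned trees to spanned trees. The only difference is that you spell out the path-preservation step via Proposition \ref{GL_2Prop}(C), which the paper leaves implicit.
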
 

\begin{proof} Take $a \in \PP^1(K)$, and choose $\gamma \in \GL_2(K)$ with $\gamma(\infty) = a$.  
It follows from (\ref{Equicontinuity}) that  
\begin{equation*} 
\MinRes(\varphi)  \ = \ \gamma(\MinRes(\varphi^{\gamma})) \ .
\end{equation*} 
By Theorem \ref{MainThm}, $\MinRes(\varphi^{\gamma})$ 
is contained in the tree $\Gamma_{\Fix,(\varphi^{\gamma})^{-1}(\infty)}$
spanned by the fixed points and poles of $\varphi^{\gamma}$,
so $\MinRes(\varphi)$
is contained in the tree $\gamma(\Gamma_{\Fix,(\varphi^{\gamma})^{-1}(\infty)})$.  
By equivariance, 
$Q$ is a fixed point of $\varphi^{\gamma}$ if and only if $\gamma(Q)$ is a fixed point of $\varphi$,
and $P$ is a pole of $\varphi^{\gamma}$ if and only if $\varphi(\gamma(P)) = a$.
Thus $\gamma(\Gamma_{\Fix,(\varphi^{\gamma})^{-1}(\infty)}) = \Gamma_{\Fix,\varphi^{-1}(a)}$.
\end{proof} 

Let $M = \min_{Q \in \PP^1_{\Berk}}\big(\ordRes_{\varphi}(Q)\big) 
= \min_{\gamma \in \GL_2(K)}\big(\ordRes(\varphi^{\gamma})\big)$.  
 
\begin{theorem} \label{DegreeBoundTheorem}
Let $H$ be a subfield of $K$, and suppose $\varphi(z) \in H(z)$ has degree $d \ge 2$.  
Then there is an extension $L/H$ in $K$ with $[L:H] \le (d+1)^2$ such that 
$\ordRes(\varphi^{\gamma}) = M$ for some $\gamma \in \GL_2(L)$. 
\end{theorem}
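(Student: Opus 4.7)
The plan is to exhibit a type II point $Q_* \in \MinRes(\varphi)$ of the form $\zeta_{D(P_0, r_*)}$, where $P_0$ is a fixed point or pole of $\varphi$ and $r_* \in |L^\times|$ for some extension $L/H(P_0)$ of degree at most $d+1$. Since fixed points of $\varphi$ are roots of $YF(X,Y) - XG(X,Y) \in H[X,Y]$ of degree $d+1$ and poles are roots of $G(X,1) \in H[z]$ of degree $d$, we will have $[H(P_0):H] \le d+1$, so that $[L:H] \le (d+1)^2$. By Proposition \ref{RationalityEquiv}, such a $Q_*$ equals $\gamma(\zeta_G)$ for some $\gamma \in \GL_2(L)$, and then $\ordRes(\varphi^\gamma) = \ordRes_\varphi(Q_*) = M$ by (\ref{ordRes_varphiDef}), completing the proof.

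First I would locate the center. Pick any type II point $Q_0 \in \MinRes(\varphi)$; by Theorem \ref{MainThm} this lies in the nonempty tree $\Gamma = \Gamma_{\Fix, \varphi^{-1}(\infty)}$ and corresponds to a bounded disc $D \subset K$. Since the leaves of $\Gamma$ are type I points (the fixed points and poles of $\varphi$) while $Q_0$ is type II, the point $Q_0$ is interior to $\Gamma$, so at least two tangent directions at $Q_0$ lie in $\Gamma$. Under the identification $T_{Q_0} \cong \PP^1(\tilde k)$, exactly one tangent direction at $Q_0$ corresponds to the exterior $\PP^1 \setminus D$ (the direction containing $\infty$), so at least one remaining branch of $\Gamma$ enters a sub-disc of $D$ and must terminate at some endpoint $P_0 \in D$ of $\Gamma$. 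Setting $H_0 = H(P_0)$, we may rewrite $D = D(P_0, r_0)$, so $Q_0$ lies on the path $[P_0, \infty] \subset \PP^1_\Berk$.

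Next I would apply Lemma \ref{PathConvexityLemma} to the path $[P_0, \infty]$, whose type II points are parameterized by $\gamma_A := \left(\begin{smallmatrix} A & P_0 \\ 0 & 1 \end{smallmatrix}\right)$ via $\gamma_A(\zeta_G) = \zeta_{D(P_0, |A|)}$, and whose field of definition together with $\varphi$ is $H(P_0, \infty) = H_0$. The lemma asserts that the breaks between affine pieces of $\ordRes_\varphi|_{[P_0, \infty]}$ occur at parameter values $t = \ord(A)$ in the divisible hull of $\ord(H_0^\times)$ with denominator $n \in \{1, \ldots, d+1\}$. The restriction $\ordRes_\varphi|_{[P_0, \infty]}$ is piecewise affine, convex upward, equals $+\infty$ at both endpoints, and attains its global minimum $M$ at $Q_0$; hence its minimum set is a nonempty connected subsegment of $[P_0, \infty] \cap \HH_\Berk$ whose endpoints are type II breaks. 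Let $Q_* = \zeta_{D(P_0, r_*)}$ be one such endpoint; then $\ord(r_*) = c/n$ for some $c \in \ord(H_0^\times)$ and $n \le d+1$. Picking $\pi \in H_0^\times$ with $\ord(\pi) = c$ and setting $L = H_0(\pi^{1/n})$ yields $[L:H] \le n(d+1) \le (d+1)^2$, while $A := \pi^{1/n} \in L^\times$ satisfies $|A| = r_*$, so $\gamma := \gamma_A \in \GL_2(L)$ realizes $\gamma(\zeta_G) = Q_*$, as required.

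The main subtlety is the tree-theoretic claim from the first step: the disc $D$ associated to any type II point $Q_0 \in \Gamma$ contains an endpoint of $\Gamma$. This rests on $Q_0$ being interior to $\Gamma$ (so it has at least two tangent directions in $\Gamma$) together with the fact that only one tangent direction at a type II point of $\PP^1_\Berk$ corresponds to the exterior of its associated disc; the remaining branches of $\Gamma$ at $Q_0$ are therefore forced into sub-discs of $D$ and must reach endpoints inside $D$.
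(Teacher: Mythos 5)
Your proposal is correct, and its overall skeleton matches the paper's: reduce to a path $[P,\infty]$ with $P$ a root of an explicit polynomial of degree at most $d+1$ over $H$, then invoke the rationality statement in Lemma~\ref{PathConvexityLemma} (the discussion after (\ref{IntRat})) to see that the relevant break point has radius of the form $|\alpha|^{1/e}$ with $\alpha \in H(P)^{\times}$ and $e \le d+1$, and finish with Proposition~\ref{RationalityEquiv}. Where you genuinely diverge is in how you arrange for a minimizing point to lie on such a path. The paper applies Proposition~\ref{GeneralTreeProp} with $a = \varphi(\infty)$: since $\infty$ is then itself a preimage of $a$, the tree $\Gamma_{\Fix,\varphi^{-1}(a)}$ is \emph{literally} the union of the paths $[F_i,\infty]$ and $[A_j,\infty]$, so no further argument is needed (and the finite preimages of $a$ have degree at most $d-1$ over $H$). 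You instead stay with $\Gamma_{\Fix,\varphi^{-1}(\infty)}$ and prove directly that the disc $D$ underlying a type II point $Q_0$ of this tree contains an endpoint $P_0$ of the tree: $Q_0$ is interior to $\Gamma$ (Lemma~\ref{FixPoleSet} guarantees $\Gamma$ is nondegenerate, and the endpoints of a tree spanned by type I points are exactly those points), at most one tangent direction at $Q_0$ in $\Gamma$ can be the direction of $\PP^1 \setminus D$, and any other branch must terminate at a spanning point inside $D$. That argument is sound. The trade-off: the paper's choice of $a$ buys a one-line reduction at the cost of introducing Proposition~\ref{GeneralTreeProp}; your version is self-contained relative to Theorem~\ref{MainThm} but requires the (correct) tree-combinatorial lemma you flag at the end. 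One cosmetic point: writing $\ord(r_*) = c/n$ for a radius $r_*$ is an abuse of notation — what you mean is that the path parameter $t_* = -\log_q r_*$ lies in $\frac{1}{n}\ord(H_0^{\times})$ — but the construction of $L = H_0(\pi^{1/n})$ is exactly as in the paper.
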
 

\begin{proof} It is enough to show there is an extension $L/H$ with $[L:H] \le (d+1)^2$ such that 
$\MinRes(\varphi)$ contains a type II point $Q$ rational over $L$.  

Put $a = \varphi(\infty) \in \PP^1(H)$.  Let $F_1, \ldots, F_{d+1}$ be the fixed points of $\varphi$,
and let $A_1, \ldots,A_d$ be the preimages of $a$ under $\varphi$, listed with multiplicity.  
Without loss we can assume that $A_1 = \infty$.  By Proposition \ref{GeneralTreeProp}, 
$\MinRes(\varphi)$ is contained in the tree $\Gamma_{\Fix,\varphi^{-1}(a)}$, 
which is the union of the paths $[F_i,\infty]$ and $[A_j,\infty]$ for $i = 1, \ldots, d+1$, $j = 2, \ldots, d$.
Let $Q$ be an endpoint of $\MinRes(\varphi)$, and let $P \in \{F_1, \ldots, F_{d+1}, A_2, \ldots, A_d\}$
be such that $Q \in [P,\infty]$.  Put $L_0 = H(P)$.  
We have $[H(F_i):H] \le d+1$ for each $i$, and $[H(A_j):H] \le d-1$ for each $j$, 
so $[L_0:H] \le d+1$.  Fix $\gamma \in \GL_2(L_0)$ with $\gamma(0) = P$ and $\gamma(\infty) = \infty$, 
and let $Q_0 = \gamma^{-1}(Q) \in [0,\infty]$.  By the discussion after formula (\ref{IntRat}),
there are an $\alpha \in L_0^{\times}$
and an integer $e$ with $1 \le e \le d+1$ such that $Q_0 = \zeta_{D(0,|\alpha|^{1/e})}$. 
Put $L = L_0(\alpha^{1/e})$.  Then $Q$ is rational over $L$, and $[L:H] \le (d+1)^2$.  
\end{proof}

\begin{corollary} \label{LogicCor} For each $d \ge 2$, 
there is a first order formula $\cF_d(f_0,\ldots, f_d, g_0, \ldots, g_d)$ in the language of valued fields
such that if $H$ is a Henselian nonarchimedean valued field, and if $\varphi(z) = (f_d a^d +  \cdots + f_0)/(g_d z^d + \cdots + g_0) \in H(z)$, 
then $\varphi$ has potential good reduction if and only if 
$H \models \cF_d(f_0, \cdots,f_d, g_0, \cdots, g_d)$.
\end{corollary}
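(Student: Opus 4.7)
The plan is to convert the quantitative bound of Theorem~\ref{DegreeBoundTheorem} into first-order data. Set $N = (d+1)^2$ and let $K$ be the completion of an algebraic closure of $H$. By Theorem~\ref{DegreeBoundTheorem}, $\varphi$ has potential good reduction over $K$ if and only if there exist a finite algebraic extension $L \subset K$ of $H$ with $[L:H] \le N$ and a $\gamma \in \GL_2(L)$ such that $\varphi^{\gamma}$ has good reduction over $L$; the converse direction is automatic, since any such $L$ embeds in $K$.

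For each $n \le N$, one parametrizes degree-$n$ extensions of $H$ first-order by quantifying over tuples $(p_0, \ldots, p_{n-1}) \in H^n$ producing an irreducible monic polynomial $p(T) = T^n + p_{n-1}T^{n-1} + \cdots + p_0$ --- irreducibility being a first-order condition via the nonexistence of a factorization into polynomials of smaller degree. Elements of $L = H[T]/(p(T))$ are then encoded by $n$-tuples in $H$ using the basis $\{1, T, \ldots, T^{n-1}\}$, and the $L$-ring operations become fixed polynomial operations on these tuples whose coefficients depend polynomially on the $p_i$. The Henselian hypothesis is what makes the valuation on $L$ both canonical and definable over $H$: the valuation extends uniquely, and for $\alpha \in L^{\times}$ one has $n \cdot \ord_L(\alpha) = \ord_H(N_{L/H}(\alpha))$. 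The norm is the determinant of multiplication by $\alpha$ in the power basis, hence a fixed polynomial term in the encoding of $\alpha$ and the $p_i$, so any $\ord_L$-comparison translates into an $\ord_H$-comparison between norms in $H$.

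With these encodings in place, the remaining task is to quantify over the four entries $A, B, C, D$ of $\gamma$ (each as an $n$-tuple in $H$), impose $AD - BC \ne 0$ in $L$, and express good reduction of $\varphi^{\gamma}$ as
\[
\ord_L(\Res(F^{\gamma}, G^{\gamma})) \ = \ 2d \cdot \min_{0 \le i \le d}\bigl(\ord_L(a_i),\ord_L(b_i)\bigr),
\]
where the $a_i, b_i$ are the coefficients of $F^{\gamma}, G^{\gamma}$ given by (\ref{FGg1}) and $\Res$ is the determinant (\ref{FGResultant}); these are all explicit polynomial expressions in $A, B, C, D, f_0, \ldots, f_d, g_0, \ldots, g_d$. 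The formula $\cF_d$ is then the disjunction over $n \in \{1, \ldots, N\}$ of the resulting existential sentences. The only mild subtlety --- easily handled by iterating the construction through a tower of primitive subextensions when $L/H$ fails to have a primitive element (a non-issue in characteristic zero) --- is ensuring that every finite extension of $H$ of degree $\le N$ can actually be reached by the parametrization; beyond this bookkeeping, the argument is a routine model-theoretic translation, with no essential obstacle beyond Theorem~\ref{DegreeBoundTheorem} itself.
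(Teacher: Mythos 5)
Your proposal is correct and follows essentially the same route as the paper: reduce to extensions of degree at most $(d+1)^2$ via Theorem~\ref{DegreeBoundTheorem}, parametrize such extensions first-order by irreducible monic polynomials, use Henselianness to define the extended valuation through $\ord_L(\alpha) = \frac{1}{n}\ord_H(N_{L/H}(\alpha))$ with the norm a universal polynomial in the encoding, and express good reduction of $\varphi^{\gamma}$ as the vanishing of $\ord_L(\Res(F^{\gamma},G^{\gamma})) - 2d\min(\ord_L(F^{\gamma}),\ord_L(G^{\gamma}))$. The only differences are cosmetic: you explicitly flag the primitive-element issue (which the paper silently elides by quantifying only over simple extensions $H(\beta)$), while the paper additionally conjoins the condition $\Res(F,G)\ne 0$ to guarantee $\deg(\varphi)=d$, a clause you should include as well.
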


\begin{proof}  If $H$ is Henselian (in particular, if $H$ is complete), then for each finite extension 
$H(\beta)/H$ there is a unique extension of the valuation $\ord(\cdot)$ on $H$ to a valuation on $H(\beta)$,
given by $\ord_{H(\beta)}(z) = (1/m) \ord(N_{H(\beta)/H}(z))$ for $z \in H(\beta)$, where  $[H(\beta):H] = m$.
If $z = a_0 + a_1 \beta + \cdots + a_{m-1} \beta^{m_1}$ with $a_0, \ldots, a_{m-1} \in H$, 
then $N_{H(\beta)/H}(z)$ is a universal polynomial in the $a_i$ 
and the coefficients of the minimal polynomial of $\beta$ over $H$.  

Write $(F,G)$ for the natural representation of $\varphi$. 
Let $\cF_{d,0}(f_0, \cdots,f_d, g_0, \cdots, g_d)$ be the formula  ``$\Res(F,G) \ne 0$'',
and for $m = 1, \ldots, (d+1)^2$ let $F_{d,m}(f_0, \cdots,f_d, g_0, \cdots, g_d)$ be the formula 
\begin{verse}
``\ There exist $a_1, \ldots, a_m \in H$ such that  $h_m(x) = x^m + a_1 x^{m-1} + \cdots + a_m$\\ 
           \qquad \qquad is irreducible over $H$, and there exist a root $\beta$ of $h_m(x)$  \\
 \ and $a, b, c, d \in H(\beta)$ with $ad-bc \ne 0$, such that for 
$\gamma = \left[ \begin{array}{cc} a & b \\ c & d \end{array} \right]$ \\
\ we have \ $\ord_{H(\beta)}(\Res(F^\gamma,G^\gamma)) 
- 2d \min(\ord_{H(\beta)}(F^\gamma),\ord_{H(\beta)}(G^\gamma)) = 0$.''
\end{verse}
We can take $\cF_d$ to be $\cF_{d,0} \wedge \big(\cF_{d,1} \vee \cdots \vee \cF_{d,(d+1)^2}\big)$.
\end{proof} 

\medskip
{\bf Failure to achieve the Minimal Resultant over the Field of Moduli.} 

Suppose $\varphi(z) \in H(z)$, where $H \subset K$.  Let $\cF_H(\varphi)$
be the set of fields $L$ with $H \subseteq L \subseteq K$ for which there is some $\gamma \in \GL_2(L)$
such that $\ordRes(\varphi^{\gamma})$ is minimal.
When $\MinRes(\varphi) = \{Q\}$ consists of a single point, $\cF_H(\varphi)$ is the 
set of fields $H \subseteq L \subseteq K$ such that there is some $\gamma \in \GL_2(L)$
with $\gamma(\zeta_G) = Q$.   
The {\em field of moduli} for the minimal resultant problem is 
\begin{equation*}
H_\varphi \ := \ \bigcap_{L \in \cF_H(\varphi)} L \ .
\end{equation*}
 
It is natural to ask if there is a $\gamma \in \GL_2(H_{\varphi})$ for which $\ordRes(\varphi^{\gamma})$ is minimal.
If $\MinRes(\varphi)$ contains an $H$-rational point, the answer is trivially yes.
If $\MinRes(\varphi)$ contains no $H$-rational points, the answer is generally no.  
In Example 2.1, take $d = p > 2$, with $\varphi(z) = \frac{z^p-p}{z^{p-1}}$ and $H = \QQ_p$.
We have $\MinRes(\varphi) = \{Q\}$ where $Q = \zeta_{D(0,p^{1/p})}$.  Here $Q$ is rational over $L$
if and only if the value group of $L$ contains $p^{1/p}$.  
In particular, $Q$ is rational over $L_1 = \QQ_p(\sqrt[p]{p})$ 
and over $L_2 = \QQ_p(\zeta_p\sqrt[p]{p})$ where $\zeta_p$ is any primitive $p^{th}$ root of unity.
Since $p > 2$, necessarily $L_1 \cap L_2 = \QQ_p$ (otherwise $L_1 = L_2$, since both extensions have degree $p$;
but then $\zeta_p \in L_1$, so $p-1 = [\QQ_p(\zeta_p):\QQ_p]$ divides $[L_1:\QQ_p] = p$).
Thus $H_{\varphi} = \QQ_p$.
However, $p^{1/p}$ is not in the value group of $\QQ_p^{\times}$, 
so by Proposition \ref{RationalityEquiv} there can be no $\gamma \in \GL_2(\QQ_p)$ with $\gamma(\zeta_G) = Q$.
Likewise, in Example 2.2, for $\varphi(z) = \frac{z^2-1}{2z}$ and $H = \QQ_2$, 
we have $\MinRes(\varphi) = \{Q\}$ where $Q = \zeta_{D(i,1/2)}$.  Here $D(i,1/2) = D(\sqrt{3},1/2)$  
since $|i - \sqrt{3}| = 1/2$, so $Q = \gamma_1(\zeta_G) = \gamma_2(\zeta_G)$ where
$\gamma_1 = \left[ \begin{array}{cc} 2 & i \\ 0 & 1 \end{array} \right]$
and $\gamma_2 = \left[ \begin{array}{cc} 2 & \sqrt{3} \\ 0 & 1 \end{array} \right]$.
Since $\QQ_2(i) \cap \QQ_2(\sqrt{3}) = \QQ_2$,  we have $H_{\varphi} = \QQ_2$.  
However,  $D(i,1/2) \cap \QQ_2$ is empty.  
Hence there can be no $\gamma \in \GL_2(\QQ_2)$ with $\gamma(\zeta_G) = Q$. 

\medskip
{\bf Answers to questions of Silverman concerning global Minimal Models.}

Throughout this subsection, $H$ will be a number field,
and $\varphi(z) \in H(z)$ will have degree $d \ge 2$.
Let $\cO_H$ be the ring of integers of $H$.
Given a nonarchimedean place $v$ of $H$, 
let $H_v$ be the completion of $H$ at $v$, $\cO_v$ the valuation ring of $H_v$, and $\pi_v$ 
a generator for the maximal ideal of $\cO_v$. Let $\CC_v$ be the completion of the algebraic closure of $H_v$.  
We will write $\ord_v(\cdot)$ for the valuation on $\CC_v$ normalized so that $\ord_v(\pi_v) = 1$, 
and $\ordRes_v(\varphi)$ and $\ordRes_{\varphi,v}(\cdot)$
for the functions previously denoted $\ordRes(\varphi)$ and $\ordRes_\varphi(\cdot)$.  
In this way the theory developed above is applicable for each nonarchimedean place $v$ of $H$.

A representation $(F,G)$ of $\varphi$ with $F(X,Y), G(X,Y) \in H[X,Y]$ is called a 
{\em representation of $\varphi$ over $H$};  such a pair
is unique up to scaling by an element of $H^{\times}$.  One can always arrange that 
$F,G \in \cO_H[X,Y]$;  in that case, the representation is called {\em integral}.  

In (\cite{Sil}, \S4.11), Silverman asks if (and when) it is possible to choose  
an ``optimal'' integral representation for $\varphi$, analogous to a minimal Weierstrass 
model for an elliptic curve.  For each prime $\fp = \fp_v$ of $\cO_H$, he defines an integer
\begin{equation*}
\varepsilon_\fp(\varphi) \ = \ \min_{\gamma \in \GL_2(H)} \ordRes_v(\varphi^{\gamma}) \ \ge \ 0 \ . 
\end{equation*} 
He then defines ``global minimal resultant'' of $\varphi$ to be the ideal  
\begin{equation*} 
\fR_\varphi \ = \ \prod_\fp \fp^{\varepsilon_\fp(\varphi)} \ .
\end{equation*} 
Here the product is finite since for a given representation $(F,G)$ of $\varphi$ over $H$,
for all but finitely many $\fp$ we have $\ord_\fp(\Res(F,G)) = 0$. 
  
Given a representation $(F,G)$ for $\varphi$
over $H$, in (\cite{Sil}, Proposition 4.99) Silverman shows there is a fractional ideal $\fa_{F,G}$ of $H$   
such that 
\begin{equation*}
\fR_\varphi \ = \ \left\{ \begin{array}{ll} 
                                 \fa_{F,G}^{2d} \cdot(\Res(F,G)) & \text{if $d$ is odd,} \\
                                 \fa_{F,G}^{d} \cdot (\Res(F,G)) & \text{if $d$ is even.} 
                          \end{array} \right.
\end{equation*}
Let $I(K)$ be the group of fractional ideals of $H$, and $P(K)$ the group of principal fractional ideals.
Silverman shows that if $d$ is odd, 
the ideal class $[\fa_\varphi] := [\fa_{F,G}] \in I(K)/P(K)$ 
is independent of the choice of $(F,G)$, while if $d$ is even, the refined ideal class  
$[\fa_\varphi] := [\fa_{F,G}] \in I(K)/\{(\alpha)^2 : (\alpha) \in P(K)\}$
is independent of the choice of $(F,G)$.
He calls $[\fa_\varphi]$ the {\em Weierstrass class of $\varphi$ over $H$}.

We will say that $\varphi$ has a {\em global minimal model over $H$} 
if for some $\gamma \in \GL_2(H)$, the function $\varphi^{\gamma}$ 
has an integral representation $(F^\gamma,G^\gamma)$ over $H$ such that  
\begin{equation*}
\ord_\fp\big(\Res(F^\gamma,G^\gamma)\big) 
\ = \ \varepsilon_\fp(\varphi) \quad \text{for each prime $\fp$ of $\cO_H$.} 
\end{equation*}
In (\cite{Sil}, Proposition 4.100), Silverman shows that if $\varphi$ has a global minimal model over $H$, 
then the Weierstrass class $\overline{\fa}_\varphi$ is trivial.
In (\cite{Sil}, Exercise 4.46) he asks

\begin{enumerate}

\item[(a)] When $H = \QQ$, does every $\varphi(z) \in \QQ(z)$ of degree $d \ge 2$ have a global 
minimal model over $\QQ$?

\item[(b)] When $H$ is an arbitrary number field and $\varphi(z) \in H(z)$ has degree $d \ge 2$, 
if $S$ is a finite set of primes of $\cO_H$
such that the localization $\cO_{H,S}$ is a Principal Ideal Domain, does $\varphi$ have a 
{\em global $S$-minimal model}?  In other words, is there a $\gamma \in \GL_2(H)$ 
such that $\varphi^\gamma$ has a representation $(F^\gamma,G^\gamma)$ 
with $F^\gamma(X,Y),G^\gamma(X,Y) \in \cO_{H,S}[X,Y]$, satisfying 
\begin{equation*}
\ord_\fp\big(\Res(F^\gamma,G^\gamma)\big) 
\ = \ \varepsilon_\fp(\varphi) \quad \text{for each prime $\fp \notin S$?} 
\end{equation*}

\item[(c)] When $H$ is an arbitrary number field and $\varphi(z) \in H(z)$ has degree $d \ge 2$, 
if the Weierstrass class $[\fa_\varphi]$ is trivial, does $\varphi$ have a global
minimal model over $H$? 

\end{enumerate}

As has already been noted by Bruin and Molnar (\cite{BM}), the answer to the first two questions is ``Yes''.
This follows from the Strong Approximation Theorem and the fact that the subgroup 
$\Aff_2(K) \subset \GL_2(K)$ acts transitively on the type II points in $\PP^1_\Berk$.
Indeed, in (b), let $\tS \supseteq S$ be a finite set of primes such that $\varphi$ has good reduction 
outside $\tS$.  For each prime $\fp = \fp_v \in \tS$, choose a $\gamma_\fp \in \GL_2(H)$ such that 
$\ordRes_v(\varphi^{\gamma_\fp}) = \varepsilon_\fp$ and put $\xi_\fp = \gamma_\fp(\zeta_G)$.  By 
Proposition \ref{RationalityEquiv}, $\xi_\fp \in \PP^1_{\Berk,v}$ is rational over $H$;  thus there 
exist $a_\fp, b_\fp \in H$ with $a_\fp \ne 0$, such that $\xi_\fp = \zeta_{D(b_{\fp},|a_\fp|_v)}$.  
Since $\cO_{H,S}$ is a PID there is an $a \in H$ such that $\ord_\fp((a)) = \ord_\fp((a_\fp))$ for each 
$\fp \in \tS$ and $\ord_\fp((a)) = 0$ for each $\fp \notin \tS$.  By the Strong Approximation Theorem
there is a $b \in H$ such that $\ord_\fp(b - b_\fp) > \ord(a_\fp)$ for each $\fp \in \tS$  
and $\ord_\fp(b) = 0$ for each $\fp \notin \tS$.  
Put $\gamma = \left[ \begin{array}{cc} a & b \\ 0 & 1 \end{array} \right]$;  then $\gamma(\zeta_G) = \xi_\fp$
for each $\fp \in \tS$ and $\gamma(\zeta_G) = \zeta_G$ for each $\fp \notin \tS$,
so $\ordRes_v(\varphi^\gamma) = \varepsilon_{\fp_v}$ for each prime $\fp_v$.  
Let $(F^{\gamma},G^\gamma)$ be a representation of $\varphi^\gamma$ over $H$;  since $\cO_{H,S}$
is a PID, we can assume $(F^{\gamma},G^\gamma)$ has been scaled so that 
$\min(\ord_{\fp}\big(F^\gamma),\ord_{\fp}(G^\gamma)\big) = 0$ for each $\fp \notin S$.   
Then $F^\gamma, G^\gamma$ are defined over $\cO_{H,S}$, 
and $\ord_\fp(\Res(F^{\gamma},G^\gamma)) = \varepsilon_\fp$ for each $\fp \notin S$, 
so $(F^{\gamma},G^\gamma)$ is a global $S$-minimal model.

\smallskip
The answer to question (c) is ``No'' in general.  The underlying reason for this is a disconnect
between the values of  $\ordRes_v(\cdot)$ and the points at which they are taken.
To obtain counterexamples, consider polynomials 
of the form $\varphi(z) = z^d + c$ with $d \ge 2$, $c \in H$.  For a given prime $\fp = \fp_v$ of $\cO_H$,
if $\ord_v(c) \ge 0$ then $\varphi(z)$ has good reduction at $\fp$.  Suppose $\ord_v(c) < 0$. 
Then $\ordRes_v(\varphi) = -2d\, \ord_v(c)$.  Computing $\ordRes_{\varphi,v}(\cdot)$ 
on the path $[0,\infty] \subset \PP^1_{\Berk,v}$, we find that for each $A \in \CC_v^{\times}$  
\begin{equation*}
\ordRes_{\varphi,v}\big(\zeta_{D(0,|A|_v)}\big) \ = \ 
\max\big((d-d^2) \ord_v(A), -2d \, \ord_v(c) + (d+d^2) \ord_v(A)\big) \ .
\end{equation*} 
This is minimal when $\ord_v(A) = (1/d) \ord_v(c)$.  If $(1/d) \ord_v(c)$ is not an integer, 
by convexity the least value of $\ordRes_{\varphi,v}(\cdot)$ on $H$-rational points in $\PP^1_{\Berk,v}$
occurs when $\ord_v(A)$ is one of the two integers adjacent to $(1/d) \ord_v(c)$.

\smallskip
For a counterexample when $d$ is odd, 
take $\varphi(z) = z^5 + 1/(1+ 4 \sqrt{-5})$, so $d = 5$ and $c = 1/(1+ 4 \sqrt{-5})$, with $H = \QQ(\sqrt{-5})$. 
The field $H$ has class number $2$.  
The ideal $\fp = \fp_v = (3,1+\sqrt{-5})$ in $\cO_H$ is one of the primes containing $(3)$;
it is not principal, but $\fp^4 = (1 + 4 \sqrt{-5})$, so $\ord_v(c) = -4$.  

Clearly $\varphi(z)$ has good reduction at all primes other than $\fp$. 
The least value of $\ordRes_{\varphi,v}(\cdot)$ on $H$-rational points 
occurs only when $\ord_v(A) = -1$, and one has 
\begin{equation*}
20 \ = \ \ordRes_{\varphi,v}\big(\zeta_{D(0,|A|_v)}\big) \ < \ \ordRes_v(\varphi) \ = \ 40 \ .
\end{equation*}
The integral representation $(F,G)$ with $F(X,Y) = X^5/c + Y^5$, $G(X,Y) = Y^5/c$ satisfies 
$(\Res(F,G)) = \fp^{40}$, while $\fR_\varphi = \fp^{20}$.  Since
$\fR_\varphi = \fa_{F,G}^{10} \cdot (\Res(F,G))$, 
it follows that $\fa_{F,G} = \fp^{-2} = (1/(2-\sqrt{-5}))$.  
Thus the class $[\fa_\varphi]$ is trivial.  However, there is no $\gamma \in \GL_2(H)$
for which $\ordRes_v(\varphi^\gamma) = \fR_\varphi$.  If there were, in $\PP^1_{\Berk,v}$ we would have 
$\gamma(\zeta_G) = \zeta_{D(0,3)}$, while for each finite place $w \ne v$, 
in $\PP^1_{\Berk,w}$ we would have $\gamma(\zeta_G) = \zeta_G$.  
By the proof of Proposition \ref{RationalityEquiv}, this would mean that  
$\ord_v(\det(\gamma)) = -1$ and $\ord_w(\det(\gamma)) = 0$ for all $w \ne v$,  
so $(\det(\gamma)) = \fp^{-1}$.  This is a contradiction since $\fp^{-1}$ is not principal.

\smallskip    
For a counterexample when $d$ is even, take $\varphi(z) = z^4 + 1/(19 + 4 \sqrt{-23})$, so $d = 4$ and 
$c = 1/(19 + 4 \sqrt{-23})$, with  $H = \QQ(\sqrt{-23})$. 
The field $H$ has class number $3$.
The ideal $\fp = \fp_v = (3,(1 + \sqrt{-23})/2)$ in $\cO_H$ is one of the primes containing $(3)$; 
it is  not principal, but $\fp^3 = (2 - \sqrt{-23})$ and $\fp^6 = (19 + 4 \sqrt{-23})$, so $\ord_v(c) = -6$.  

Clearly $\varphi(z)$ has good reduction at all primes other than $\fp$. 
The least value of $\ordRes_{\varphi,v}(\cdot)$ on $H$-rational points 
occurs only when $\ord_v(A) = -2$, and one has 
\begin{equation*}
24 \ = \ \ordRes_{\varphi,v}\big(\zeta_{D(0,|A|_v)}\big) \ < \ \ordRes_v(\varphi) \ = \ 48 \ .
\end{equation*}
The normalized representation $(F,G)$ with $F(X,Y) = X^4/c + Y^4$, $G(X,Y) = Y^4/c$ satisfies 
$(\Res(F,G)) = \fp^{48}$, while $\fR_\varphi = \fp^{24}$.  Since
$\fR_\varphi = \fa_{F,G}^{4} \cdot (\Res(F,G))$, 
it follows that $\fa_{F,G} = \fp^{-6} = (1/(2 - \sqrt{-23}))^2$.  
Thus the class $[\fa_\varphi]$ is trivial.  However, there is no $\gamma \in \GL_2(H)$
for which $\ordRes_v(\varphi^\gamma) = \fR_\varphi$.  If there were,  
we would have  $\ord_v(\det(\gamma)) = -2$ and $\ord_w(\det(\gamma)) = 0$ for all $w \ne v$,
so $(\det(\gamma)) = \fp^{-2}$.  This is impossible since $\fp^{-2}$ is not principal.    

\medskip
{\bf What is the dynamical significance of the Minimal Resultant Locus?}

\nopagebreak
When $\varphi$ has potential good reduction, the Minimal Resultant Locus consists of the unique
repelling fixed point of $\varphi$ in $\HH_{\Berk}$.  It is natural to ask about the dynamical significance of the 
Minimal Resultant Locus when $\varphi$ does not have potential good reduction.  

We do not know the answer to this.  The examples in \S2 show it does not always consist of fixed points. 
Rob Benedetto has remarked that another set which arises naturally in arithmetic dynamics, 
and is either a point or a segment, is the {\em Barycenter} of $\varphi$, defined to be the set of
points $Q \in \PP^1_\Berk$  which minimize the Arakelov-Green's function $g_\varphi(Q,Q)$ 
(see \cite{B-R}, \S10.2), and can be computed as the set of points $Q \in \HH_{\Berk}$ 
such that each component of $\PP^1_\Berk \backslash \{Q\}$
has mass at most $1/2$ for the invariant measure $\mu_{\varphi}$.  
(The author thanks Benedetto for pointing this out.)
In Example 2.3, when $p = 2$ the Barycenter is the segment $[\zeta_{D(0,1/2)},\zeta_{D(1,1/2)}]$ 
while the Minimal Resultant Locus is $\{\zeta_G\}$.  In Example 2.6, the Barycenter is $\{\zeta_{D(0,p)}\}$
while the Minimal Resultant Locus is $\{\zeta_{D(0,p^{4/3})}\}$.  Thus there is no clear relationship
between the Minimal Resultant Locus and the Barycenter.

Some other questions about the Minimal Resultant Locus, which may shed light on the 
general question of its dynamical meaning, are as follows:  

\begin{enumerate}
\item  How are the Minimal Resultant Loci of the iterates 
$\varphi, \varphi^{(2)}, \varphi^{(3)}, \cdots$ related?  
There are examples where the Minimal Resultant Loci of all the iterates are the same.  Does this happen 
in general?  If not, do they stabilize for large $n$, or converge to something with geometric significance?
Where do they lie relative to the Berkovich Julia set of $\varphi$?  

\smallskip
\item  Can one give a geometric description of the Minimal Resultant Locus?
This appears necessary in order to address stability questions of the kind above.

By Proposition \ref{GeneralTreeProp}, $\MinRes(\varphi)$ is contained in 
the intersection of the trees $\Gamma_{\Fix,\varphi^{-1}(a)}$ for all $a \in \PP^1(K)$. 
Recall that a {\em repelling fixed point} of $\varphi$ in $\HH_{\Berk}$
is a point $x \in \HH_{\Berk}$ such that $\varphi(x)= x$ 
and the degree of the reduction of $\varphi$ at $x$ is at least $2$
(see \cite{B-R}, p.340). 
In \cite{RR-GMR} the author shows

\begin{theorem} \label{ThmRSR2} The intersection of the trees 
$\Gamma_{\Fix,\varphi^{-1}(a)}$ for all $a \in \PP^1(K)$ 
is the tree $\Gamma_{\Fix,\Repel}$ spanned by the fixed points of $\varphi$ in $\PP^1(K)$ 
and the repelling fixed points of $\varphi$ in $\HH_{\Berk}$.  
\end{theorem}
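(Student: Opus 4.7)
I would prove both inclusions of the claimed equality, writing $\Gamma^\ast := \bigcap_{a \in \PP^1(K)} \Gamma_{\Fix,\varphi^{-1}(a)}$. The fixed points of $\varphi$ in $\PP^1(K)$ belong trivially to both sides.

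For $\Gamma_{\Fix,\Repel} \subseteq \Gamma^\ast$, I reduce to showing every repelling fixed point $x \in \HH_\Berk$ lies in every $\Gamma_{\Fix,\varphi^{-1}(a)}$, and argue by contradiction. If $x \notin \Gamma_{\Fix,\varphi^{-1}(a)}$ for some $a$, then the set $\Fix(\varphi) \cup \varphi^{-1}(a)$ is contained in a single component $U_{\vw}$ of $\PP^1_\Berk \setminus \{x\}$. Analyzing the induced map $\tilde{\varphi}_x$ on $T_x \cong \PP^1(\tk)$: the fixed points of $\varphi$ in direction $\vw$ force $\tilde{\varphi}_x(\vw) = \vw$, and the confinement of all preimages of $a$ to $U_{\vw}$ forces $\tilde{\varphi}_x^{-1}(\vw) = \{\vw\}$ with full multiplicity $d' = \deg_x(\varphi) \ge 2$. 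Total ramification at $\vw$ then gives $\varphi^{-1}(U_{\vw}) = U_{\vw}$, so $\overline{U_{\vw}}$ is completely invariant. Conjugating by $\gamma \in \GL_2(K)$ with $\gamma(\zeta_G) = x$ and $\gamma(\vv_\infty) = \vw$, the map $\varphi^\gamma$ preserves $\zeta_G$ and has the Berkovich complement of the closed unit disc completely invariant; since $\infty$ is fixed with all preimages of $\infty$ confined to that complement, the denominator of a normalized representation of $\varphi^\gamma$ must be constant, so $\varphi^\gamma$ is a polynomial $P$ of degree $d$. Newton polygon analysis of $P(z) - z$ then shows that the reduction $\tilde{P}(z) - z$ has degree $d' \ge 2$ in $\tk[z]$, so exactly $d'$ of the $d$ finite fixed points of $P$ lie in the closed unit disc $\{|z| \le 1\}$. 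This contradicts the hypothesis that all $d+1$ fixed points of $\varphi^\gamma$ in $\PP^1(K)$ lie in $\{|z| > 1\} \cup \{\infty\}$.

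For $\Gamma^\ast \subseteq \Gamma_{\Fix,\Repel}$, I argue by contraposition. Given $x \notin \Gamma_{\Fix,\Repel}$, let $\vw$ be the direction at $x$ containing $\Gamma_{\Fix,\Repel}$, so every fixed point of $\varphi$ in $\PP^1(K)$ and every repelling fixed point in $\HH_\Berk$ lies in $U_{\vw}$. Setting $V := \PP^1_\Berk \setminus \overline{U_{\vw}}$, I seek $a \in U_{\vw} \cap \PP^1(K)$ with $a \notin \varphi(V \cup \{x\})$; for then $\varphi^{-1}(a) \subseteq U_{\vw}$ and $x \notin \Gamma_{\Fix,\varphi^{-1}(a)}$. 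If instead $\varphi(V \cup \{x\}) \supseteq U_{\vw}$, a polynomial-like argument symmetric to the one above, with $U_{\vw}$ and a component of $V$ interchanged, would force either a fixed point of $\varphi$ in $\PP^1(K) \cap V$ or a repelling fixed point in $\HH_\Berk \cap V$, both contradicting the absence of such points from $V$. Rivera-Letelier's classification of fixed components in $\HH_\Berk$ as attracting, indifferent, or repelling is used to rule out the non-repelling alternatives; then $\varphi(V \cup \{x\}) \cap U_{\vw}$ must omit a nonempty open subset of $U_{\vw}$, from which the required $a$ is drawn.

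The principal obstacle is the second inclusion. Where the first inclusion's polynomial-like argument follows directly from the total-ramification hypothesis, the reverse direction requires proving that complete invariance fails in $V$ via a dynamical classification rather than a combinatorial count. Moreover, when $x$ is of type II the complement $V$ may decompose into many components, each requiring individual analysis of the local map $\tilde{\varphi}_x$; and when $x$ is of type III or IV the tangent space $T_x$ is degenerate, so one reduces to the type II case by approximating $x$ with type II points and invoking the continuity and piecewise-affine structure of $\ordRes_\varphi(\cdot)$ from Proposition \ref{ExtensionCor}.
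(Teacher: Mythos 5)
First, a point of reference: this paper does not prove Theorem \ref{ThmRSR2} — it is quoted from the companion paper \cite{RR-GMR}, listed as ``in preparation'' — so there is no in-paper argument to measure yours against. Judged on its own terms, your proposal has fixable errors in the first inclusion and a genuine gap in the second.

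For $\Gamma_{\Fix,\Repel}\subseteq\Gamma^\ast$ your strategy (localize at the type II repelling fixed point $x$ and extract a counting contradiction from the reduction $\tvarphi_x$) is sound, but several steps do not hold as written. Confining $\varphi^{-1}(a)$ to $U_{\vw}$ gives $\tvarphi_x^{-1}(\vv_a)=\{\vw\}$ for the direction $\vv_a$ toward $a$, not $\tvarphi_x^{-1}(\vw)=\{\vw\}$; to identify $\vv_a$ with $\vw$ you must first use the fixed-point index formula to show that all $d_x+1$ fixed points of $\tvarphi_x$ concentrate at $\vw$, whence $\tvarphi_x(\vw)=\vw$. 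The claim that ``the denominator of a normalized representation of $\varphi^\gamma$ must be constant, so $\varphi^\gamma$ is a polynomial of degree $d$'' is false: putting all poles in $\{|z|>1\}\cup\{\infty\}$ makes only the \emph{reduction} a polynomial, and your Newton-polygon count must be run on $YF^\gamma-XG^\gamma$ with the surplus multiplicities tracked, not on ``$P(z)-z$.'' Note also that a degree-$d_x\ge 2$ rational map over $\tk$ \emph{can} have all $d_x+1$ fixed points at a single point (e.g.\ $w\mapsto w+1/w$), so the fixed-point hypothesis alone yields no contradiction; the contradiction only appears when ``all fixed points of $\tvarphi_x$ at $\vw$'' (forcing local degree $1$ there) is played against ``$\vw$ totally ramified'' (forcing local degree $d_x\ge 2$). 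Your sketch never isolates this tension.

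The inclusion $\Gamma^\ast\subseteq\Gamma_{\Fix,\Repel}$ is the real content, and your proposal does not supply it. For $x\notin\Gamma_{\Fix,\Repel}$ you must exhibit an $a$ with $\varphi^{-1}(a)\subseteq U_{\vw}$, which amounts to proving that the absence of classical \emph{and} repelling fixed points from $V=\bigcup_{\vv\ne\vw}U_{\vv}$ forces (i) all surplus multiplicity to sit in the direction $\vw$, and (ii) a residue class in $T_{\varphi(x)}$ whose only preimage under the induced map is $\vw$. Your text replaces this with ``a polynomial-like argument symmetric to the one above \dots would force'' the conclusion — an announcement of the required lemma, not a proof. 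It is also unclear how you would apply ``Rivera-Letelier's classification of fixed components'' at a point $x$ that need not be fixed by $\varphi$: when $\varphi(x)\ne x$ there is no self-map of $T_x$ to classify, and the fixed-point bookkeeping in $V$ takes a different form. Finally, reducing type III/IV points to type II ``by invoking the continuity and piecewise-affine structure of $\ordRes_\varphi(\cdot)$'' is a non sequitur — membership in the trees $\Gamma_{\Fix,\varphi^{-1}(a)}$ and $\Gamma_{\Fix,\Repel}$ has nothing to do with $\ordRes_\varphi$; what one actually uses is that these trees are closed and that a type III or IV point has at most two tangent directions.
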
 
Where does the Minimal Resultant Locus lie in this tree?  Does it consist of points subject to some balance 
condition, like the one describing the Barycenter?  Is it possible to prune the tree still further?  
It seems plausible that the Minimal Resultant Locus might belong to the subtree spanned by 
the attracting and repelling fixed points of $\varphi$. 
%
%

%

\smallskip
\item What is the arithmetic significance of the value  of $\ordRes_{\varphi}(\cdot)$ 
on $\MinRes(\varphi)$?  It is clearly a conjugacy invariant which measures the 
complexity of $\varphi$.
\end{enumerate} 

\section{Algorithms} \label{AlgorithmsSection}

In this section we give two algorithms:  one which computes the Minimal Resultant Locus of $\varphi$,
and another which finds the $H$-rational points where $\ordRes_{\varphi}(\cdot)$ is minimal, 
in the case when $H$ is a local field and $\varphi$ is rational over $H$.

\medskip
Given $\varphi(z) \in K(z)$ with $d = \deg(\varphi) \ge 2$, put $a = \varphi(\infty) \in K \cup \{\infty\}$.  
The following algorithm finds the minimal value of $\ordRes_\varphi(\cdot)$ and determines $\MinRes(\varphi)$ 
by working in the tree $\Gamma_{\Fix,\varphi^{-1}(a)}$.  This tree is well suited for computations, 
because it is spanned by $\infty$ and the finite fixed points and finite solutions to $\varphi(z) = a$.  
This means the necessary changes of coordinates can be done with conjugacies by affine translations.  

\medskip
{\bf Algorithm A:  Minimize $\ordRes_\varphi(\cdot)$, find $\MinRes(\varphi)$, 

\qquad \qquad and find a $\gamma \in \GL_2(K)$ for which $\ordRes(\varphi^\gamma)$ is minimal.} 

\smallskip
Given a complete nonarchimedean valued field $K$ with absolute value $|x| = q^{-\ord(x)}$, 

\quad and a function $\varphi(z) \in K(z)$ with $d = \deg(\varphi) \ge 2$:
  
\begin{enumerate}
\item[(1)] [Find the endpoints of $\Gamma_{\Fix,\varphi^{-1}(a)}$.]

    \begin{enumerate}
          \item Write $\varphi(z) = f(z)/g(z)$ with $f(z), g(z) \in K[z]$ and put $a = \varphi(\infty)$.
          
          \item Find the roots of $f(z) - z g(z) = 0$ (the finite fixed points). 
                
          \item  If $a = \infty$, find the roots of $g(z) = 0$ (the finite poles). 
                 If $a \ne \infty$, 

                 find the roots of $f(z) - a \cdot g(z) =  0$ 
                 (the finite solutions to $\varphi(z) = a$).
         
          \item List the distinct roots from (b) and (c) as $\{\alpha_1, \ldots, \alpha_k\}$. 
    
    \end{enumerate} 

\item[(2)] [Minimize $\ordRes_\varphi(\cdot)$ on each path $[\alpha_i,\infty]$.]  

            For each $i = 1, \ldots, k$, do the following: 

    \begin{enumerate}
          \item Put $\gamma_i(z) = z + \alpha_i$. 
          \item Find a normalized representation $(F_i,G_i)$ 
                for $\varphi^{\gamma_i}(z) = \varphi(z + \alpha_i) - \alpha_i$.
          \item Compute $R_i = \ord\big(\Res(F_i,G_i)\big)$.
          \item Writing $F_i(X,Y)= a_d X^d + \cdots + a_0 Y^d$, $G_i(X,Y) = b_d X^d + \cdots + b_0 Y^d$,
          
               put $C_\ell = R_i - 2d \, \ord(a_\ell)$, $D_\ell = R_i - 2d \, \ord(b_\ell)$ 
               for $\ell = 0, \ldots, d$.
          
          \item Minimize the piecewise affine function 

                $\displaystyle{ \chi_i(t) = \max\big( \max_{0 \le \ell \le d}( C_\ell + (d^2+d - 2d \ell)t), 
                         \max_{0 \le \ell \le d} (D_\ell + (d^2+d - 2d (\ell+1)t) \big) }$.
                         
          \item Record the minimum value of $\chi_i(t)$ as $M_i$, and record the set of points where 
                it is achieved as a singleton $\{\zeta_{D(\alpha_i,r_i)}\}$ 
                or a segment $[\zeta_{D(\alpha_i,r_{i,1})}, \zeta_{D(\alpha_i,r_{i,2})}]$,
                where  $r = q^{-t}$ for a given $t$.

    \end{enumerate} 

\item[(3)] [Find the Minimum.]  Let $M = \min_{1 \le i \le k} M_i$,
                       output  ``$\min\big(\ordRes_\varphi(\cdot)\big) = M$''.
        
\item[(4)] [Find the Minimal Resultant Locus.]  Consider the indices $i$ with $M = M_i$\ : 

    \begin{enumerate}
    
           \item If for each such $i$, $\chi_i(t)$ achieved $M$ at a single point,     
           
                \qquad output ``$\MinRes(\varphi) = \{\zeta_{D(\alpha_i,r_i)}\}$'' for any such $i$, and go to (5).
                
           \item If for some such $i$, $\chi_i(t)$ achieved $M$ on a segment, 
           
                   \begin{enumerate}
                   
                     \item Find the relevant nodes of the tree $\Gamma_{\Fix,\varphi^{-1}(a)}$:  
 
                             \quad for all $(i,j)$ with $1 \le i < j \le k$ such that $M = M_i = M_j$, 

                             \qquad find $r_{ij} = |\,\alpha_i - \alpha_j|$, 
						then record $\zeta_{D(\alpha_i,r_{ij})}=\zeta_{D(\alpha_j,r_{ij})}$ as a node.
                           
                       \item Using the nodes, collate the segments 
                       $[\zeta_{D(\alpha_i,r_{i,1})}, \zeta_{D(\alpha_i,r_{i,2})}]$ 
                       
                      \quad into a single                 
                                  segment $[\zeta_{D(a,r_a)}, \zeta_{D(b,r_b)}]$, 
                       
                      \quad and output ``$\MinRes(\varphi) = [\zeta_{D(a,r_a)}, \zeta_{D(b,r_b)}]$''.

                   \end{enumerate} 
       
    \end{enumerate} 

\item[(5)]  [Find  $\gamma \in \GL_2(K)$ with $\ordRes(\varphi^{\gamma}) = M$.]  
 
    \begin{enumerate}
       \item Choose an endpoint of $\MinRes(\varphi)$  

             \qquad and write it as $\zeta_{D(B,|A|)}$  with $A \in K^{\times}$, $B \in K$.  

       \item Output ``$\gamma = \left[ \begin{array}{cc} A & B \\ 0 & 1 \end{array} \right]$'', then halt.
     \end{enumerate} 
     
\end{enumerate}

The correctness of Algorithm A follows from Theorem \ref{MainThm} and Proposition \ref{GeneralTreeProp}.

\medskip
When $\varphi(z) \in \QQ(z)$ and $|x| = |x|_p$ for a rational prime $p$, 
Algorithm A could be implemented 
either using arithmetic over global fields or over local fields.  
Working over global fields, one has the advantage 
of exact results, but care is needed to avoid coefficient explosion in intermediate steps. 
Over local fields, the implementation is more transparent and coefficient explosion does not occur, 
but careful error estimates are needed to assure that the results are correct.  
Below we sketch a possible implementation   
using arithmetic in global fields.  An implementation using local fields could be given  
using Theorem \ref{StabilityThm} and the factoring algorithm of Cantor and Gordon (\cite{CG}), 
which runs in probabilistic polynomial time and provides explicit error estimates for the precision needed.   
See also the factoring algorithms of Pauli (\cite{Pauli1}, \cite{Pauli2}) and the references therein.  

\smallskip
  
 Take $K = \CC_p$,  
and normalize the valuation $\ord(\cdot)$ on $\CC_p$ so it extends the valuation $\ord_p(\cdot)$  
on $\QQ$.  Let $\alpha_1, \ldots, \alpha_k$ be the roots from Step (1), 
and put $L = \QQ(\alpha_1, \ldots, \alpha_k)$.  Up to the action of $\Aut^c(\CC_p/\QQ_p)$, 
embeddings of $L$ in $\CC_p$ correspond to primes of $\cO_L$ above $p$.  Since $L/\QQ$ is galois,
it suffices to find one of those primes $\fp$, 
and work with the corresponding valuation $\ord_\fp(\cdot)$ on $L$.  
However, implementing Algorithm A does not require computing the full ring of integers $\cO_L$:   
it is enough to find a $p$-maximal order $\cO_{L,p} \subset \cO_L$ and a maximal ideal 
of that order lying over $(p)$.  It is beneficial to localize at $p$, 
and work over $\ZZ_{(p)}$ rather than $\ZZ$:  the localization $\cO_{L,(p)}$ of $\cO_{L,p}$,  
which is the integral closure of $\ZZ_{(p)}$ in $L$, is a PID.  
Finally, the computations  for Algorithm A need not be done in  $L$:  
they can be carried out in the subfields $L_i = \QQ(\alpha_i)$ and $L_{ij} = \QQ(\alpha_i,\alpha_j)$,
working with the restriction of $\ord_\fp(\cdot)$ to those fields.      

Since $\alpha_1, \ldots, \alpha_k$, $L$, and $\fp$ are not known in advance,    
one can proceed as follows.  Put $P(x) = f(x) - x g(x)$, and put $Q(x) = g(x)$ or $Q(x) = f(x) - a \, g(x)$ 
according as $a = \varphi(\infty)$ is infinite or finite. 
Let $f_1(x), \ldots, f_r(x)$ be the distinct monic irreducible factors of $P(x)$ and $Q(x)$,   
so $\alpha_1, \ldots, \alpha_k$ are the roots of $f_1(x), \ldots, f_r(x)$.  
For each $j = 1, \ldots, r$, put $\tL_j = \QQ[x]/(f_j(x))$ and let $\talpha_j$ be the image of $x$ in $\tL_j$.  
Find the maximal ideals $\widetilde{\fp}_{j\ell}$ of $\cO_{\tL_j,(p)}$ 
and the corresponding valuations $\ord_{\widetilde{\fp}_{i\ell}}(\cdot)$.  Carry out Step (2) 
of Algorithm A for each pair $(\talpha_j, \ord_{\widetilde{\fp}_{i\ell}}(\cdot))$.  
Up to conjugacy, this is equivalent to carrying out Step (2) for the roots $\alpha_i$ of $f_j(x)$ 
and the valuation $\ord_{\fp}(\cdot)$.  

The minimization of $\chi_i(t)$ in Step (2e) 
can be done crudely in $O(d^3)$ steps by computing the intersection points of each pair of affine functions, 
and comparing the values of the functions at those points.  It could be done more efficiently by first 
finding highest of the functions with given slope $m \equiv d^2 + d \pmod{2d}$, 
then solving for the intersection point of the 
functions with slopes $d^2 + d$ and $-d^2 - d$ and comparing function values at that point, and continuing
on with a binary search. 

If in Step (4a) the Minimal Resultant Locus turns out to be a single point 
(in particular if $d$ is even) the algorithm terminates. 
However, if  the Minimal Resultant Locus is a segment, it must either have the form 
$[\zeta_{D(\alpha_i,r_{i,1})}, \zeta_{D(\alpha_i,r_{i,2})}]$ for some $i$, 
or $[\zeta_{D(\alpha_i,r_{i,1})}, \zeta_{D(\alpha_i,r_{i,2})}] 
\cup [\zeta_{D(\alpha_j,r_{j,1})}, \zeta_{D(\alpha_j,r_{j,2})}]$ for some $i$ and $j$, where $r_{i,2} = r_{j,2}$
and the segments are disjoint except for their upper endpoint.  To carry out Step (4b) one should 
first find the segments $[\zeta_{D(\alpha_i,r_{i,1})}, \zeta_{D(\alpha_i,r_{i,2})}]$ for which $r_{i,2}$ 
is maximal, and among those, choose one for which $r_{i,1}$ is minimal.
The corresponding segment $[\zeta_{D(\alpha_i,r_{i,1})}, \zeta_{D(\alpha_i,r_{i,2})}]$ will either be the entire 
Minimal Resultant Locus, or one leg of it.  Suppose this segment came 
from the pair $(\talpha_1, \ord_{\widetilde{\fp}_{1,1}}(\cdot))$ and the field $\tL_1$.  
One should then factor $f_1(x), \ldots, f_r(x)$ over $\tL_1[x]$, and for each irreducible
factor $f_{i,h}(x)$ (except the linear factor $x - \talpha_1$ of $f_1(x)$) 
one should form the field $\tL_{1,i,h} = \tL_1[x]/(f_{i,h}(x))$, find the maximal ideals of 
$\cO_{\tL_{1,i,h},(p)}$ lying over $\widetilde{\fp}_{1,1}$ and carry out Step (2) again 
for these fields and valuations.  One can then determine the relevant nodes of $\Gamma_{\Fix,\varphi^{-1}(a)}$, 
and complete Step (4b) by using them to decide whether the Minimal Resultant Locus has one leg or two.

The author has not carried out 
a detailed running time analysis of this procedure 
(which would be lengthy, and tangential to the purposes of the paper), but 
using the standard number-theoretic algorithms below it is evident that it 
could be implemented to run in probabilistic polynomial time.

Lenstra, Lenstra and Lovasz (\cite{LLL}) showed that a polynomial $h(z) = a_0 + a_1 z + \cdots + a_n z^n \in \QQ[z]$
can be deterministically factored over $\QQ$ in $O(n^{12} + n^9 \log|h|)$ bit operations, where 
$|h| = (\sum_i |a_i|^2)^{1/2}$.  A.K. Lenstra (\cite{AKL}) proved an analogous result for     
polynomials over a number field.  A result of Mignotte (\cite{Mig},  see for example Cohen \cite{Coh}, \S3.5.1)
assures that the lengths of the  coefficients of the factors are polynomially bounded in terms of the input.  
If $F = \QQ(\beta)$ is a number field, where $\beta$ is an algebraic integer, 
standard methods for finding $\cO_F$ such as Zassenhaus's Round Two algorithm 
(see \cite{Coh}, \S6.1) involve factoring the discriminant of the minimal polynomial of $\beta$, 
and then using linear algebra to successively enlarge the order $\ZZ[\beta]$ 
to be $q$-maximal at each prime $q$ dividing the discriminant.  
There is no known polynomial time algorithm for factoring integers,  
but Zassenhaus's algorithm can compute a $p$-maximal order $\cO_{F,p} \subset \cO_F$  
without factoring the discriminant.  Since the discriminant is known,
using (\cite{Coh}, Algorithm 2.4.6) the linear algebra computations can be done without coefficient explosion.
A $\ZZ$-basis for $\cO_{F,p}$ gives an integral basis for $\cO_{F,(p)}$ over $\ZZ_{(p)}$;  
thus the algorithm of Buchmann-Lenstra (see \cite{Coh}, \S6.2) 
can be used to find the maximal ideals $\fp$ of $\cO_{F,(p)}$ above $(p)$.
This involves carrying out a series of matrix computations over $\FF_p$.  
Given $0 \ne x \in \cO_F$, the standard way to compute $\ord_{\fp}(x)$ 
is to find an element $\beta \in \fp^{-1} \backslash \cO_F$, and then determine the largest 
integer $N$ such that $\beta^N x \in \cO_F$ (see Cohen \cite{Coh}, \S4.8.3).
However, this can equally well be done over $\cO_{F,(p)}$.       
The algorithms of Zassenhaus and Buchmann-Lenstra 
run in probabilistic polynomial time;  the probabilistic aspect comes from the need to 
factor polynomials over finite fields.  Using  Berlekamp's algorithm (\cite{Ber}) 
polynomials of degree $\ell$ in $\FF_q[x]$ can be factored in  
probabilistic polynomial time $O(\ell^3 \log(q)^3)$;  improvements have been given by 
Cantor-Zassenhaus (\cite{CZ}), Kaltofen-Shoup (\cite{KS}), and others.  

With suitable modifications, the procedure outlined above could be generalized to 
rational functions $\varphi(z)$  over arbitrary global fields, and should still run in probabilistic polynomial time.
This uses that  
polynomials over a global field can be factored in polynomial time, 
as shown by Pohst and Oman\~{a} (\cite{Pohst}, \cite{PohstOm}) 
and Belabas, van Hoeij, Kl\"uners and Steel (\cite{BvHKS}).

\smallskip
Now let $H_v$ be a local field. 
Suppose  $\varphi(z) \in H_v(z)$ has degree $d \ge 2$, and take $K = \CC_v$.  
Below, we give a ``steepest descent'' algorithm for finding an $H_v$-rational type II point 
where $\ordRes_\varphi(\cdot)$ takes its least value.  
Working within $H_v$, the algorithm finds the $H_v$-minimum for $\ordRes_\varphi(\cdot)$
and a $\gamma \in \GL_2(H_v)$ which achieves it.  
The algorithm also decides whether the $H_v$-minimum is the absolute minimum. 

The algorithm takes the path of steepest descent towards $\MinRes(\varphi)$, starting at $\zeta_G$. 
The path necessarily begins with a segment going ``upward'' from $\zeta_G$ towards $\infty$
(this segment may have length $0$) to some point $\zeta_{D(0,R)} = \zeta_{D(a,R)}$,
then goes ``downward'' from $\zeta_{D(a,R)}$ to a point $\zeta_{D(a,r)} \in \MinRes(\varphi)$.  
By Proposition \ref{SteepestDescentProp}, at any $H_v$-rational type II point outside $\MinRes(\varphi)$,
$\MinRes(\varphi)$ lies in a direction coming from the tree spanned by $\PP^1(H_v)$. 
Thus the path of steepest descent 
runs along the tree spanned by $\PP^1(H_v)$ until it either reaches a point
in $\MinRes(\varphi)$, or branches off that tree between two $H_v$-rational type II points,
one of which will minimize $\ordRes_\varphi(\cdot)$ on $H_v$-rational type II points.  
The algorithm steps between $H_v$-rational type II points and   
stops when an $H_v$-rational type II point minimizing $\ordRes_\varphi(\cdot)$ is reached. 

We will assume the residue field $k_v = \cO_v/\fM_v$ is isomorphic to $\FF_q$, and that $\ord(\cdot)$
is normalized so that $\ord(\pi)=1$ for a uniformizer $\pi$ for $\fM_v$.  
Given $a \in \cO_v$, we write $\abar = a \pmod{\fM_v} \in \FF_q$ for the residue class of $a$.

\medskip
{\bf Algorithm B:  Minimize $\ordRes_\varphi(\cdot)$ on $H_v$-rational type II points.}


\smallskip
Given a nonarchimedean local field $H_v$, and $\varphi(z) \in H_v(z)$ with $d = \deg(\varphi) \ge 2$:
  
\begin{enumerate}
\item[(1)] [Initialize.] 

    \begin{enumerate}
          \item Find a normalized representation $(F,G)$ for $\varphi$.
          
          \item Compute $R = \ord(\Res(F,G))$. 
         
          \item Set $\gamma = \left[ \begin{array}{cc} 1 & 0 \\ 0 & 1 \end{array} \right]$.  
    
          \item Fix an element $\pi \in H_v$ with $\ord(\pi) = 1$.  
    \end{enumerate} 

\item[(2)] [First, go up from $\zeta_G$ towards $\infty$.] 

    \begin{enumerate}
          \item[(a)] [See if $\ordRes_\varphi(\cdot)$ is locally decreasing in the direction $\vv_\infty$.] 
          
             Write $F(X,Y) = a_d X^d + a_{d-1}X^{d-1}Y + \cdots + a_0 Y^d$,  
          
                   \qquad \quad \ \ $G(X,Y) = b_d X^d + b_{d-1}X^{d-1}Y + \cdots + b_0 Y^d$,
                   
          \qquad then using the criterion from Lemma  \ref{CriterionLemma}, 
       
          \qquad \quad test whether \quad 
             $\left\{ \begin{array}{ll} \abar_\ell = 0 & \text{for $\frac{d+1}{2} \le \ell \le d$, } \\
                                   \bbar_\ell = 0 & \text{for $\frac{d-1}{2} \le \ell \le d$.}
                      \end{array} \right.$  
           
                       \quad If not, $\ordRes_\varphi(\cdot)$ 
                           is not decreasing in the direction $\vv_\infty$; go to (3). 
               
                      \quad If so, $\vv_\infty$ is the unique direction in which
							$\ordRes_\varphi(\cdot)$ is decreasing; 

                       \qquad \qquad continue on to (3b).

          \item [(b)] [Compute how far to go up.]
          
                         \quad For $\ell = 0, \ldots, d$, put $C_\ell = R - 2d \, \ord(a_\ell)$, 
                                                           $D_\ell = R - 2d \, \ord(b_\ell)$, 

                         \qquad then minimize the piecewise affine function 

                       $\displaystyle{ \chi(t) = \max\big( \max_{0 \le \ell \le d}( C_\ell + (d^2+d - 2d \ell)t), 
                         \max_{0 \le \ell \le d} (D_\ell + (d^2+d - 2d (\ell+1)t) \big) }$. 
                         
                         \quad Let $R_{\new}$ be the minimum value of $\chi(t)$, 

                          \qquad and let $[M,N]$ be the subset of $\RR$ on which it is attained 
                                      
                          \qquad \quad (necessarily $N < 0$; possibly $M = N$).

          \item [(c)] [Test the nature of the minimum.]

                $(\rm{i})$ If $M = N \in \ZZ$, the new minimum is at an $H_v$-rational type II point: 
                
                    \qquad \qquad \quad  take a step up to that point.
                
                     \qquad \quad Put $\eta = \left[ \begin{array}{cc} 1 & 0 \\ 0 & \pi^{-N} \end{array} \right]$  
                           and find a normalized representation for $(F^\eta,G^\eta)$: 
             
             \quad \ Let $F_*(X,Y) = \pi^{-N} a_d X^d + \pi^{-2N} a_{d-1}X^{d-1}Y + \cdots + \pi^{-(d+1)N} a_0 Y^d$,  
          
                   \qquad \quad \ \ $G_*(X,Y) = b_d X^d + \pi^{-N} b_{d-1}X^{d-1}Y + \cdots + \pi^{-d N} b_0 Y^d$,
                   
             \qquad \ \ then normalize $(F_*,G_*) \rightarrow (F_{\new},G_{\new})$,

             \qquad \qquad  update $F \leftarrow F_{\new}$, $G \leftarrow G_{\new}$, $R \leftarrow R_{\new}$,
                     $\gamma \leftarrow \eta$, 
                           and go to (3).

                $(\rm{ii})$ If $M = N \notin \ZZ$, or if $[M,N]$ is an interval which contains no integers, 

                  \qquad \qquad the $H_v$-minimum for $\ordRes_\varphi(\cdot)$ is not the absolute minimum: 
                
               \qquad \ \ let $m$, $n$ be the two integers bracketing $[M,N]$, 
               
               \qquad \quad put $R = \min(\chi(m), \chi(n))$  and let $k \in \{m,n\}$ be a point where 

               \qquad \qquad the minimum is attained;  
                       put $\gamma = \left[ \begin{array}{cc }1 & 0 \\ 0 & \pi^{-k} \end{array} \right]$,  
                                      and  go to (4a).
               
                $(\rm{iii})$ If $[M,N]$ is an interval of positive length containing an integer $k$, 
                
                    \qquad \qquad the  new minimum for $\ordRes_\varphi(\cdot)$ is the absolute minimum: 
                
               \qquad \qquad \quad  put $R = R_{\new}$,     
                          put $\gamma = \left[ \begin{array}{cc} 1 & 0 \\ 0 & \pi^{-k} \end{array} \right]$, 
                                        and  go to (4b).

    \end{enumerate} 

\item[(3)] [Iterate steps down, until the $H_v$-minimum is reached.]

     \begin{enumerate} 
        \item[(a)] [Limit the possible directions towards $\MinRes(\varphi)$.]
        
                    Put $g(z) = G(z,1)$ and $h(z) = F(z,1)-zG(z,1)$, 
                         
                      \quad \qquad then find the common roots of $\gbar(z)$ and $\hbar(z)$  
                                     belonging to $\FF_q$.                 
                    
                    \quad If there are no such roots, the current $R$ is minimal:  go to (4b).
                    
                    \quad If there are common roots, list them as $\{\betabar_1, \ldots, \betabar_k\}$, 
                          and continue to (3b).
        
        \item[(b)] [Find the direction of steepest descent.]
        
                   For each $i = 1, \ldots, k$, do the following:

          \begin{enumerate} 
                
               \item[(i)] Let $\beta_i \in \cO_v$ be a lift of $\betabar_i$; 
                    change coordinates by $\left[ \begin{array}{cc} 1 & \beta_i \\ 0 & 1 \end{array} \right]$,
 
                   \qquad putting $F_*(X,Y) = F(X + \beta_i Y, Y) - \beta_i G(X + \beta_i Y, Y)$, 
  
                   \qquad \qquad \qquad  $G_*(X,Y) =  G(X + \beta_i Y, Y)$.           
                         
            \item[(ii)] Write $F_*(X,Y) = a_d X^d + a_{d-1}X^{d-1}Y + \cdots + a_0 Y^d$,  
          
                   \qquad \quad \ \ $G_*(X,Y) = b_d X^d + b_{d-1}X^{d-1}Y + \cdots + b_0 Y^d$,
                   
                    \qquad then using the criterion from Lemma  \ref{CriterionLemma}, 
       
          \qquad \quad  test whether \quad 
             $\left\{ \begin{array}{ll} \abar_\ell = 0 & \text{for $0 \le \ell \le \frac{d+1}{2}$, } \\
                                   \bbar_\ell = 0 & \text{for $0 \le \ell \le \frac{d-1}{2}$.}
                      \end{array} \right.$

                      \qquad If so, $\vv_{\beta_i}$ is the unique direction in which
							$\ordRes_\varphi(\cdot)$ is decreasing; 

                       \qquad \qquad exit the loop on $i$, and go to (3c).

                      \qquad If not, continue the loop and take the next value of $i$.
                                        
          \end{enumerate}  
          
          \qquad If there are no more values of $i$,  the current $R$ is minimal;  go to (4b).

   \item [(c)] [Compute how far to go down.]
          
                         \quad For $\ell = 0, \ldots, d$, put $C_\ell = R - 2d \, \ord(a_\ell)$, 
                                                           $D_\ell = R - 2d \, \ord(b_\ell)$, 

                         \qquad then minimize the piecewise affine function 

                       $\displaystyle{ \chi(t) = \max\big( \max_{0 \le \ell \le d}( C_\ell + (d^2+d - 2d \ell)t), 
                         \max_{0 \le \ell \le d} (D_\ell + (d^2+d - 2d (\ell+1)t) \big) }$. 
                         
                         \quad Let $R_{\new}$ be the minimum value of $f(t)$, 

                          \qquad and let $[M,N]$ be the subset of $\RR$ on which it is attained 
                                      
                          \qquad \quad (necessarily $M > 0$; possibly $M = N$).

          \item [(d)] [Test the nature of the minimum.]

                $(\rm{i})$ If $M = N \in \ZZ$,  the new minimum is at an $H_v$-rational type II point: 
                
                           \qquad \qquad \quad  take a step down to that point.
                
                   \qquad \quad   Put $\eta = \left[ \begin{array}{cc} \pi^N & \beta_i \\ 0 & 1 \end{array} \right]$
                                                 and find a normalized representation for $(F^\eta,G^\eta)$: 
             
             \quad \ \ Let $F_{**}(X,Y) = F_*(\pi^N X, Y)$,  $G_{**}(X,Y) =  \pi^N G_*(\pi^N X,Y)$,
                   
             \qquad \ \ then normalize $(F_{**},G_{**}) \rightarrow (F_{\new},G_{\new})$, 
                       
             \qquad \qquad  update $F \leftarrow F_{\new}$, $G \leftarrow G_{\new}$, $R \leftarrow R_{\new}$,
                     $\gamma \leftarrow \gamma \cdot \eta$, 
                           and go to (3).

                $(\rm{ii})$ If $M = N \notin \ZZ$, or if $[M,N]$ is an interval which contains no integers,  
                
                \qquad \qquad the $H_v$-minimum for $\ordRes_\varphi(\cdot)$ is not the absolute minimum: 
                
               \qquad \ \ let $m$, $n$ be the two integers bracketing $[M,N]$, 
               
               \qquad \quad put $R = \min(\chi(m), \chi(n))$  and let $k \in \{m,n\}$ be a point where 

               \qquad \qquad the minimum is attained;  
                  put $\gamma = \gamma \cdot \left[ \begin{array}{cc} \pi^k & \beta_i \\ 0 & 1 \end{array} \right]$,  
                                     and  go to (4a).
               
                $(\rm{iii})$ If $[M,N]$ is an interval of positive length containing an integer $k$,

                \qquad \qquad the new minimum for $\ordRes_\varphi(\cdot)$ is the absolute minimum:   
                
               \qquad \qquad \quad  put $R = R_{\new}$,    
             put $\gamma = \gamma \cdot \left[ \begin{array}{cc} \pi^k & \beta_i \\ 0 & 1 \end{array} \right]$,
                                    and go to (4b).          
          
       \end{enumerate}  

\item[(4)] [Output whether the $H_v$-minimum for $\ordRes_\varphi(\cdot)$ is the absolute minimum.]

           (a)  Output ``The $H_v$-minimum for $\ordRes_\varphi(\cdot)$ is not the absolute minimum'', 
               
               \qquad \qquad  and  go to (5).

           (b) Output ``The $H_v$-minimum for $\ordRes_\varphi(\cdot)$ is the absolute minimum'', 
               
               \qquad \qquad  and  continue on to (5).          
          
\item[(5)] [Output $R$ and $\gamma$, and halt.]  

        Output ``The minimal value of $\ordRes_\varphi(\cdot)$ on $H_v$-rational type II points  $=  R$''; 
        
        Output ``$\gamma = \scriptstyle{ \left[ \begin{array}{cc} a & b \\ 0 & d \end{array} \right] }$ 
                   is a matrix for which $\ordRes(\varphi^\gamma) = R$''.  Halt.

\end{enumerate}

The correctness of the algorithm and the fact that it terminates follow 
from Theorem \ref{MainThm}, Lemma \ref{CriterionLemma} 
and Proposition \ref{SteepestDescentProp}, but perhaps some remarks are in order. 

\smallskip 
After each step to a new $H_v$-rational type II point, 
the algorithm changes coordinates to bring that point back to $\zeta_G$.  
The corresponding coordinate change matrices are affine, 
so they preserve the direction $\vv_\infty$.  This means that in Lemma \ref{CriterionLemma} 
we can use the tangent directions $\vv_\infty$, $\vv_\beta$ at $\zeta_G$, rather than the 
tangent directions $\vv_{Q,\eta(\infty)}$, $\vv_{Q,\eta(\beta)}$ at $Q = \eta(\zeta_G)$.

If the path of steepest descent 
branches off the tree spanned by $\PP^1(H_v)$, when the algorithm moves between the two 
$H_v$-rational type II points adjacent to $\MinRes(\varphi)$, $\ordRes_{\varphi}(\cdot)$
will initially decrease, then increase.  The stopping criteria in Steps (2c), (3a), (3b) and (3d)
assure that a point where the minimum is taken is chosen.

In Step (3a), it cannot be that both $\gbar(z) \equiv 0$ and $\hbar(z) \equiv 0$, 
as in that case $\fbar(z) = \hbar(z) + z \gbar(z) \equiv 0$.  Since the coefficients
of $f$ and $g$ are the same as those of $F$ and $G$ respectively, this would mean  
$(F,G)$ was not normalized, contrary to its construction.  
To motivate Step (3a), note that in Step (3b), $g(\beta_i)$ is the coefficient of $Y^d$ in $G_*(X,Y)$
and $h(\beta_i)$ is the coefficient of $Y^d$ in $F_*(X,Y)$.  
If $\overline{g(\beta_i)} \ne 0$, then in Step (3b) the coefficient $\bbar_0$ 
would be nonzero, and the test in Step (3b) would fail.  Likewise, if $\overline{h(\beta_i)} \ne 0$,
then $\abar_0 \ne 0$, and again the test would fail.

In Step (3c), the matrix 
$\eta = \left[ \begin{array}{cc} \pi^N & \beta_i \\ 0 & 1 \end{array} \right]$ 
makes the step from $\zeta_G$ to the $H_v$-rational type II point $\zeta_{D(\beta_i,|\pi|^N)}$.  
This coordinate change is realized as the composite of two partial steps, using    
$\eta = \left[ \begin{array}{cc} 1 & \beta_i \\ 0 & 1 \end{array} \right]
\cdot \left[ \begin{array}{cc} \pi^N & 0 \\ 0 & 1 \end{array} \right]$.  

\smallskip
Algorithm B is content with finding one point where the $H_v$-minimum is attained.  
By incorporating additional tests and an extra search  
based on the criteria in Lemma \ref{CriterionLemma}(B), it could 
easily be modified to find all $H_v$-rational type II points where the $H_v$-minimum was attained.  
We leave this modification to the reader.

\smallskip
In implementing Algorithm B it is not necessary to work in a local field. If $\varphi(z)$ is defined over 
a number field $H$, and $\ord_v(\cdot)$ is a nonarchimedean valuation of $H$ (specified, for example, 
by giving a $p$-maximal order $\cO_{H,p} \subset \cO_H$ and a prime ideal $\fp_v$ of $\cO_{H,p}$ above $p$),
one could carry out the algorithm using computations in $H$ using ideas similar to those discussed 
in Algorithm A.   

We will now discuss its running time when $H = \QQ$ and $v = p$ is a rational prime.  
For simplicity, assume that $\varphi(x) = f_0(x)/g_0(x)$ is the quotient of relatively prime
polynomials $f_0(x), g_0(x) \in \ZZ[x]$, where the coefficients of $f_0$ and $g_0$ have absolute
value at most $B$.  Let $(F_0,G_0)$ be the initial normalized representation of $\varphi$ from Step (1a), 
and let $R_0 = \ord_p(\Res(F_0,G_0))$ be the ord-value of its resultant, computed in Step (1b).  
The Hadamard bound for the archimedean size of $\Res(F_0,G_0)$ is $(d+1)^d B^{2d}$, so
\begin{equation*}
R_0  \ \le \ d \log_p(d+1) + 2d \log_p(B) \ .
\end{equation*} 
Each time Step 2 or Step 3 is executed, the distance from $\zeta_G$ to the $\QQ_p$-rational type II point 
being considered increases by at least $1$, so by Theorem \ref{MainThm}, 
the algorithm terminates after at most $\frac{2}{d-1} R_0$ passes through Steps 2 and 3.  
At all intermediate stages, the coefficients of $F$ and $G$ remain in $\ZZ$;
by Theorem \ref{StabilityThm}, it suffices to compute them
modulo $p^{4 R_0}$, and as the algorithm proceeds, the required precision decreases.  
Step (3a) limits the number of residue classes considered in Step (3b) to at most $d+1$;
using Berlekamp's algorithm Step (3a) can be carried out in $\cO( d^3 \log(p)^3)$ bit operations.   
From these considerations one sees that Algorithm B runs in probabilistic polynomial time.

\section{The case $d = 1$} \label{d=1Section}

For completeness, in this section we consider $\ordRes_\varphi(\cdot)$ when $d = 1$, 
that is, when $\varphi(z) =  \frac{f_1z+f_0}{g_1z+g_0} \in K(z)$ with $f_1 g_0- f_0 g_1 \ne 0$.
It is no longer true that $\MinRes(\varphi)$ is a point or a segment of finite path-length:  
the reason for the difference is the simple fact that $1^2-1 = 0$, whereas $d^2 - d > 0$ when $d \ge 2$.  

As is well known, there are three cases to consider:
\begin{enumerate}
\item $\varphi(z) = z$;

\item $\varphi(z)$ has (exactly) two distinct fixed points, in which case there 
are a $\gamma \in \GL_2(K)$ and a $C \in K^{\times}$
with $|\,C| \le 1$ and $C \ne 1$ such that $\varphi^{\gamma}(z) = Cz$;

\item $\varphi(z)$ has a single fixed point of multiplicity $2$, in which case there are a  
$\gamma \in \GL_2(K)$ and a $0 \ne C \in K$ such that $\varphi^{\gamma}(z) = z + C$.
\end{enumerate}  
By standard computations in linear algebra, 
it is easy to distinguish between the cases, 
and to find a $\gamma$ which carries out the desired conjugacy:  
the second case occurs when the Jordan form of the matrix corresponding to $\varphi$ is 
$\left[ \begin{array}{cc} \lambda & 0 \\ 0 & \mu \end{array} \right]$ with $\lambda \ne \mu$, 
and the eigenvalues are ordered so that $|\lambda| \le |\mu|$;
the third case when it is $\left[ \begin{array}{cc} \lambda & 1 \\ 0 & \lambda \end{array} \right]$.
In the second case $C = \lambda/\mu$, in the third case $C = 1/\lambda$.
If $\varphi$ and the eigenvalues are rational over a subfield $H \subset K$, 
then $\gamma$ can be chosen to 
belong to $\GL_2(H)$. 

We will need some terminology.  Given points $x_0 \ne x_1 \in \PP^1(K)$, the {\em strong tube of radius $R$
around the path $[x_0,x_1]$} is the set
\begin{equation*}
T_{[x_0,x_1]}(R) \ = \ [x_0,x_1] \cup \{ z \in \HH_\Berk : \text{$\rho(z,x) \le R$ for some $x \in [x_0,x_1]$} \} \ .
\end{equation*}
If $z \in \PP^1_{\Berk}$  corresponds to a sequence of nested discs $\{D(a_i,r_i)\}_{i \ge 1}$
by Berkovich's classification theorem (see \cite{B-R}, p.5), 
we define $\diam_\infty(z) =  \lim_{i \rightarrow \infty} r_i$;  we put $\diam_\infty(\infty) = \infty$.   
The {\em horodisc of codiameter $R$, tangent to the point $\infty$}, is the set 
\begin{equation*}
H_{\infty}(R) \ = \ \{ \ z \in \PP^1_{\Berk} : \diam_{\infty}(z) \ge R \ \} \ .
\end{equation*}
The only type I point belonging to $H_{\infty}(R)$ is $\infty$;  
a point $\zeta_{D(a,r)}$ of type II or  III belongs to $H_{\infty}(R)$ if and only if $r \ge R$.  
For each $a \in K$, the intersection of the path $[a,\infty]$ with $H_{\infty}(R)$ 
is the ray $[\zeta_{D(a,R)},\infty]$. 
For each $S > R$, the point $\zeta_{D(0,S)}$ belongs to $H_{\infty}(R)$;  
if $a \in K$ and $|a| \le S$, the intersection of 
$[a,\zeta_{D(0,S)}]$ with $H_\infty(R)$ is 
\begin{equation*}
\{\ z \in [a,\zeta_{D(0,S)}] : \rho(\zeta_{D(0,S)},z) \le \log(S/R) \ \} \ .
\end{equation*} 
Thus $H_\infty(R)$ can be described informally as ``the set of points in $\PP^1_{\Berk}$ accessible
by moving the ray $[\zeta_{D(0,R)}, \infty]$ without stretching, keeping it anchored at $\infty$''.
For an arbitrary $x_0 \in \PP^1(K)$, a {\em horodisc tangent to $x_0$} is a set of the form
$\gamma(H_\infty(R))$ for some $R$, where $\gamma \in \GL_2(K)$ is such that $\gamma(\infty) = x_0$.

\begin{theorem} \label{d=1Thm}  Suppose $\varphi(z) \in K(z)$ has degree $d = 1$.  
The function $\ordRes_{\varphi}(\cdot)$ on type {\rm II} points 
extends to a function $\ordRes_{\varphi} :\PP^1_{\Berk} \rightarrow [0,\infty]$
which is piecewise affine and convex upwards on each path in $\PP^1_{\Berk}$,
with respect to the logarithmic path distance.
It is finite and continuous on $\HH_{\Berk}$ with respect to the strong topology, 
and achieves its minimum on a nonempty set $\MinRes(\varphi) \subset \PP^1_{\Berk}$. Furthermore     

$(1)$ If $\varphi(z) = z$, then $\ordRes_{\varphi}(\cdot) \equiv 0$
 and $\MinRes(\varphi) = \PP^1_{\Berk}$.

$(2)$ If $\varphi(z)$ has exactly two fixed points $x_0, x_1$, 
let $\gamma \in \GL_2(K)$ and $C \in K^{\times}$ with $|\,C| \le 1$, $C \ne 1$,
be such that $\varphi^{\gamma}(z) = Cz$.
The minimal value of $\ordRes_{\varphi}(\cdot)$ is $\ord(C)$,
and $\varphi$ has potential good reduction if and only if $|\,C| = 1$.  
When $|\,C| < 1$, or when $|\,C| = 1$ and $|\,C-1| = 1$, 
then $\MinRes(\varphi)$ is the path $[x_0,x_1]$.
When $|\,C| = 1$ and $|\,C-1| < 1$, put $R = \ord(C-1) > 0;$  
then $\MinRes(\varphi)$ is the strong tube $T_{[x_0,x_1]}(R)$.
The function $\ordRes_{\varphi}(\cdot)$ 
takes the value $\infty$ at each point of \,$\PP^1(K) \backslash \{x_0,x_1\}$,
and is continuous on $\PP^1_{\Berk} \backslash \{x_0,x_1\}$ relative to the strong topology.

$(3)$ If $\varphi(z)$ has one fixed point $x_0$, 
let $\gamma \in \GL_2(K)$ and $0 \ne C \in K$ be such that $\varphi^{\gamma}(z) = z + C$.
Then the minimal value of $\ordRes_{\varphi}(\cdot)$ is \,$0$
and $\varphi$ has potential good reduction.
Put $R = |\,C|$.  Then $\MinRes(\varphi)$ is the horodisc tangent to $x_0$ given by $\gamma(H_\infty(R))$.  
The function $\ordRes_{\varphi}(\cdot)$ takes the value $\infty$ 
at each point of \,$\PP^1(K) \backslash \{x_0\}$,
and is continuous on $\PP^1_{\Berk} \backslash \{x_0\}$ relative to the strong topology.
\end{theorem}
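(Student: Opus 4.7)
The plan is to reduce to the three canonical forms and analyze each by a direct computation with an upper-triangular conjugating matrix. Since $\GL_2(K)$ acts on $\PP^1_\Berk$ by $\rho$-isometries sending paths, strong tubes, and horodiscs to sets of the same kinds, and since $\MinRes(\varphi) = \gamma(\MinRes(\varphi^\gamma))$ by (\ref{Equicontinuity}), it is enough to prove the theorem when $\varphi(z)$ is one of $z$, $Cz$, or $z + C$. The existence of the continuous extension of $\ordRes_\varphi$ to $\PP^1_\Berk$, convexity and piecewise affinity along paths, and the slope constraint $m \in \{-2, 0, 2\}$ with at most two affine pieces per path all come from Proposition \ref{ExtensionCor}, which was formulated for $d \ge 1$. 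Case (1) is immediate: $\varphi(z) = z$ is fixed by every conjugation, so $\ordRes_\varphi \equiv 0$ on all of $\PP^1_\Berk$.

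For Case (2), every type II point can be written as $Q = \zeta_{D(\beta, r)} = \gamma(\zeta_G)$ where $\gamma = \left[\begin{array}{cc} A & \beta \\ 0 & 1 \end{array}\right]$ with $|A| = r$. A direct computation gives $\varphi^\gamma(z) = Cz + (C-1)\beta/A$, with homogeneous representation $(F^\gamma, G^\gamma) = (CX + ((C-1)\beta/A)Y,\ Y)$ and $\Res(F^\gamma, G^\gamma) = C$. Applying (\ref{KeyFormula}) with $d = 1$ yields
\begin{equation*}
\ordRes_\varphi(Q) \;=\; \ord(C) - 2\min\bigl(0,\ \ord((C-1)\beta) - \ord(A)\bigr),
\end{equation*}
which equals $\ord(C)$ exactly when $r \ge |C-1|\cdot|\beta|$ and exceeds it by $2(\ord(A) - \ord((C-1)\beta))$ otherwise. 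Points on the path $[0,\infty]$ correspond to $r \ge |\beta|$, while those off the path lie on the branch at $\zeta_{D(0,|\beta|)}$ toward $\beta$. So when $|C-1| = 1$ the minimum is attained exactly on $[0,\infty]$, and when $|C-1| < 1$ (which forces $|C| = 1$, hence $\ord(C) = 0$ and potential good reduction) it is attained on the tube $T_{[0,\infty]}(\ord(C-1))$. The slope-$2$ increase along each branch forces $\ordRes_\varphi = \infty$ at every type I point other than $0$ and $\infty$, while continuity along $[0,\infty]$ gives $\ordRes_\varphi(0) = \ordRes_\varphi(\infty) = \ord(C)$.

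Case (3) runs on the same principle. With the same $\gamma$ one finds $\varphi^\gamma(z) = z + C/A$, $(F^\gamma, G^\gamma) = (X + (C/A)Y,\ Y)$, and $\Res(F^\gamma, G^\gamma) = 1$, giving
\begin{equation*}
\ordRes_\varphi(Q) \;=\; \begin{cases} 0 & \text{if } r \ge |C|, \\ 2(\ord(A) - \ord(C)) & \text{if } r < |C|. \end{cases}
\end{equation*}
The condition $r \ge |C|$ is independent of $\beta$ and cuts out exactly the type II points of $H_\infty(|C|)$, so $\MinRes(\varphi) = H_\infty(|C|)$ after continuous extension to types III and IV. The slope-$2$ affine piece on $[\alpha, \zeta_{D(\alpha,|C|)}]$ for each $\alpha \in K$ forces $\ordRes_\varphi(\alpha) = \infty$, while continuity makes $\ordRes_\varphi(\infty) = 0$; since the minimum is $0$, $\varphi$ has good reduction throughout $H_\infty(|C|)$.

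The main thing to check carefully will be the boundary case $r = |C-1|\cdot|\beta|$ in Case (2), to confirm that the minimal locus is a closed tube rather than an open one, and the verification that the upper-triangular representative $\gamma$ is an adequate parametrization of type II points (so that no cosets of the stabilizer $K^\times\cdot \GL_2(\cO)$ are missed). Both are routine, so the only real work is the direct algebraic computation above.
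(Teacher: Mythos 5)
Your proposal is correct and follows essentially the same route as the paper: reduce to the canonical forms $z$, $Cz$, $z+C$ via (\ref{Equicontinuity}), parametrize type II points by upper-triangular matrices $\left[\begin{smallmatrix} A & \beta \\ 0 & 1\end{smallmatrix}\right]$, and read off $\ordRes_\varphi$ from (\ref{KeyFormula}); your expression $\ord(C)-2\min(0,\ord((C-1)\beta)-\ord(A))$ is exactly the paper's formula (\ref{CaseCz}), and your case (3) computation matches (\ref{Casez+C}). The only (cosmetic) difference is that you merge the paper's subcases $|C|<1$ and $|C|=1,\,|C-1|=1$ into the single condition $|C-1|=1$, which is harmless.
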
  

\begin{proof}
The fact that $\ordRes_{\varphi}(\cdot)$ extends from type {\rm II} points 
to a function $\ordRes_{\varphi} :\PP^1_{\Berk} \rightarrow [0,\infty]$
which is piecewise affine and convex upwards on each path in $\PP^1_{\Berk}$ 
with respect to the logarithmic path distance,
and is finite and continuous on $\HH_{\Berk}$ with respect to the strong topology, 
follows by the same argument as in the proof Theorem \ref{MainThm}.  
Indeed, $\ordRes_{\varphi}(\cdot)$ is Lipschitz continuous on $\HH_\Berk$, 
with Lipschitz constant $1^2 + 1 = 2$.
To prove the remaining assertions, we will make explicit computations in each case.

When $\varphi(z) = z$, it is easy to see that $\varphi^{\gamma}(z) = z$ for each $\gamma \in \GL_2(K)$,
and the assertions in part (1) of the Theorem follow trivially.

Next assume $\varphi$ has exactly two distinct fixed points $x_0, x_1$, 
and let $\gamma \in \GL_2(K)$ be such that $\varphi^{\gamma}(z) = C z$ with $|\,C| \le 1$, $C \ne 1$. 
After relabeling $x_0, x_1$ if necessary,
we can assume that $\gamma(0) = x_0$ and $\gamma(\infty) = x_1$.  
Given $A \in K^{\times}$ and $B \in K$, 
put $\tau = \tau_{A,B} = \left[ \begin{array}{cc} A & B \\ 0 & 1 \end{array} \right]$.
As $A$ and $B$ vary, the points $\zeta_{D(B,|A|)} = \tau_{A,B}(\zeta_G)$ range over all type II 
points in $\HH_\Berk$.  Consider the representation 
$(F^\gamma(X,Y),G^\gamma(X,Y) = (CX,Y)$ for $\varphi^{\gamma}$.  
One sees easily that $\ordRes(\varphi^\gamma) = \ord(C)$ 
and $(F^{\gamma \tau},G^{\gamma \tau}) = (AC X + B(C-1)Y, AY)$, which gives 
\begin{eqnarray}
& & \ordRes_{\varphi^\gamma}(\zeta_{D(B,|A|)}) \notag \\
& & \qquad \qquad \ = \ \max\big(\ord(C), \ord(C) - 2 \ord(B) - 2 \ord(C-1) + 2 \ord(A) \big) \ .
        \label{CaseCz} 
\end{eqnarray} 

When $|\,C| < 1$, or when $|\,C| = |\,C-1| = 1$, formula (\ref{CaseCz}) simplifies to 
\begin{equation*}
\ordRes_{\varphi^\gamma}(\zeta_{D(B,|A|)}) \ = \ 
          \max\big(\ord(C), \ord(C) + 2 \,\ord(A)- 2\, \ord(B) \big) \ .
\end{equation*} 
When $B = 0$, then $\ordRes_{\varphi^\gamma}(\zeta_{D(0,|A|)}) = \ord(C)$ for all $A$,
so  $\ordRes_{\varphi^{\gamma}}(\cdot) \equiv \ord(C)$ on the path $[0,\infty]$.
Next suppose $B \ne 0$.  The path $[B,\infty]$ meets $[0,\infty]$ at $\zeta_{D(0,|B|)}$, 
and for $|A| \le |B|$ we see that 
$\ordRes_{\varphi^\gamma}(\zeta_{D(B,|A|)}) = \ord(C) - 2 \ord(A/B) > \ord(C)$.  
Thus $\ordRes_{\varphi}(\cdot)$ increases as one moves away from $[0,\infty]$,
and $\ordRes_{\varphi^{\gamma}}(B) = \infty$.    
It follows that $\MinRes(\varphi^{\gamma}) = [0,\infty]$ and that $\ordRes_{\varphi^\gamma}(x) = \infty$
for all $x \in \PP^1(K) \backslash \{0,\infty\}$.  By Proposition \ref{ExtensionCor}, 
$\ordRes_{\varphi^{\gamma}}(\cdot)$ is continuous on $\PP^1_{\Berk} \backslash \{0,\infty\}$ 
relative to the strong topology.

When $|\,C-1| < 1$, formula (\ref{CaseCz}) becomes  
\begin{equation*}
\ordRes_{\varphi^\gamma}(\zeta_{D(B,|A|)}) \ = \ 
          \max\big(0, -2\,\ord(C-1) + 2\, \ord(A) - 2 \,\ord(B) \big) \ .
\end{equation*} 
When $B = 0$, then $\ordRes_{\varphi^\gamma}(\zeta_{D(0,|A|)}) = 0$ for all $A$,
so  $\ordRes_{\varphi^{\gamma}}(\cdot) = 0$ on $[0,\infty]$.
When $B \ne 0$,  for $|A| \le |B|$ we see that 
$\ordRes_{\varphi^\gamma}(\zeta_{D(B,|A|)}) = 0$ if $\ord(A/B) \le \ord(C-1)$,
while $\ordRes_{\varphi^\gamma}(\zeta_{D(B,|A|)}) = -2\ord(C-1) + 2 \ord(A/B) > 0$ 
if $\ord(A/B) > \ord(C-1)$.  Putting $R = \ord(C-1)$ we see that   
$\MinRes(\varphi^{\gamma})$ is the strong tube $T_{[0,\infty]}(R)$
and that $\ordRes_{\varphi^\gamma}(x) = \infty$
for all $x \in \PP^1(K) \backslash \{0,\infty\}$.  By Proposition \ref{ExtensionCor}, 
$\ordRes_{\varphi^{\gamma}}(\cdot)$ is continuous on $\PP^1_{\Berk} \backslash \{0,\infty\}$ 
relative to the strong topology. 
Transferring these assertions back to $\varphi$ using formula (\ref{Equicontinuity}), 
we obtain part (2) of the Theorem.

Finally suppose $\varphi$ has exactly one fixed point $x_0$. 
Let $\gamma \in \GL_2(K)$ be such that $\varphi^{\gamma}(z) = z + C$ with $C \ne 0$; 
then $\gamma(\infty) = x_0$.  
Given $A \in K^{\times}$ and $B \in K$, 
let $\tau = \tau_{A,B}$ be as above.
Consider the representation 
$(F^\gamma(X,Y),G^\gamma(X,Y) = (X + CY,Y)$ for $\varphi^{\gamma}$.  
Then $\ordRes(\varphi^\gamma) = 0$ 
and $(F^{\gamma \tau},G^{\gamma \tau}) = (AX + C Y, AY)$, which gives 
\begin{equation}
\ordRes_{\varphi^\gamma}(\zeta_{D(B,|A|)}) 
\ = \ \max\big(0, 2 \, \ord(A/C) \big) \ . \label{Casez+C}      
\end{equation} 
Put $R = |C|$.  For each $B \in K$, formula (\ref{Casez+C}) shows that 
$\ordRes_{\varphi^\gamma}(\zeta_{D(B,|A|)}) = 0$ if $|A| \ge |C|$, 
while $\ordRes_{\varphi^\gamma}(\zeta_{D(B,|A|)}) = 2 \,\ord(A/C) > 0$ if $|A| < |C|$.
Thus $\MinRes(\varphi^{\gamma})$ is the horodisc $H_\infty(R)$,
and  $\ordRes_{\varphi^\gamma}(x) = \infty$
for all $x \in \PP^1(K) \backslash \{\infty\}$.  By Proposition \ref{ExtensionCor}, 
$\ordRes_{\varphi^{\gamma}}(\cdot)$ is continuous on $\PP^1_{\Berk} \backslash \{\infty\}$ 
relative to the strong topology. 
Transferring these assertions back to $\varphi$ using formula (\ref{Equicontinuity}), 
we obtain part (3) of the Theorem.
\end{proof}

\newpage

\end{document}